\setlist{labelindent=\parindent,leftmargin=*}
\numberwithin{equation}{section}
\theoremstyle{definition}
\newtheorem{rem}{Remark}[section]
\theoremstyle{definition}
\newtheorem{defi}[rem]{Definition}
\theoremstyle{plain}
\newtheorem{theo}[rem]{Theorem}
\newtheorem{prop}[rem]{Proposition}
\newtheorem{lemma}[rem]{Lemma}
\newtheorem{cor}[rem]{Corollary}
\newcommand\1{\mathds 1}
\newcommand\C{\mathbb C}
\newcommand\N{\mathbb N}
\newcommand\R{\mathbb R}
\newcommand\shd{{\mathcal D}}
\newcommand\shf{{\mathcal F}}
\newcommand\shg{{\mathcal G}}
\newcommand\shm{{\mathcal M}}
\newcommand\shn{{\mathcal N}}
\newcommand\shs{{\mathcal S}}
\newcommand{\rz}[1]{\mathbb{#1}}
\DeclareMathOperator\MP{MP}
\title{Elliptic PDEs with distributional drift and  backward SDEs
driven by a c\`adl\`ag martingale with random terminal time}
\author{Francesco Russo}
\affil{Ecole Nationale Sup\'erieure des Techniques Avanc\'ees\\ ENSTA ParisTech\\Unit\'e de Math\'ematiques appliqu\'ees}
\author{Lukas Wurzer}
\affil{University of Innsbruck\\ Institute for Basic Sciences in Engineering Sciences\\Unit for Engineering Mathematics}
\date{June 2nd, 2015}
\begin{document}
\maketitle

\begin{abstract}
  We introduce a generalized notion of semilinear elliptic partial
  differential equations where the corresponding second order partial
  differential operator $L$ has a generalized drift. We investigate existence
  and uniqueness of generalized solutions of class $C^1$.  The generator $L$
  is associated with a Markov process $X$ which is the solution of a
  stochastic differential equation with distributional drift. If the
  semilinear PDE admits boundary conditions, its solution is naturally
  associated with a backward stochastic differential equation (BSDE) with
  random terminal time, where the forward process is $X$.  Since $X$ is a weak
  solution of the forward SDE, the BSDE appears naturally to be driven by a
  martingale.  In the paper we also discuss the uniqueness of solutions of a
  BSDE with random terminal time when the driving process is a general
  c\`adl\`ag martingale.
\end{abstract}

{\bfseries KEY WORDS AND PHRASES}: Backward stochastic differential equations;
random terminal time; martingale problem; distributional drift; elliptic
partial differential equations.

{\bfseries MSC 2010:} 60H10; 60H30; 35H99.

\section{Introduction}

The paper involves three essential areas of study.
\begin{enumerate}
\item Elliptic semilinear PDEs with distributional drift.
\item Backward stochastic differential equations (BSDEs) driven by c\`adl\`ag
  martingales with terminal condition at random terminal time.
\item The representation of solutions of the above mentioned BSDEs through
  solutions of PDEs.
\end{enumerate}
We consider a
% to perform a probabilistic representation of a
differential equation of the type
\begin{equation}\label {EEllip}
  Lu=F\left(x,u,u'\right),
\end{equation}
on $[0,1]$ with boundary conditions, where $L$ is the generator of a
one-dimensional stochastic differential equation of the type $
Lg=\frac{\sigma^2}{2}g''+\beta'g'$, with $\sigma, \beta$ being real continuous
functions and $\sigma$ is strictly positive. In general, \eqref{EEllip} is a
semilinear PDE, which reduces to an ODE in the case of one dimension
considered here. The drift $\beta'$ is the derivative of a continuous function
$\beta$, in general a distribution.  A typical example of such $\beta$ is the
path of a fixed continuous process.  $F$ is a continuous real function defined
on $\mathbb [0,1] \times \R^2$. When $F$ does not depend on $u$ and $u'$, and
$x$ varies on the real line, \eqref{EEllip} was introduced in
\cite{frw1,frw2}, via the notion of $C^1$-solutions which appear as limit of
solutions of elliptic problems with regularized coefficients.  Indeed
\cite{frw1,frw2} investigated the case of initial conditions.
% which can be easily reduced to the case of boundary conditions, see
% Proposition~\ref{P11}.

One-dimensional stochastic differential equations with distributional drift
were examined by several authors, see \cite{frw1, frw2, BQ, trutnau} and
references therein, with a recent contribution by \cite{karatzas}.  Such an
equation appears formally as
\begin{equation} \label{SDE} dX_t = \beta'(X_t) dt + \sigma(X_t) dW_t.
\end{equation}
More recently some contributions also appeared in the multidimensional case,
see \cite{basschen2}, when the drift is a Kato class measure and in
\cite{issoglio} for other type of time dependent drifts.

A motivation for studying the mentioned type of equations comes from the
literature of \emph{random media}.  A special case of equation \eqref{SDE} with
$a = 1$ and $\beta$ being the continuous function $\beta$ was considered by
several authors, see e.g.  \cite{mat1, mat2, hushi, seignourel}, in particular
in relation with long time behavior, without defining the stochastic analysis
framework.  In that case, the solution $X$ of \eqref{SDE} is the so called \emph{Brox diffusion}.  The discrete version of this is the random walk in random
environment.  In that case the solution $X$ describes the motion of a particle
in an \emph{irregular medium}: the velocity of the medium $\beta'$ can be for
instance the realization of a Gaussian white noise but the noise could be
 also of
other nature.  In particular $\beta$ is often a (possibly fractional) Brownian
path, but it could be the path any continuous process.

% In particular the primitive of $b$ is a Brownian path, but it could be the
% path any continuous process

This paper is devoted to the following main objectives.
\begin{enumerate}
\item We study existence and uniqueness of a solution $u$ of the semilinear
  equation \eqref{EEllip} with prescribed initial conditions for $u(0)$ and
  $u'(0)$, see Proposition~\ref{P34}.
\item We show that the initial value problem allows to provide a solution to
  the boundary value problem on $[0,1]$ for \eqref{EEllip}, see
  Proposition~\ref{pdex}.
\item We explore several assumptions on $F$ which provide existence and/or
  uniqueness of solutions to the boundary value problem, see
  Corollary~\ref{CEE} and Propositions~\ref{pdex} and~\ref{FP}.
\item We study the uniqueness of solutions of BSDEs driven by a c\`adl\`ag
  martingale $M$ such that $\left< M \right>$ is continuous, see
  Theorem~\ref{thu}.
\item We show that a solution of the PDE \eqref{EEllip} with Dirichlet
  boundary conditions on $[0,1]$ generates a solution to a special forward
  BSDE (see Theorem~\ref{PB4}) with terminal condition at the random time
  $\tau$, where $\tau$ is the exit time from $[0,1]$ of a solution $X$ of an
  SDE with distributional drift.
\item Those solutions which are associated with \eqref{EEllip} are the unique
  solutions of the corresponding BSDE (in some reasonable class) whenever $F$
  fulfills in particular some strict monotonicity condition in the second
  variable, i.\,e.\ \eqref{mono} holds.
\item We illustrate situations where the BSDE admits no uniqueness in a
  reasonable class but the probabilistic representation still holds.
\end{enumerate}
% and we represent it through a backward stochastic differential equation
% (BSDE) with random terminal condition.
As we mentioned, a significant object of study is a backward SDE with random
terminal time, which was studied and introduced by \cite{dapa} when the
driving martingale is a Brownian motion. BSDEs driven by a c\`adl\`ag
martingale with fixed time terminal time were studied in \cite{buckdahn,
  elkarouihuang, sant, Briand2002robustness}.

The paper is organized as follows. After the introduction in Section~\ref{S2},
we recall some preliminaries about linear elliptic differential equations with
initial condition and the notion of martingale problem related to an SDE with
distributional drift.  In Section~\ref{S3}, we discuss existence and
uniqueness of solutions to \eqref{EEllip}, in Section~\ref{S4} we discuss the
first exit time properties of a solution to equation \eqref{SDE}.  In
Section~\ref{S5} we investigate uniqueness for BSDEs with random terminal
condition with related probabilistic representation.  Finally
Section~\ref{sec:existence} shows how a solution to \eqref{EEllip} generates a
solution to a special BSDE with terminal condition at random time.

\section{Preliminaries}
\label{S2}

% \subsection{Notations and recalls}

\subsection{The linear elliptic PDE with distributional drift}

\label{S22}

If $I$ is a real open interval, then $C^0(I)$ will be the space
% $F$-type space (according to the notations of \cite[Chapter 2]{ds})
of continuous functions on $I$ endowed with the topology of uniform
convergence on compacts. For $k\ge 1$, $C^k(I)$ will be the space of $k$-times
continuously differentiable functions on $I$, equipped with the topology of
uniform convergence of the first $k$ derivatives. If $I=\R, k \ge 0$, 
then we will
simply write  $C^k$ instead of  $C^k(\R)$.
% Furthermore, $L^2_{loc}$ denotes the space of all Borel functions which are
% square integrable when restricted to compact subsets.
If $I = [a,b]$ with $ -\infty < a < b < +\infty$, then $u:I \to \R$ is said to
be of class $C^1([a,b])$ if it is of class $C^1(\left]a,b\right[)$ and if the derivative
extends continuously to $[a,b]$.
% $T$ will be a fixed positive number.

In this section we introduce the ``generator'' $L$ of our diffusion with
distributional drift adopting the notations and conventions of
\cite{frw1,frw2}.
% In those papers the diffusions were allowed to live in a real interval $I$.
% Here for simplicity, our diffusions will be real valued.

Let $\sigma$, $ \beta \in C^0$ such that $\sigma>0$. We consider formally a
differential operator of the following type \cite[section 2]{frw1}:
\begin{equation}\label{E2.1}
  Lg=\frac{\sigma^2}{2}g''+\beta' g'.
\end{equation}
By a mollifier, we intend a function $\Phi$ belonging to the Schwartz space
$\shs\left(\R\right)$ with $\int\Phi\left(x\right)\,dx=1$. We denote
\[\Phi_n\left(x\right):=n
\Phi(nx),\quad\sigma_n^2:=\sigma^2\ast\Phi_n,\quad \beta_n:=\beta
\ast\Phi_n.\] We then consider
\begin{equation}\label{E2.2}
  L_n g=\frac{\sigma_n^2}{2}g''+\beta_n'g'.
\end{equation}
A priori, $\sigma_n^2$, $\beta_n$ and the operator $L_n$ depend on the
mollifier $\Phi$.
\begin{defi}
  \label{D31} Let $l\in C^0$.  A function $f\in C^1(\R)$ is said to be a
  $C^1$-\emph{solution} to
  \begin{equation}\label{E2.3}
    Lf=l,
  \end{equation}
  % (in the $C^1$ sense)
  if, for any mollifier $\Phi$, there are sequences $\left(f_n\right)$ in
  $C^2$, $\left(l_n\right)$ in $C^0$ such that
  \begin{equation}\label{E2.4}
    L_nf_n=l_n,\quad f_n\to f\ \text{in}\ C^1,\quad l_n\to l\ \text{in}\ C^0.
  \end{equation}
\end{defi}
The following proposition gives conditions for the existence of a solution $h$
to the homogeneous version of \eqref{E2.3}, see \cite[Prop. 2.3]{frw1}.
\begin{prop}\label{T24}
  Let $a \in \R $ be fixed.  There is a $C^1$-solution to $Lh=0$ such that
  $h'\left(x\right)\neq 0$ for every $x\in\R$ if and only if
  \begin{equation} \label{ESigma}
    \Sigma\left(x\right):=\lim_{n\to\infty}2\int_a^x\frac{\beta_n'}{\sigma_n^2}(y)\,dy
  \end{equation} 
  exists in $C^0$, independently from the mollifier. Moreover, in this case,
  any $C^1$-solution $f$ to $Lf=0$ fulfills
  \begin{equation}\label{E2.5}
    f'\left(x\right)=e^{-\Sigma\left(x\right)}f'(a), \ \forall x \in  \R.
  \end{equation}
\end{prop}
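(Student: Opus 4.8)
The plan is to reduce everything to the first-order ODE satisfied by $h'$. Heuristically, if $Lh=0$ then $\frac{\sigma^2}{2}h''+\beta'h'=0$, i.e.\ $(\log h')'=-2\beta'/\sigma^2$, so $h'(x)=h'(a)\exp\!\bigl(-2\int_a^x\beta'/\sigma^2\bigr)$; the quantity $\Sigma$ is precisely this exponent, and the issue is that $\beta'/\sigma^2$ only makes sense at the regularized level. So the first step is to work with the approximating equation $L_nf_n=0$, whose classical solutions are explicit: with $\Sigma_n(x):=2\int_a^x(\beta_n'/\sigma_n^2)(y)\,dy$ one has $f_n'(x)=f_n'(a)e^{-\Sigma_n(x)}$, and $f_n(x)=f_n(a)+f_n'(a)\int_a^x e^{-\Sigma_n(y)}\,dy$.

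For the ``if'' direction, assume $\Sigma_n\to\Sigma$ in $C^0$, independently of $\Phi$. Fix $h'(a)=1$, $h(a)=0$, and define $h(x):=\int_a^x e^{-\Sigma(y)}\,dy$. Setting $f_n(x):=\int_a^x e^{-\Sigma_n(y)}\,dy$, one checks $L_nf_n=0=:l_n$; since $e^{-\Sigma_n}\to e^{-\Sigma}$ uniformly on compacts (continuity of $\exp$), we get $f_n\to h$ in $C^1$ and $l_n\to 0$ in $C^0$, so $h$ is a $C^1$-solution in the sense of Definition~\ref{D31}, and $h'=e^{-\Sigma}$ never vanishes. For the ``only if'' direction, suppose $h$ is a $C^1$-solution with $h'$ nowhere zero. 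Take any mollifier $\Phi$ and the associated $f_n\to h$, $l_n\to 0$. From $L_nf_n=l_n$ we get $\frac{\sigma_n^2}{2}f_n''=l_n-\beta_n'f_n'$, hence $(f_n')'/f_n'=-2\beta_n'/\sigma_n^2+2l_n/(\sigma_n^2 f_n')$, i.e.\ $(\log f_n')'=-\Sigma_n'+2l_n/(\sigma_n^2f_n')$ on any interval where $f_n'\neq 0$. Integrating from $a$ to $x$,
\[
  \Sigma_n(x)=\log\frac{f_n'(a)}{f_n'(x)}+2\int_a^x\frac{l_n(y)}{\sigma_n^2(y)f_n'(y)}\,dy.
\]
Now $f_n'\to h'$ uniformly on compacts with $h'$ bounded away from $0$ on compacts (so $f_n'$ is too, for $n$ large), $\sigma_n^2\to\sigma^2$ uniformly on compacts with $\sigma^2>0$, and $l_n\to 0$ in $C^0$; hence the right-hand side converges in $C^0$ to $\log\bigl(h'(a)/h'(x)\bigr)$, which is independent of $\Phi$. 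Therefore $\Sigma_n$ converges in $C^0$ to a limit independent of the mollifier, which is by definition $\Sigma$, and we read off $h'(x)=h'(a)e^{-\Sigma(x)}$.

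Finally, \eqref{E2.5} for a general $C^1$-solution $f$ to $Lf=0$ follows from the same computation without assuming $f'$ nowhere vanishing: once $\Sigma$ is known to exist, pick the approximations $f_n\to f$, $l_n\to l=0$, and on a maximal open interval where $f_n'\neq 0$ the identity above gives $\log(f_n'(a)/f_n'(x))\to\Sigma(x)-0$, so $f_n'(x)\to f_n'(a)e^{-\Sigma(x)}$ there; passing to the limit and using continuity of $f'$ and of $e^{-\Sigma}$ extends $f'(x)=f'(a)e^{-\Sigma(x)}$ to all of $\R$ (if $f'(a)=0$ then $f'\equiv 0$, consistent with the formula).

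The main obstacle is the ``only if'' direction: controlling the term $2\int_a^x l_n/(\sigma_n^2 f_n')\,dy$ and, in particular, justifying that one may restrict to intervals where $f_n'$ stays away from zero. The key point is that $h'$ is continuous and nowhere zero, hence locally bounded below in absolute value, and $f_n'\to h'$ uniformly on compacts, so for $n$ large $|f_n'|$ is bounded below on any fixed compact; this makes $1/(\sigma_n^2 f_n')$ uniformly bounded there and kills the error term since $\|l_n\|_\infty\to 0$. One must also be a little careful that the limit $\Sigma$ produced in the two directions is genuinely mollifier-independent, which is exactly what the displayed identity delivers, since its right-hand side does not involve $\Phi$ in the limit.
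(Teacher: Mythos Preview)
The paper does not prove this proposition; it simply cites \cite[Prop.~2.3]{frw1}. So there is no ``paper's own proof'' to compare against, and your plan must be judged on its own merits.

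Your argument is essentially correct. The ``if'' direction is clean, and the ``only if'' direction is fine: the key observation that $h'$ is continuous and nowhere zero, hence bounded away from $0$ on compacts, so that for large $n$ the term $1/(\sigma_n^2 f_n')$ is uniformly bounded and the error integral dies, is exactly the right point.

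The only place that deserves tightening is the last paragraph, where you derive \eqref{E2.5} for a general $C^1$-solution $f$ (with $f'$ possibly vanishing). Your route via $\log(f_n'(a)/f_n'(x))$ forces a connectedness/open--closed argument and a separate treatment of the case $f'(a)=0$, and you have to keep track of the sign of $f_n'$. It works, but it is more delicate than necessary. A cleaner device---and in fact the one underlying the formula \eqref{E2.11} of Proposition~\ref{R2.8}---is to use the integrating factor directly at the regularized level: from $L_n f_n=l_n$ one gets
\[
\bigl(e^{\Sigma_n}f_n'\bigr)'=\frac{2\,e^{\Sigma_n}}{\sigma_n^2}\,l_n,
\qquad\text{hence}\qquad
f_n'(x)=e^{-\Sigma_n(x)}\Bigl(f_n'(a)+2\int_a^x \frac{e^{\Sigma_n(y)}}{\sigma_n^2(y)}\,l_n(y)\,dy\Bigr).
\]
Now pass to the limit: $\Sigma_n\to\Sigma$ in $C^0$ (already established), $\sigma_n^2\to\sigma^2>0$ and $l_n\to 0$ uniformly on compacts, so the integral vanishes and $f'(x)=e^{-\Sigma(x)}f'(a)$ for all $x$, with no nonvanishing hypothesis on $f_n'$ and no case split. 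This is both shorter and avoids the only genuinely delicate spot in your write-up.
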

\begin{rem} \label{R2.5}
\ 
  \begin{enumerate}
  \item In particular, this proves the uniqueness of solutions to the problem
    \begin{equation}\label{E2.9}
      Lf=l,\quad f\in C^1,\quad f\left(a\right)=x_0,\quad 
      f'\left(a\right)=x_1, 
    \end{equation}
    for every $l\in C^0$, $x_0,x_1\in\R$.
  \item In most of the cases we will set $a =0$.
  \end{enumerate}
\end{rem}
In the sequel we will always suppose the existence of $\Sigma$ as in
\eqref{ESigma}.  We will denote $h:\R\to\R$ such that $h(0) = 0$ and
$h'=\exp\left(-\Sigma\right)$ and $h_n:\R\to\R$ so that
$h_n=\exp\left(-\Sigma_n\right)$ with
$\Sigma_n=2\int_0^x\frac{\beta'_n}{\sigma_n^2}\left(y\right)dy$.
% \end{lemma}
The proposition below is a consequence of    \cite[Lemma 2.6]{frw1} and
    \cite[Remark 2.7]{frw1}.
\begin{prop} \label{R2.8} Let $a \in \R$ and $l\in C^0$ and $x_0, x_1\in\R$.
  Then there is a unique $C^1$-solution
  % in the $C^1$-sense
  to
  \begin{align}\label{E2.10}
    Lu&=l\\
    \nonumber u(a)&=x_0,\quad u'(a)=x_1.
  \end{align}
  The solution satisfies
  \begin{equation}
    \label{E2.11}
    u'\left(x\right) = e^{-\Sigma\left(x\right)} \left( 2 \int_a^xe^{\Sigma(y)}
      \frac{l\left(y\right)}{\sigma^2\left(y\right)}\, dy + x_1
    \right).
  \end{equation}
\end{prop}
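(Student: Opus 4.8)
The plan is to treat uniqueness and existence separately. Uniqueness is essentially already available: it is recorded in Remark~\ref{R2.5}(1). If one wants to see the argument explicitly, given two $C^1$-solutions $u_1,u_2$ of \eqref{E2.10} one picks, for a fixed mollifier, approximating sequences $f_n^{(i)}$ with $L_nf_n^{(i)}=l_n^{(i)}\to l$; by linearity of $L_n$ the differences $f_n^{(1)}-f_n^{(2)}$ exhibit $u_1-u_2$ as a $C^1$-solution of $L(u_1-u_2)=0$ with zero data at $a$, and then \eqref{E2.5} gives $(u_1-u_2)'\equiv0$, hence $u_1\equiv u_2$. So the real work is existence together with the formula \eqref{E2.11}.

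For existence I would fix a mollifier $\Phi$, set $\Sigma_n(x):=2\int_a^x\frac{\beta_n'}{\sigma_n^2}(y)\,dy$, and solve the regularized problem $L_nu_n=l$, $u_n(a)=x_0$, $u_n'(a)=x_1$ explicitly. Since $\sigma_n^2>0$ this is a bona fide linear ODE; rewriting it as $u_n''+\frac{2\beta_n'}{\sigma_n^2}u_n'=\frac{2l}{\sigma_n^2}$ and using $e^{\Sigma_n}$ as integrating factor yields
\[
u_n'(x)=e^{-\Sigma_n(x)}\left(2\int_a^xe^{\Sigma_n(y)}\frac{l(y)}{\sigma_n^2(y)}\,dy+x_1\right),\qquad u_n(x)=x_0+\int_a^xu_n'(t)\,dt .
\]
One checks $u_n\in C^2$ (because $\Sigma_n\in C^\infty$ and $l\in C^0$, so $u_n'\in C^1$), and $\Sigma_n(a)=0$ gives the prescribed data at $a$.

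Next I would pass to the limit $n\to\infty$. By Proposition~\ref{T24} and the standing assumption, $\Sigma_n\to\Sigma$ in $C^0$, so $e^{\pm\Sigma_n}\to e^{\pm\Sigma}$ uniformly on compacts; the mollified coefficients satisfy $\sigma_n^2\to\sigma^2$ uniformly on compacts, and since $\sigma^2>0$ the functions $\sigma_n^2$ stay bounded below by a positive constant on each compact for $n$ large, so $l/\sigma_n^2\to l/\sigma^2$ uniformly on compacts. Hence the displayed expression for $u_n'$ converges, uniformly on compacts, to the right-hand side of \eqref{E2.11}; denoting this limit by $v$ and setting $u(x):=x_0+\int_a^xv(t)\,dt$, one gets $u_n\to u$ in $C^1$ while $l\to l$ trivially in $C^0$, so $u$ is a $C^1$-solution of \eqref{E2.10} with $u'$ given by \eqref{E2.11}. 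Since $\Sigma$, and therefore $v$, is independent of $\Phi$, the same $u$ serves for every mollifier, which is what Definition~\ref{D31} requires.

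I expect the only genuine obstacle to be the uniformity of the convergences used in the limit step — that $\Sigma_n\to\Sigma$ uniformly on compacts (which is what makes $e^{\Sigma_n}$ converge uniformly) and that $\sigma_n^2$ admits a uniform positive lower bound on compacts for large $n$. Once these are in hand, upgrading $u_n\to u$ from $C^0$ to $C^1$ is automatic from the closed form of $u_n'$, and the rest is routine bookkeeping.
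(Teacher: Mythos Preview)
Your argument is correct. The paper itself does not prove this proposition in detail: it only records that the statement is a consequence of Lemma~2.6 and Remark~2.7 of \cite{frw1}. Your proof is precisely the standard self-contained argument one would expect behind that citation---solve the regularized linear ODE explicitly via the integrating factor $e^{\Sigma_n}$, then pass to the limit using the standing assumption that $\Sigma_n\to\Sigma$ in $C^0$ and the uniform local lower bound on $\sigma_n^2$. The uniqueness part via Remark~\ref{R2.5}(1) is exactly how the paper intends it.

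One small point worth making explicit: in the formula \eqref{E2.11} the function $\Sigma$ must be the one with base point $a$ (i.e.\ $\Sigma(a)=0$), as introduced in \eqref{ESigma}, so that $u'(a)=x_1$ is recovered; you handle this correctly by defining $\Sigma_n(x)=2\int_a^x\frac{\beta_n'}{\sigma_n^2}$, but it is worth a word since immediately after Remark~\ref{R2.5} the paper shifts the base point of $\Sigma$ to $0$.
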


We will denote by $\shd_L$
% (resp. $\shd_L\left(I\right)$)
the set of all $f\in C^1$ which are $C^1$-solutions of
% (resp. $C^1\left(I\right)$) such that there exists some $l\in C^0$ with
$Lf = l$ for some $l\in C^0$.
% in the $C^1$-sense.
This defines without ambiguity $L:\shd_L \to C^0$.
% (resp. $\shd_L\left(I\right)$)

\subsection{Related martingale problem}
\label{SFSDE}

For the moment we fix a probability space $(\Omega, {\shg},\rz{P})$.  All
processes will be considered on the index set $\rz{R}_+$.
% The $F$-type space of continuous processes equipped with the ucp topology is
% denoted by $\cal{C}$.  We recall that a sequence of processes $(H_n)$ in
% $\cal{C}$ converges ucp to $H$ if, for every $T>0$, $\sup_{t\in[0,T]}
% |(H_n-H)(t)|$ converges to zero in probability. Note that $H$ belongs
% automatically to $\cal{C}$.

For convenience, we follow the framework of stochastic calculus introduced in
\cite{rv1} and developed in several papers. A survey of that calculus in
finite dimension is given in~\cite{RVSem}.
% and continued in \cite{rv2,rv3,rv4}, \cite{wo1,wo2,wo3} and \cite{rvw}.  In
% this paper
We will fix a filtration $\shf = (\shf_t)$ which will fulfill the usual
conditions.

The covariation of two continuous processes $X$ and $Y$ is defined as follows.
Suppose that
\begin{equation}\label{E1.1}
  % \int_0^tY_s\,d^-X_s&:=\lim_{\varepsilon\to
  % 0+}\int_0^tY_s\frac{X_{s+\varepsilon}-X_s}{\varepsilon}\,ds\\
  % \label{E1.4}
  % [X,Y]_t
  A_t:=\lim_{\varepsilon\to 0+}C_{\varepsilon}(X,Y)_t
\end{equation}
exists for any $t \in [0,T]$ in probability, where
\begin{equation*}
  C_\varepsilon(X,Y)_t:=\frac{1}{\varepsilon}\int_0^t\left(X_{s+\varepsilon}-X_s\right)\left(Y_{s+\varepsilon}-Y_s\right)\,ds.
\end{equation*}
We say that $(X,Y)$ admit a covariation if the random function $(A_t)$ admits
a (necessarily unique) continuous version, which will be designated by
$[X,Y]$.  For $\left[X,X\right]$ we often shortly write $[X]$.  All
% stochastic integrals and
the covariation processes will be continuous.
% of course elements of $\mathcal C$.  If $[X,Y]$, $\left[X,X\right]$,
% $\left[Y,Y\right]$ exist we say that $(X,Y)$ \emph{has all its mutual
% covariations}.
\begin{rem}\label{R1.2}
  In \cite[Propositions 1, 9 and 11, Remarks 1 and 2]{RVSem} we can find the
  following.
  \begin{enumerate}[a)]
  \item If $\left[X,X\right]$ exists, then it is always an increasing process
    and $X$ is called a \emph{finite quadratic variation process}. If
    $\left[X,X\right]\equiv 0$, then $X$ is said to be a \emph{zero quadratic
      variation process}.
  \item \label{R1.2b}Let $X$ and $Y$ be continuous processes such that
    $[X,Y]$, $\left[X,X\right]$, $\left[Y,Y\right]$ exist. Then $[X,Y]$ is a
    bounded variation process. If $f,g\in C^1$, then
    \begin{equation*}
      \left[f(X),g(Y)\right]_t=\int_0^t
      f'(X)g'(Y)\,d[X,Y].
    \end{equation*}
  \item \label{R1.2c} If $A$ is a zero quadratic variation process and $X$ is
    a finite quadratic variation process, then $\left[X,A\right]\equiv 0$.
  \item \label{R1.2d} A bounded variation process is a zero quadratic
    variation process.
  \item \label{R1.2e}
    % If $\F=(\shf_t)_{t\in\left[0,T\right]}$ is a filtration,
    If $M$ and $N$ are $\shf$-local martingales, then $[M,N]$ is the usual
    covariation process 
$\left<M,N\right>$.
    % \item (\emph{Classical It\^{o} formula}) If $f\in C^2$, then
    %   $\int_0^\cdot
    %   f'(X)\,d^-X$ exists and is equal to
    %   \[
    %   f(X)-f(X_0)-\frac{1}{2}\int_0^\cdot f''(X)\,d[X].
    %   \]
    % \item If $f\in C^1$ and $g\in C^2$, then the forward integral
    %   $\int_0^\cdot
    %   f(X)d^-g(X)$ is well defined.
  \end{enumerate}
\end{rem}

An $\shf$-\emph{Dirichlet process} is the sum of an $\shf$-local continuous
martingale $M$ and an $\shf$-adapted zero quadratic variation process $A$, see
\cite{fo,ber}.
\begin{rem}\label{R1.3}
  % (\cite{rvw})
  Let $X=M+A$ be an $\shf$-Dirichlet process.
  \begin{enumerate}
  \item Remark~\ref{R1.2}\ref{R1.2c} and~\ref{R1.2e} together with the
    bilinearity of the covariation operator imply that $[X]=\left<M\right>$.
  \item If $f\in C^1$, then $f(X)=M^f+A^f$ is an $\shf$-Dirichlet process,
    where
    \begin{equation*}
      M^f=\int_0^\cdot f'(X_s)\, dM_s
    \end{equation*} and $A^f:=f(X)-M^f$ has zero quadratic
    variation. This easily follows from the bilinearity of covariation and
    Remark~\ref{R1.2}\ref{R1.2b},~\ref{R1.2c} and~\ref{R1.2e}. 
    See also \cite{ber} for a similar result and
    Proposition 17 in \cite{RVSem} for a generalization
    to weak Dirichlet processes.
  \end{enumerate}
\end{rem}
\begin{defi}
  Given a stopping time $\tau$ and a process $X$, we denote by $X^\tau$ the
  \emph{stopped process}
  \begin{equation*}
    X_t^\tau:=X_{t\wedge\tau},\qquad t\geq 0.
  \end{equation*}
\end{defi}
\begin{rem}\label{R1.4} Let $\tau$ be an $\shf$-stopping
  time.
  % Let $X$ be a $\shf$-adapted continuous process.
  If $X$ is an $\shf$-semimartingale (resp.\ $\shf$-Dirichlet process), then
  the stopped processes $X^{\tau}$ is also a semimartingale (resp.\
  $\shf$-Dirichlet process).
\end{rem}
In the classical theory of Stroock and Varadhan, see e.\,g.\ \cite{stroock},
the solutions of martingale problems are probabilities on the canonical space
$C^0([0,T])$ equipped with its Borel $\sigma$-field and the Wiener measure. Here
the meaning is a bit different since the solutions are considered to be
processes. For the sequel of the section we fix  $x_0 \in \R$.

\begin{defi} \label{DMartPr}
A process $X$ (defined on some probability
  space), is said to solve the \emph{martingale problem} $\MP\left(\sigma,
    \beta; x_0\right)$ 
%related to $L$ with initial condition $X_0=x_0,\ x_0\in\R$ 
if $X_0 = x_0$ a.s. and
  \begin{equation} \label{E12}
    f(X_t)-f\left(x_0\right)-\int_0^tLf(X_s)ds
  \end{equation}
  is a local martingale for any $f\in\shd_L$.
  % and $X_0=x_0$.

  % More generally, for $ x\in\R$,
  % % $s\geq 0,\ x\in\R$,
  % we say that $\left(X_t\right),\ t\geq 0$,
  % % $\left(X_t^{s,x},t\geq %$\left(X_t^{s,x},t\geq 0\right)$
  %   solves the martingale problem related to $L$ with initial value
  %   $x$
  % %   at time $s$
  %   if
  %   \begin{enumerate}[(i)]
  %   \item $X_0=x$,
  %   \item for every $f\in\shd_L$,
  %     \begin{equation*}
  %     f\left(X_t\right)-f\left(x\right)-\int_0^tLf\left(X^{s,x}_r\right)dr,\qquad t\geq 0,
  %   \end{equation*}
  %   is a local martingale.
  % \end{enumerate}
\end{defi}

In the sequel we will denote by $\shf^X = (\shf^X_t)$ the canonical filtration
associated with $X$.

\begin{defi} \label{DULaw} We say that the martingale problem $\MP(\sigma,
  \beta;x_0)$ admits \emph{uniqueness (in law)} if any processes $X_1$ and
  $X_2$, defined on some probability space and solving the martingale problem,
  have the same law.
  % We remark that $X^{s,x}$ solves the martingale problem at time $s$
  % if and
  % only if $X_t:=X_{t-s}^{s,x}$ solves the martingale problem at time
  % $0$.
\end{defi}
% \begin{prop}\label{P3.2}
%   Let $I=\left]a,b\right[$ be the image of $h,-\infty\leq a<b\leq
%   +\infty$. A process $X$ solves the martingale problem related to $L$ if
%   and only if $Y=h(X)$ is a local martingale with values in $I$ which solves
%   weakly the stochastic differential equation
%   \begin{equation}
%     \label{4.1}
%     Y_t=Y_0+\int_s^t\sigma_0(Y_r)dW_r,\qquad s\geq 0
%   \end{equation}
%   where $Y_0=h(X_0)$ and $\sigma_0\left(y\right)=\left(\sigma
%     h'\right)\left(h^{-1}\left(y\right)\right)$.
% \end{prop}
% \begin{rem}
%   \begin{enumerate}[(i)]
%   \item $Y$ always stays in the interval $I$.
%   \item Let $T>0$ and $\left(Z_t\right)_{t\geq 0}$ be a process. We denote
%     by $\F=\F_Z$ the natural forward filtration of $Z$, given by
%     $\shf_t=\sigma\left(Z_s:s\leq t\right)$, clearly, we have
%     $\shf_Y=\shf_X$.
%   \item Since $Y$ is a local martingale, we know from Remark~\ref{R1.3} that
%     $X=h^{-1}(Y)$ is a Dirichlet process with martingale part
%     \[M_t^X=\int_0^t\left(h^{-1}\right)'(Y_s)dY_s=\int_0^t\sigma(X_s)dW_s.\]
%     In particular, $X$ is a finite quadratic variation process with
%     \[\left[X,X\right]=\left[M^X,M^X\right]_t=\int_0^t\sigma^2(X_s)dW_s.\]
%   \end{enumerate}
% \end{rem}
% VOM ANFANG AN $T$ FIXIEREN. ALLE LOESUNGEN SIND
% IN $[0,T]$ DEFINIERT. \\
The proposition below was the object of Proposition 3.13 of \cite{frw1}.
\begin{prop}\label{P3.13}
%Let  $x_0\in\R$.
  Let $v$ be the unique solution to $Lv=1$ in the $C^1$-sense such that $
  v\left(0\right)=v'\left(0\right)=0$. Then
  % for any horizon $T>0$,
  there exists a unique (in law) solution 
 the martingale problem $\MP(\sigma,  \beta;x_0)$
%to the martingale problem related to  $L$  with
% prescribed 
%initial condition $x_0$ 
if and only if
  \begin{equation}
    \label{4.2}
    v\left(-\infty\right)=v\left(+\infty\right)=+\infty.
  \end{equation}
\end{prop}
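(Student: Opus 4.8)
The plan is to reduce the martingale problem, via the scale function $h$, to a one-dimensional stochastic differential equation in natural scale, and then to invoke Feller's test for explosions. Recall from Proposition~\ref{T24} that $h\in C^1$ with $h(0)=0$ and $h'=e^{-\Sigma}>0$ is a $C^1$-solution of $Lh=0$, hence $h\in\shd_L$; being strictly increasing it is a $C^1$-diffeomorphism of $\R$ onto the open interval $J:=\big(h(-\infty),h(+\infty)\big)$. Likewise $v\in\shd_L$ with $Lv=1$, and by Proposition~\ref{R2.8}, formula \eqref{E2.11} with $a=0$, one has $v'(x)=2\,h'(x)\int_0^x\frac{dy}{h'(y)\sigma^2(y)}$; thus $v$ is nonnegative, decreasing on $(-\infty,0]$ and increasing on $[0,\infty)$, with $v(0)=v'(0)=0$. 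The crucial observation is that $\phi:=v\circ h^{-1}$ satisfies $\phi'(z)=2\int_0^{h^{-1}(z)}\frac{dy}{h'(y)\sigma^2(y)}$, which is of class $C^1$ on $J$; hence $\phi\in C^2(J)$, with $\phi(0)=\phi'(0)=0$ and $\phi''(z)=2/\rho(z)^2$, where $\rho(z):=h'\big(h^{-1}(z)\big)\,\sigma\big(h^{-1}(z)\big)$ is continuous and strictly positive on $J$. More generally, for any $f\in\shd_L$ with $Lf=l$, the same use of \eqref{E2.11} shows that $\psi:=f\circ h^{-1}$ is of class $C^2(J)$ and satisfies $\tfrac12\,\rho^2\psi''=l\circ h^{-1}$.

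For the implication ``$v(\pm\infty)=+\infty$ implies existence'', I would take a probability space carrying a Brownian motion $B$ and let $Z$ be a weak solution of $dZ_t=\rho(Z_t)\,dB_t$, $Z_0=h(x_0)$, in $J$; since $\rho$ is continuous and locally bounded away from $0$, such a solution exists and is unique in law up to its exit time $S$ from $J$ (classical one-dimensional SDE theory). It\^o's formula applied to $\phi(Z_{t\wedge S})$, together with $\tfrac12\rho^2\phi''\equiv1$, shows that $\phi(Z_{t\wedge S})-\phi(Z_0)-(t\wedge S)$ is a local martingale; since $\phi\ge0$ on $J$ and $\phi(z)\to+\infty$ as $z\to\partial J$ (because $\phi=v\circ h^{-1}$ and $v(\pm\infty)=+\infty$), a localisation and Fatou argument — this is exactly Feller's non-explosion criterion — forces $S=+\infty$ almost surely. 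Setting $X_t:=h^{-1}(Z_t)$ then gives a process on $\R_+$ with $X_0=x_0$; and for $f\in\shd_L$ with $Lf=l$, It\^o's formula applied to $\psi(Z_t)=f(X_t)$ yields
\[ f(X_t)-f(x_0)-\int_0^t l(X_s)\,ds=\int_0^t\psi'(Z_s)\,\rho(Z_s)\,dB_s, \]
a local martingale, so $X$ solves $\MP(\sigma,\beta;x_0)$.

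Still assuming $v(\pm\infty)=+\infty$, I would prove uniqueness as follows. Let $X$ be any (continuous) solution; applying Definition~\ref{DMartPr} with $f=h$ resp.\ $f=v$, the process $Z:=h(X)$ is a continuous local martingale with values in $J$ (as $X$ is $\R$-valued) and $v(X_t)-v(x_0)-t$ is a local martingale. Writing $v(X_t)=\phi(Z_t)$ and applying It\^o's formula to the $C^2$ function $\phi$, the uniqueness of the canonical decomposition of the continuous semimartingale $\phi(Z)$ forces $\tfrac12\int_0^t\phi''(Z_s)\,d\langle Z\rangle_s=t$, i.e.\ $\langle Z\rangle_t=\int_0^t\rho(Z_s)^2\,ds$. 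Hence $B_t:=\int_0^t\rho(Z_s)^{-1}\,dZ_s$ is a Brownian motion (L\'evy's characterisation) and $Z$ is a weak solution of $dZ=\rho(Z)\,dB$ in $J$ which, being defined on all of $\R_+$ and staying in $J$, is non-exploding. By the weak uniqueness for that equation, the law of $Z$, hence that of $X=h^{-1}(Z)$, is uniquely determined.

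For the converse I would argue by contraposition: suppose e.g.\ $v(+\infty)<\infty$. Then $\phi(z)\to v(+\infty)<\infty$ as $z\uparrow r:=\sup J$; since $\phi(0)=\phi'(0)=0$ and $\phi''=2/\rho^2$, $\phi$ is precisely Feller's test function (based at $0\in J$) for the natural-scale equation $dZ=\rho(Z)\,dB$, so Feller's test asserts that its minimal solution reaches $r$ in finite time with positive probability. If $X$ solved $\MP(\sigma,\beta;x_0)$, the previous paragraph would produce from $X$ a weak solution of $dZ=\rho(Z)\,dB$ defined on all of $\R_+$ and staying in $J$, contradicting this; hence no solution of the martingale problem exists, and a fortiori there is no unique one. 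The delicate points are the regularity $\phi\in C^2(J)$ and the identification of the time change $\langle h(X)\rangle_t=\int_0^t\rho\big(h(X_s)\big)^2\,ds$ through \eqref{E2.11} — which turn the martingale problem into the ordinary one-dimensional SDE $dZ=\rho(Z)\,dB$ — together with the correct translation of the non-explosion condition into $v(\pm\infty)=+\infty$; the remaining ingredients (weak existence and uniqueness for $dZ=\rho(Z)\,dB$, Feller's test) are classical.
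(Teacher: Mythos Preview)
The paper does not supply its own proof of this proposition: it merely records that the statement ``was the object of Proposition 3.13 of \cite{frw1}''. Your argument is correct and is precisely the natural route --- transform via the scale function $h$ to the driftless equation $dZ=\rho(Z)\,dB$ on $J=h(\R)$, identify $\phi=v\circ h^{-1}$ as the Feller test function through \eqref{E2.11}, and invoke classical one-dimensional weak well-posedness plus Feller's criterion. This is essentially the strategy of \cite{frw1}, so there is no real divergence to discuss.

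Two small points worth tightening. First, you parenthetically assume $X$ is continuous; this is implicit in the paper's framework (cf.\ the discussion before Definition~\ref{DMartPr} and Remark~\ref{RR}) but is not stated in Definition~\ref{DMartPr} itself, so you might say explicitly that the martingale problem is posed for continuous processes. Second, in the converse you invoke ``the previous paragraph'' to turn a hypothetical solution $X$ into a non-exploding weak solution of $dZ=\rho(Z)\,dB$; that paragraph opened with ``Still assuming $v(\pm\infty)=+\infty$'', so it is worth noting that the construction of $Z$ and the identification $\langle Z\rangle_t=\int_0^t\rho(Z_s)^2\,ds$ use only $h,v\in\shd_L$ and not the hypothesis $v(\pm\infty)=+\infty$. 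Neither point is a mathematical gap.
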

In several contexts (see \cite{frw1}) the solution of the previous martingale
problem turns out to be a solution (in the proper sense) of \eqref{SDE}, but
it will not be used in this paper.

Proposition~\ref{P3.13} implies the following.
\begin{prop} \label{4.2bis}
  % Let $s\geq 0$ and
 % Let $x_0\in\R$. 
The martingale problem $\MP\left(\sigma,\beta;x_0\right)$
  admits exactly one solution in law if and only if the function $v:\R\to\R$
  defined by
  \begin{equation}\label{F1}
    \begin{aligned}
      v(0)&=0,\\
      v'\left(x\right)&=e^{-\Sigma\left(x\right)}\left(2\int_0^x\frac{1}{\sigma^2}\left(y\right)dy\right)
    \end{aligned}
  \end{equation}
  fulfills
\begin{equation} \label{F2}
 v\left(-\infty\right)=v\left(+\infty\right)=+\infty.
\end{equation}
%  \begin{align}\label{F2}
%    v\left(-\infty\right)&=-\infty,&v\left(+\infty\right)&=+\infty.
%  \end{align}
\end{prop}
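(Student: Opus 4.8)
The statement is essentially a reformulation of Proposition~\ref{P3.13}. The plan is therefore to identify the function $v$ prescribed by~\eqref{F1} with the object occurring in Proposition~\ref{P3.13}, namely the unique $C^1$-solution of $Lv=1$ normalised by $v(0)=v'(0)=0$, and then simply to read off the claimed equivalence.

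First I would invoke Proposition~\ref{R2.8} with $a=0$, $l\equiv 1$ and $x_0=x_1=0$. On the one hand it guarantees that the $C^1$-solution of $Lv=1$ with $v(0)=v'(0)=0$ exists and is unique, so that it is a well-defined function, precisely the one entering Proposition~\ref{P3.13}. On the other hand it furnishes the derivative $v'$ through the explicit expression~\eqref{E2.11}; specialising that expression to the present data, and recalling that $\Sigma(0)=0$ for the normalisation $a=0$ fixed after Proposition~\ref{T24} (so that the boundary term $x_1$ drops out), one obtains the function $v$ of~\eqref{F1}. Alternatively, writing $w=v'$, the equation $Lv=1$ reduces to the first order linear ODE $\tfrac{\sigma^2}{2}w'+\beta'w=1$, which one integrates with the initial condition $w(0)=0$; by the uniqueness in Proposition~\ref{R2.8} the result must coincide with the $v$ described in the statement.

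Once this identification is in place, the conclusion is immediate: by Proposition~\ref{P3.13}, $\MP(\sigma,\beta;x_0)$ admits exactly one solution in law if and only if $v(-\infty)=v(+\infty)=+\infty$, which is condition~\eqref{F2}. I do not expect a genuine obstacle here; the only point requiring (routine) care is the passage from the abstract description of $v$ in Proposition~\ref{P3.13} to the closed form~\eqref{F1}, and this is entirely handled by Proposition~\ref{R2.8}.
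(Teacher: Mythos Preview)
Your approach is exactly the paper's: identify the $v$ of~\eqref{F1} with the unique $C^1$-solution of $Lv=1$, $v(0)=v'(0)=0$ via Proposition~\ref{R2.8}, and then read off the equivalence from Proposition~\ref{P3.13}. One small caveat: when you actually specialise~\eqref{E2.11} with $a=0$, $l\equiv 1$, $x_0=x_1=0$, you get
\[
v'(x)=e^{-\Sigma(x)}\Bigl(2\int_0^x \frac{e^{\Sigma(y)}}{\sigma^2(y)}\,dy\Bigr),
\]
not the formula printed in~\eqref{F1}; the factor $e^{\Sigma(y)}$ is missing there (a typo in the statement), and your remark that ``$\Sigma(0)=0$'' does not make it disappear for general $y$. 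The argument is unaffected, but you should flag the discrepancy rather than claim the specialisation literally reproduces~\eqref{F1}.
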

\begin{proof}
  This follows from Proposition~\ref{P3.13}, Proposition~\ref{R2.8} and from
  the fact that $v$ defined in \eqref{F1} is the solution of the problem
  \begin{align*}
    Lv&=1,&v\left(0\right)&=v'\left(0\right)=0.
  \end{align*}
  % on $\R$.
\end{proof}
From now on Assumption \eqref{F2} for the function $v$ defined by \eqref{F1}
will always be in force. Let then $X$ be a solution to the martingale problem
on a suitable probability space and $\shf^X$ be its canonical filtration.
\begin{rem} \label{RR} \
  \begin{enumerate}[i)]
    % \item \label{RR1}
  \item \label{RR1} By Remark 3.3 of \cite{frw1}, choosing $f$ as the identity
    function, $X$ is an $\shf^X$-Dirichlet process, whose local martingale
    part $M^X$ verifies
    \[\left[M^X\right]_t=\int_0^t\sigma^2(X_s)ds.\]
  \item Consequently by
% Remark~\ref{R1.2}\ref{R1.2e}
 Remark~\ref{R1.2}\ref{R1.2c}  and \ref{R1.2e}
together with  the bilinearity of
    covariation it follows $[X]_t=\int_0^t\sigma^2(X_s)ds$.
  \end{enumerate}
\end{rem}
\begin{prop}
  \label{P48}
  Let $X$ be a solution of $\MP(\sigma, \beta;x_0)$.
% for some $x_0 \in \R$.
  For every $\varphi\in\shd_L$ we have
  % There is an $\shf^X$-local martingale $M^\varphi$ such that
  \begin{equation*}
    \varphi(X_t)=\varphi(X_0)+\int_0^t\varphi'(X_s)dM_s^X+\int_0^t(L\varphi)(X_s)ds.
  \end{equation*}
\end{prop}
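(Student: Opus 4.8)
The plan is to identify the local martingale furnished by the martingale problem with the stochastic integral coming from the Dirichlet decomposition of $X$, and then to conclude by uniqueness of that decomposition.

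First, since $\varphi\in\shd_L$ we have $L\varphi\in C^0$, so by Definition~\ref{DMartPr} the process
$$N_t:=\varphi(X_t)-\varphi(X_0)-\int_0^t(L\varphi)(X_s)\,ds$$
is an $\shf^X$-local martingale, and because $X$, $\varphi$ and $L\varphi$ are continuous it is in fact a continuous local martingale with $N_0=0$. Next I would invoke Remark~\ref{RR}\ref{RR1}, which says $X=M^X+A^X$ is an $\shf^X$-Dirichlet process with $A^X$ of zero quadratic variation; by the second part of Remark~\ref{R1.3}, $\varphi(X)$ is then itself Dirichlet, $\varphi(X)=M^\varphi+A^\varphi$, with $M^\varphi_t=\int_0^t\varphi'(X_s)\,dM^X_s$ a continuous local martingale (the integrand $\varphi'(X)$ is continuous and $\shf^X$-adapted, hence locally bounded) and $A^\varphi:=\varphi(X)-M^\varphi$ of zero quadratic variation; in particular $M^\varphi_0=0$ and $A^\varphi_0=\varphi(X_0)$.

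Comparing the two representations of $\varphi(X_t)$ yields
$$N_t-M^\varphi_t=A^\varphi_t-\varphi(X_0)-\int_0^t(L\varphi)(X_s)\,ds.$$
The left-hand side is a continuous local martingale null at $0$; the right-hand side is a zero quadratic variation process, since $A^\varphi$ is, the Lebesgue integral term is of bounded variation hence of zero quadratic variation by Remark~\ref{R1.2}\ref{R1.2d}, and adding the constant $-\varphi(X_0)$ leaves this unchanged by bilinearity of the covariation. A continuous local martingale of zero quadratic variation has vanishing predictable bracket by Remark~\ref{R1.2}\ref{R1.2e}, hence is a.s.\ constant, equal here to its value $0$ at time $0$. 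Therefore $N\equiv M^\varphi$, and substituting back into the definition of $N$ gives exactly the stated identity.

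The whole argument is short; the only point that really needs a word of justification is the last one — that a process which is simultaneously a continuous local martingale and of zero quadratic variation must be constant — equivalently, that the decomposition of a continuous Dirichlet process into a continuous local martingale vanishing at the origin plus a zero quadratic variation part is unique. I expect this to be the main (mild) obstacle, although it is classical and already implicit in the remarks recalled in Section~\ref{S2}.
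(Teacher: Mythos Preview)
Your proof is correct and follows essentially the same approach as the paper: both obtain one decomposition of $\varphi(X)$ from the martingale problem and another from the $C^1$-stability of Dirichlet processes (Remark~\ref{R1.3}), and then conclude by uniqueness of the Dirichlet decomposition. The paper simply invokes this uniqueness in one line, whereas you spell out the underlying reason (a continuous local martingale of zero quadratic variation is a.s.\ constant), which is a welcome clarification rather than a different argument.
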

\begin{proof}
  By definition of the martingale problem
  % and
  there is an $\shf^X$-local martingale $M^\varphi$ such that
  \begin{equation}
    \label{E481}
    \varphi(X_t)=\varphi(X_0)+M^\varphi_t+\int_0^t(L\varphi)(X_s)ds.
  \end{equation}
  On the other hand, by Remark~\ref{RR}\ref{RR1}
  % $X$ is an $\shf^X$-Dirichlet process,
  and Remark~\ref{R1.3} $\varphi(X_t)$ is an $\shf^X$-Dirichlet process with
  decomposition
  \begin{equation}
    \label{E482}
    \varphi(X_t)=\varphi(X_0)+\int_0^t\varphi'(X_s)dM^X_s+A_t^\varphi,
  \end{equation}
  where $\left[A^\varphi\right]\equiv 0$. By the uniqueness of Dirichlet
  decomposition and the identification of \eqref{E481} and \eqref{E482} the
  result follows.
\end{proof}

\section{The semilinear elliptic PDE with distributional drift and boundary
  conditions}
\label{S3}

In this section we present the deterministic analytical framework that we will
need in the paper.

\subsection{The linear case}

We explain here how to transform the study of our initial value problem
% in the $C^1$-sense
to a boundary value problem.
\begin{defi}
  \label{DLP1}
  Let $a,b,A,B\in\R$, such that $-\infty<a<b<\infty$.
  % and $[a,b]\subset D$.
  Additionally, let $g:[a,b]\to\R$ be continuous. We say that $u:[a,b]\to\R$
  is a \emph{solution of the boundary value problem}
  \begin{equation}
    \label{EP1}
    \left\{
      \begin{aligned}
        Lu&=g,\\
        u(a)&=A,\\
        u(b)&=B,
      \end{aligned}
    \right.
  \end{equation}
  if there is a continuous extension $\tilde g:\R\to\R$ of $g$ and a function
  $\tilde u\in\shd_L$ fulfilling $\tilde u\left|_{[a,b]}\right.=u$, such that
  $\tilde u$ is a solution of
  \begin{equation}
    \label{EP2}
    L\tilde u=\tilde g,
  \end{equation}
  in the sense of Definition~\ref{D31}, and $\tilde u(a)=A$, $\tilde u(b)=B$.
\end{defi}
\begin{prop}
  \label{P11}
  Let $g:[0,1]\to\R$ be continuous, $A,B\in\R$, $a=0$ and $b=1$. Then there
  exists a unique solution $u$ to \eqref{EP1}, given by
  \begin{subequations}
    \label{exp}
    \begin{align}
      u(x)&=f(x)+\int_0^1K(x,y)g(y)dy\\
      f(x)&:=\frac{B\int_0^xdye^{-\Sigma(y)}+A\int_x^1dye^{-\Sigma(y)}}{\int_0^1dye^{-\Sigma(y)}}\label{exp2}\\
      K(x,y)&:=\1_{y\leq
        x}\frac{2e^{\Sigma(y)}}{\sigma^2(y)}\int_y^xdze^{-\Sigma(z)}-2\frac{\int_0^xdre^{-\Sigma(r)}}{\int_0^1dre^{-\Sigma(r)}}\frac{e^{\Sigma(y)}}{\sigma^2(y)}\int_y^1dze^{-\Sigma(z)}.\label{exp3}
    \end{align}
  \end{subequations}
\end{prop}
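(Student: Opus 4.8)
The plan is to reduce the boundary value problem \eqref{EP1} to the initial value problem solved in Proposition~\ref{R2.8} and then to pin down the free initial slope through the second boundary condition. Fix an arbitrary continuous extension $\tilde g\colon\R\to\R$ of $g$. By Proposition~\ref{R2.8}, for every $x_1\in\R$ there is a unique $\tilde u_{x_1}\in\shd_L$ with $L\tilde u_{x_1}=\tilde g$, $\tilde u_{x_1}(0)=A$ and $\tilde u_{x_1}'(0)=x_1$, whose derivative on $\R$ is given by \eqref{E2.11} with $a=0$:
\[
\tilde u_{x_1}'(x)=e^{-\Sigma(x)}\Bigl(2\int_0^x e^{\Sigma(y)}\frac{\tilde g(y)}{\sigma^2(y)}\,dy+x_1\Bigr).
\]
Integrating this from $0$ to $1$ shows that $x_1\mapsto\tilde u_{x_1}(1)=A+\int_0^1\tilde u_{x_1}'(x)\,dx$ is affine with slope $\int_0^1 e^{-\Sigma(x)}\,dx>0$. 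Hence there is exactly one value $x_1^\ast$ with $\tilde u_{x_1^\ast}(1)=B$, namely
\[
x_1^\ast=\frac{B-A-2\int_0^1 e^{-\Sigma(t)}\int_0^t e^{\Sigma(y)}\frac{\tilde g(y)}{\sigma^2(y)}\,dy\,dt}{\int_0^1 e^{-\Sigma(t)}\,dt}.
\]
Setting $u:=\tilde u_{x_1^\ast}\big|_{[0,1]}$ then yields, by Definition~\ref{DLP1}, a solution of \eqref{EP1}; this gives existence.

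For uniqueness, I would observe that $u|_{[0,1]}$ is in fact independent of the chosen extension. If $u$ is any solution of \eqref{EP1}, with associated $\tilde g$ and $\tilde u\in\shd_L$ as in Definition~\ref{DLP1}, then $\tilde u$ is a $C^1$-solution of $Lv=\tilde g$ with $v(0)=A$ and $v'(0)=\tilde u'(0)$, so Proposition~\ref{R2.8} forces $\tilde u'$ to equal the displayed expression with $x_1=\tilde u'(0)$; on $[0,1]$ the integrals there involve only $\tilde g|_{[0,1]}=g$. Imposing $\tilde u(1)=B$ then fixes $\tilde u'(0)=x_1^\ast$ via the same affine relation, so $u=\tilde u|_{[0,1]}$ is determined by $g,A,B$ alone. (This re-proves the uniqueness already contained in Remark~\ref{R2.5}.)

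Finally I would verify the closed form \eqref{exp}. Writing $E(x):=\int_0^x e^{-\Sigma(t)}\,dt$, one has
\[
u(x)=A+x_1^\ast E(x)+2\int_0^x e^{-\Sigma(t)}\int_0^t e^{\Sigma(y)}\frac{g(y)}{\sigma^2(y)}\,dy\,dt .
\]
Substituting $x_1^\ast$ (with $\tilde g=g$ on $[0,1]$), the $g$-free part becomes $A\bigl(1-\tfrac{E(x)}{E(1)}\bigr)+B\tfrac{E(x)}{E(1)}$, which equals $f(x)$ of \eqref{exp2} after using $1-\tfrac{E(x)}{E(1)}=\tfrac{1}{E(1)}\int_x^1 e^{-\Sigma(t)}\,dt$. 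For the $g$-dependent part I would apply Fubini's theorem: exchanging the order of integration in the double integral above (over $0\le y\le t\le x$) turns it into $\int_0^x\frac{2e^{\Sigma(y)}}{\sigma^2(y)}\bigl(\int_y^x e^{-\Sigma(z)}\,dz\bigr)g(y)\,dy$, i.e.\ the first summand of $K(x,y)$ in \eqref{exp3}, while the same term evaluated at $x=1$ and weighted by $E(x)/E(1)$ (coming from $x_1^\ast E(x)$, with its minus sign) produces the second summand; collecting them gives $\int_0^1 K(x,y)g(y)\,dy$. The only points needing care are the extension-independence underlying the uniqueness claim and the bookkeeping in this final Fubini computation; everything else is a direct consequence of Proposition~\ref{R2.8}.
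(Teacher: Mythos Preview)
Your proposal is correct and follows essentially the same route as the paper: reduce to the initial value problem of Proposition~\ref{R2.8}, determine the free initial slope $x_1$ from the affine relation $\tilde u_{x_1}(1)=B$, and then read off the explicit kernel after a Fubini step. The only cosmetic difference is in the uniqueness argument: the paper subtracts two putative solutions and shows the difference vanishes on $[0,1]$, whereas you argue directly that any solution, regardless of the chosen extension $\tilde g$, is forced to have $\tilde u'(0)=x_1^\ast$ and hence coincides on $[0,1]$ with the explicit formula; both arguments rest on exactly the same input, namely the integral representation \eqref{E2.11} and the strict positivity of $\int_0^1 e^{-\Sigma}$.
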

\begin{rem} \label{Rexp} For every $y \in [0,1], x \mapsto K(x,y)$ is
  absolutely continuous and $(x,y) \mapsto \partial_x K(x,y)$ belongs to
  $L^\infty([0,1]^2)$.
\end{rem}
\begin{proof}[Proof of Proposition~\ref{P11}]
  We start with existence. Let $\tilde g$ be a continuous extension of $g$ and
  $x_1\in\R$. Then, by Proposition~\ref{R2.8}, there exists a unique solution
  $\tilde{u}$ to the problem on the real line,
  \begin{subequations}
    \label{ue}
    \begin{align}
      L\tilde u(x)&=\tilde g(x),\ x\in\R,\\
      \tilde u(0)&=A,\\
      \tilde u'(0)&=x_1,
    \end{align}
  \end{subequations}
  given by
  \begin{equation}
    \label{given}
    \tilde u(x)=A+\int_0^xe^{-\Sigma(y)}\left(2\int_0^ye^{\Sigma(z)}\frac{\tilde g(z)}{\sigma^2(z)}dz+x_1\right)dy.
  \end{equation}
  We look for $x_1\in\R$, so that $\tilde u(1)=B$. This gives
    \begin{align*}
      B&=A+x_1\int_0^1e^{-\Sigma(y)}dy+2\int_0^1dye^{-\Sigma(y)}\int_0^ydze^{\Sigma(z)}\frac{\tilde g(z)}{\sigma^2(z)},\\
      x_1&=\frac{B-A-2\int_0^1dze^{\Sigma(z)}\frac{\tilde
          g(z)}{\sigma^2(z)}\int_z^1dye^{-\Sigma(y)}}{\int_0^1e^{-\Sigma(y)}dy}.
    \end{align*}
  We insert $x_1$ into \eqref{given} and use the fact that $u=\tilde
  u|_{[0,1]}$ and $g=\tilde g|_{[0,1]}$. This gives \eqref{exp}, and we get
  $u(0)=A$ and $u(1)=B$.

  To show uniqueness, let $v^1$ and $v^2$ be two solutions of \eqref{EP1}, and
  set $v=v^1-v^2$. Then there is $\tilde v\in\shd_L$ with $\tilde
  v|_{[0,1]}=v$ and an $\tilde l\in C^0$ with $\tilde l|_{[0,1]}=0$, so that
    \begin{eqnarray*}
      L\tilde v(x)&=&\tilde l,
%&\tilde l\left|_{[0,1]}\right.&=0
\\
      \tilde v(0)&=&\tilde v(1)=0.
    \end{eqnarray*}
  We need to show that $v\equiv 0$. By Proposition~\ref{R2.8} we get
  \begin{align*}
    \tilde v'(x)&=e^{-\Sigma(x)}\left(2\int_0^x\frac{\tilde
        l(y)}{\sigma^2(y)}e^{\Sigma(y)}dy+\tilde v'(0)\right)&\forall x&\in\R.
  \end{align*}
  In particular, since $\tilde l\left|_{[0,1]}\right.=0$, we get
  \begin{align*}
    \tilde v'(x)&=e^{-\Sigma(x)}\tilde v'(0),&\forall x&\in[0,1].
  \end{align*}
  Consequently, for $x\in[0,1]$,
  \begin{equation*}
    \tilde v(x)=\left(\int_0^xdy\,e^{-\Sigma(y)}\right)\tilde v'(0).
  \end{equation*}
  Since $\tilde v(1)=0$, it follows $\tilde v'(0)=0$ and so $v(x)=\tilde
  v(x)=0$ $\forall x\in[0,1]$.
\end{proof}

\subsection{Solution of the semilinear problem on the real line}
\label{SNLC}

We extend here the notion of $C^1$-solution to the semilinear case.
\begin{defi}
  \label{strongsolnonl}
  Let $F: \R \times \R^2\to\R$ be a continuous function.  We say that $u\in
  C^1$ is a $C^1$-\emph{solution} (on the real line) of
  \begin{equation}
    \label{pdenonlinear}
    Lu=F\left(x,u,u'\right)
  \end{equation}
  if $u$ is a $C^1$-solution of $ Lu= h$, with $h: \R \rightarrow \R$ defined
  by $h(x) = F\left(x,u(x),u'(x)\right)$.
\end{defi}
\begin{defi} \label{Lipschitz} 
Let  us consider a function $F: I \times \R \times \R \to \R $,
\begin{enumerate}
\item  $(x,y,z) \mapsto F(x,y,z)$ will be called \emph{globally Lipschitz} with
  respect to $z$ (resp. $(y,z)$)
  % (with constant $k$)
  if $F$ is Lipschitz with respect to $z$ (resp. $(y,z)$) uniformly on $x$
  varying in $I$ and $y$ in $\R$ (resp.\ uniformly on $x$ varying in $I$).
  % and $y, z$ in $\R$). \\
  More precisely, $F$ is globally Lipschitz with respect to $z$ if there
  exists some constant $k$, called the \emph{Lipschitz constant for $F$}, such
  that
  \begin{equation}
    \label{lip}
    \left|F(x,y,z)-F\left(x,y,\tilde z\right)\right|\leq k \left|z-\tilde
      z\right|,\ \forall x\in I,\ \forall y,z,\tilde z\in\R.
  \end{equation}
  Similarly we speak about the Lipschitz constant $k$ related to a function
  $F$ which is globally Lipschitz with respect
  to $(y,z)$. 
\item Analogously $F$ will be said to have \emph{linear growth}
 with respect to $z$ (resp. $(y,z)$)
  if $F$ has linear growth with respect to $z$ (resp. $(y,z)$) uniformly on $x$
  varying in $I$ and $y$ in $\R$ (resp.\ uniformly on $x$ varying in $I$).
Obvious variants will also be used without further comment.  
% and $y, z$ in $\R$). \\

\end{enumerate}

\end{defi}

\begin{prop} \label{P34}
  % Let $a \ge 0$.
  Suppose that $F: \R^3 \rightarrow \R$, so that $(x,y,z)\mapsto F(x,y,z)$
  restricted to $K \times \R^2$, for any compact interval $K$, is Lipschitz
  with respect to $(y,z)$.  Then there is a unique solution of
  \begin{equation}\label{PDEinit}
    \begin{aligned}
      Lu&=F\left(x,u(x),u'(x)\right),\qquad x\in\R,\\
      u(0)&=x_0,\\
      u'(0)&=x_1.
    \end{aligned}
  \end{equation}
\end{prop}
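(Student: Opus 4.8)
The plan is to recast the initial value problem \eqref{PDEinit} as a fixed point equation through the linear theory of Proposition~\ref{R2.8}, to solve that equation by a contraction argument on intervals $[-T,T]$ of arbitrary length (the length being neutralised by an exponential weight in the norm), and to obtain both existence and uniqueness from Banach's fixed point theorem together with a gluing step. Fix $T>0$, let $E_T:=C^0([-T,T];\R^2)$, and for $(u,v)\in E_T$ put $h_{u,v}(x):=F(x,u(x),v(x))$, continuous on $[-T,T]$. Define
\[
  \mathcal T(u,v)(x):=\left(x_0+\int_0^x v(y)\,dy,\ \ e^{-\Sigma(x)}\left(2\int_0^x e^{\Sigma(y)}\frac{h_{u,v}(y)}{\sigma^2(y)}\,dy+x_1\right)\right),\qquad |x|\le T .
\]
Since $\Sigma\in C^0$ and $\sigma\in C^0$ with $\sigma>0$, this maps $E_T$ into itself, and by the explicit formula \eqref{E2.11} a pair $(u,v)$ is a fixed point of $\mathcal T$ if and only if $v=u'$ and $u$ is the restriction to $[-T,T]$ of the (unique) $C^1$-solution on $\R$ of $Lw=\tilde h$, with $w(0)=x_0$, $w'(0)=x_1$, where $\tilde h$ is any continuous extension of $h_{u,u'}$ to $\R$ (formula \eqref{E2.11} on $[-T,T]$ depends only on $h_{u,u'}$, so the choice of extension is irrelevant); in particular such a $u$ lies in $C^1([-T,T])$.

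For the contraction, continuity of $\Sigma$ and continuity and positivity of $\sigma$ yield a finite constant $C_T$ with $2e^{\Sigma(y)-\Sigma(x)}/\sigma^2(y)\le C_T$ for all $|x|,|y|\le T$, while the hypothesis provides a Lipschitz constant $k_T$ for $F$ in $(y,z)$, uniform over $x\in[-T,T]$. Endow $E_T$ with $\|(u,v)\|_\lambda:=\sup_{|x|\le T}e^{-\lambda|x|}\bigl(|u(x)|+|v(x)|\bigr)$. Estimating the two components of $\mathcal T(u_1,v_1)-\mathcal T(u_2,v_2)$ separately, with $|h_{u_1,v_1}(y)-h_{u_2,v_2}(y)|\le k_T(|u_1(y)-u_2(y)|+|v_1(y)-v_2(y)|)$ and the elementary bound $\bigl|\int_0^x e^{\lambda|y|}\,dy\bigr|\le e^{\lambda|x|}/\lambda$, one finds that $\mathcal T$ is Lipschitz on $(E_T,\|\cdot\|_\lambda)$ with constant at most $(1+C_Tk_T)/\lambda$. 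Choosing $\lambda>1+C_Tk_T$ makes $\mathcal T$ a contraction, so it has a unique fixed point $(u_T,u_T')$ in $E_T$.

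It remains to glue. By uniqueness on each interval, for $T'>T$ the restriction of $u_{T'}$ to $[-T,T]$ is again a fixed point of $\mathcal T$ in $E_T$, hence equals $u_T$; the $u_T$ therefore patch to a single $u\in C^1(\R)$ with $u(0)=x_0$ and $u'(0)=x_1$. Setting $h(x):=F(x,u(x),u'(x))$, which is continuous on all of $\R$, formula \eqref{E2.11} holds with $l=h$ on every $[-T,T]$, hence on $\R$, so Proposition~\ref{R2.8} identifies $u$ with the $C^1$-solution of $Lu=h$; by Definition~\ref{strongsolnonl} this says precisely that $u$ is a $C^1$-solution of \eqref{pdenonlinear} with the prescribed initial data. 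For uniqueness, any $C^1$-solution $\tilde u$ of \eqref{PDEinit} has $l:=F(\cdot,\tilde u,\tilde u')\in C^0(\R)$, so by \eqref{E2.11} the pair $(\tilde u,\tilde u')$ restricted to $[-T,T]$ is a fixed point of $\mathcal T$ in $E_T$, hence equals $(u_T,u_T')$ for every $T$, forcing $\tilde u=u$. I expect the main difficulty to be organisational rather than analytical: setting up the fixed point in the pair $(u,u')$ so that \eqref{E2.11} makes the equivalence with \eqref{PDEinit} transparent, and handling the globalisation. No a priori blow-up estimate is needed, since the hypothesis already yields linear growth of $F$ in $(y,z)$ uniform in $x$ on compacts, which is exactly what the weighted norm converts into a contraction on arbitrarily long intervals.
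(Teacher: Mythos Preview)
Your argument is correct and follows essentially the same route as the paper: reduce \eqref{PDEinit} via Proposition~\ref{R2.8} to the integral equation \eqref{eq:strongsol}, solve it on each $[-T,T]$ by a Banach fixed-point argument with an exponentially weighted sup-norm, and glue by uniqueness. The only cosmetic differences are that the paper works directly in $C^1([-N,N])$ with the single unknown $f$ (rather than the pair $(u,v)\in C^0([-T,T];\R^2)$) and uses the weight $e^{\Sigma(x)-\lambda|x|}$ instead of your $e^{-\lambda|x|}$, which leads to a slightly different but equally innocuous contraction constant.
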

\begin{proof}
  % Without restriction of generality we set $ a = 0$.
  By Proposition~\ref{R2.8}, $u:\R\to\R$ of class $C^1$ is a $C^1$-solution if
  and only if
  \begin{equation}\label{eq:strongsol}
    \begin{aligned}
      u'\left(x\right)&=e^{-\Sigma\left(x\right)}\left(2\int_0^x\frac{e^{\Sigma\left(y\right)}}{\sigma^2\left(y\right)}F\left(y,u\left(y\right),u'\left(y\right)\right)dy+x_1\right),\quad\forall x\in\R,\\
      u(0)&=x_0.
    \end{aligned}
  \end{equation}
  % In the sequel of the proof, since \eqref{eq:strongsol} depends on
  % $N$, we will often
  % denote it by \eqref{eq:strongsol}$(N)$.
  We can reduce the well-posedness of \eqref{eq:strongsol} to the
  well-posedness of
  \begin{equation}\label{eq:pn}
    \begin{aligned}
      u'\left(x\right)&=e^{-\Sigma\left(x\right)}\left(2\int_0^x\frac{e^{\Sigma\left(y\right)}}{\sigma^2\left(y\right)}F\left(y,u\left(y\right),u'\left(y\right)\right)dy+x_1\right),\forall x\in[-N,N],\\
      u(0)&=x_0,
    \end{aligned}
  \end{equation}
  for every $N\in\N^*:= \N - \{0\}$.  In the sequel of the proof, since \eqref{eq:pn}
  depends on $N$, we will often denote it by \eqref{eq:pn}$(N)$.

  Indeed, if $u_N$ is a solution of \eqref{eq:pn}$(N)$, then any solution of
  \eqref{eq:pn}$(N+1)$, restricted to $[-N,N]$ is a solution
  \eqref{eq:pn}$(N)$.  In this way the existence of a solution of
  \eqref{eq:strongsol} is equivalent to the existence of a family
  $\left(u_N\right)$ of functions which are respectively solutions of
  \eqref{eq:pn}$(N)$.  In the sequel we fix $N\in \mathbb N^*$ and we study
  existence and uniqueness for \eqref{eq:pn}$(N)$, which is an ODE in a
  compact interval. We consider the map $T:C^1\left([-N,N]\right)\to
  C^1\left(\left[-N,N\right]\right)$ defined by
    \begin{align*}
      Tf(0)&=x_0\\
      \left(Tf\right)'(x)&=e^{-\Sigma(x)}\left(2\int_0^x\frac{e^{\Sigma(y)}}{\sigma^2(y)}F\left(y,f(y),f'(y)
        \right) dy + x_1 \right).
    \end{align*}
  Clearly a function $u\in C^1\left([-N,N]\right)$ is a solution of
  \eqref{eq:pn}$(N)$ if and only if $Tu=u$.  $C^1\left([-N,N]\right)$ is a
  Banach space equipped with the norm
  \begin{equation*}
    \left\Vert f\right\Vert_{N}=\sup_{|x|\leq N}
    \left[| f(x)| + | f'(x)| \right].
    % e^{\Sigma(x)-\lambda|x|}
  \end{equation*}
  The norm $ \left\Vert \cdot \right\Vert_{N}$ is equivalent to
  \begin{equation*}
    \left\Vert f\right\Vert_{N,\lambda}=\sup_{|x|\leq N}
    \left(| f(x) | +| f'(x)|
    \right)e^{\Sigma(x)-\lambda|x|},
  \end{equation*}
  where $\lambda > 0$ will be suitably chosen later.  It remains to show that
  $T$ admits a unique fixed point.  For this we will show that $T$ is a
  contraction with respect to $\left\Vert\cdot\right\Vert_{N,\lambda}$. Let
  $u,v\in C^1\left([-N,N]\right)$.  Let us denote by $K/2$ a Lipschitz
  constant for $F$.  We get
    \begin{align*}
      &\left|(Tu-Tv)'(x)e^{\Sigma(x)}\right|\\
      &\le 2\left|\int_0^x\left|F\left(y,u(y),u'(y)\right)-
          F\left(y,v(y),v'(y)\right)\right|\frac{e^{\Sigma(y)}}
        {\sigma^2(y)}dy\right|\\
      &\leq K\sup_{|z|\leq N} \frac{1}{\sigma^2(z)}
      \left | \int_0^x\left(\left| u'(y)-v'(y)\right|+\left| u(y)-v(y)\right|\right)e^{\Sigma(y)}dy\right | \\
      &\leq K\sup_{|z|\leq
        N}\frac{1}{\sigma^2(z)}\left|\int_0^xe^{\lambda|y|}dy\right|\left\Vert
        u-v\right\Vert_{N,\lambda}\\
      &=K\sup_{|z|\leq
        N}\frac{1}{\sigma^2(z)}\frac{e^{\lambda|x|}-1}{\lambda}\left\Vert
        u-v\right\Vert_{N,\lambda}.
    \end{align*}
  This implies that, for every $x \in [-N,N]$,
  \begin{equation}\label{E4}
    \left|\left(Tu-Tv\right)'(x)\right| e^{\Sigma(x)-\lambda|x|}\leq\frac{K}{\lambda}\sup_{|z|\leq N}\frac{1}{\sigma^2(z)}\left\Vert u-v\right\Vert_{N,\lambda}.
  \end{equation}
  On the other hand, since $(Tu)(0)=(Tv)(0)=x_0$ we have
    \begin{align*}
      \left|(Tu-Tv)(x)\right|&\leq\left|\int_0^x\left|\left(Tu-Tv\right)'(y)\right|dy\right|\\
      &=\left|\int_0^xe^{\Sigma(y)-\lambda|y|}\left|\left(Tu-Tv\right)'(y)\right| e^{-\Sigma(y)+\lambda|y|}dy\right|\\
      &\leq\sup_{|s|\leq N}e^{-\Sigma(s)}\sup_{|y|\leq
        N}\left(e^{\Sigma(y)-\lambda|y|}\left|(Tu-Tv)'(y)\right|\right)\frac{e^{\lambda|x|}-1}{\lambda}.
    \end{align*}
  Finally, taking into account \eqref{E4}, we get
  \begin{equation}\label{E5}
    e^{\Sigma(x)-\lambda|x|}\left|(Tu-Tv)(x)\right|\leq\frac{K}{\lambda^2}\sup_{|s|\leq N}e^{-\Sigma(s)}\sup_{|y|\leq N}e^{\Sigma(y)}\sup_{|z|\leq N}\frac{1}{\sigma^2(z)}\left\Vert u-v\right\Vert_{N,\lambda}.
  \end{equation}
  Summing up \eqref{E4} and \eqref{E5} we get
  \begin{equation}\label{E6}
    \left\Vert Tu-Tv\right\Vert_N\leq C(\lambda)\left\Vert u-v\right\Vert_{N,\lambda},
  \end{equation}
  where
  \begin{equation*}
    C(\lambda)=\frac{K}{\lambda}\sup_{|z|\leq N}\frac{1}{\sigma^2(z)}+\frac{K}{\lambda^2}\sup_{s\leq N}e^{-\Sigma(s)}\sup_{|y|\leq N}e^{\Sigma(y)}\sup_{|x|\leq N}\frac{1}{\sigma^2(x)}.
  \end{equation*}
  If $C(\lambda)<1$, \eqref{E6} has shown that $T$ is a contraction. The
  condition can be fulfilled by choosing $\lambda$ sufficiently large.
\end{proof}

\subsection{The semi-linear case with boundary conditions}
\label{sec:ellipt-probl-with}
\begin{defi}
  \label{nlb}
  Let
  \begin{enumerate}[i)]
  \item $a,b\in\R$, such that $0<a<b<\infty$,
    % \item $D\subset\R$, such that $[a,b]\subset D$,
  \item $A,B\in\R$, and
  \item $F:[a,b]\times\R^2\to\R$ be a continuous function.
  \end{enumerate}

  We say that $u:[a,b]\to\R$ of class $C^1([a,b])$ is a \emph{solution of the
    boundary value problem}

  \begin{equation}
    \label{slb}
    \left\{
      \begin{aligned}
        Lu(x)&=F(x,u,u'),\\
        u(a)&=A,\\
        u(b)&=B,
      \end{aligned}
    \right.
  \end{equation}
  if $u$ is a solution of the boundary value problem
  \begin{equation*}
    \left\{
      \begin{aligned}
        Lu&=\ell,\\
        u(a)&=A,\\
        u(b)&=B,
      \end{aligned}
    \right.
  \end{equation*}
  in the sense of Definition~\ref{DLP1} with $\ell:[a,b]\to\R$ defined by
  $\ell(x)=F\left(x,u(x),u'(x)\right)$.
\end{defi}
In Section~\ref{S5} we will observe that solving \eqref{slb} is strongly
related to the problem of solving BSDEs with random terminal time.
% \begin{defi}
%   \label{dsys}
%   Let $a,b,A,B\in\R$, $-\infty<a<b<\infty$, and $f_1, f_2: [a,b] \to\R$,
%   continuous. We say that $u_1,u_2\in C^1\left([a,b],\R\right)$ solve {\it
%   strictly} the boundary value problem
%   \begin{equation}
% \label{sys}
% \left\{
% \begin{aligned}
%   u_1'(x)&=f_1(x,u_1(x),u_2(x)),\\
%   u_2'(x)&=f_2(x,u_1(x),u_2(x)),\\
%   u_1(a)&=A,\\
%   u_1(b)&=B.
% \end{aligned}
% \right.
% \end{equation}
%%% OLD
% if there are continuous extensions $\tilde\ell_1,\tilde\ell_2:\R\to\R$ and
% functions $\tilde u_1,\tilde u_2\in C^1\left(\R,\R\right)$ fulfilling
% \begin{subequations}
%   \begin{align}
%     \tilde\ell_{1,2}|_{[a,b]}(x)&=f_{1,2}\left(x,u_1(x),u_2(x)\right),\\
%     \tilde u_{1,2}|_{[a,b]}(x)&=u_{1,2}(x),
%   \end{align}
% \end{subequations}
% such that
% \begin{align}
%   \tilde u_1'(x)&=\tilde\ell_1(x)\ \forall x\in\R,\\
%   \tilde u_2'(x)&=\tilde\ell_2(x)\ \forall x\in\R,\\
%   \tilde u_1(a)&=A,\\
%   \tilde u_1(b)&=B.
% \end{align}
% \end{defi}
\begin{lemma}
  \label{lbv}
  Suppose that the assumptions of Definition~\ref{nlb} are fulfilled. Then,
  $u$ is a solution of the boundary value problem
  \begin{equation}
    \label{bp}
    \left\{
      \begin{aligned}
        Lu(x)&=F(x,u,u'),\\
        u(a)&=A\\
        u(b)&=B
      \end{aligned}
    \right.
  \end{equation}
  if and only if the functions $u_1, u_2:[a,b] \rightarrow \R$, given by
  \begin{align*}
    u_1&=u,\\
    u_2&=e^\Sigma u',
  \end{align*}
  belong to $C^1([a,b])$ and fulfill
  % the boundary value problem
  \begin{equation}
    \label{bp1}
    \begin{aligned}
      u_1'(x)&=e^{-\Sigma(x)}u_2(x),\\
      u_2'(x)&=2\frac{e^{\Sigma(x)}}{\sigma^2(x)}F\left(x,u_1(x),e^{-\Sigma(x)}u_2(x)\right),\\
      u_1(a)&=A,\\
      u_1(b)&=B.\\
    \end{aligned}
  \end{equation}
  % in the sense of Definition~\ref{dsys}.
\end{lemma}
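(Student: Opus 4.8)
The plan is to strip the statement down to its linear core and then apply Proposition~\ref{R2.8}, which already supplies the explicit primitive for $u'$. By Definition~\ref{nlb}, setting $\ell(x):=F\bigl(x,u(x),u'(x)\bigr)$, the function $u$ solves \eqref{bp} if and only if $u$ solves the \emph{linear} boundary value problem $Lu=\ell$, $u(a)=A$, $u(b)=B$ in the sense of Definition~\ref{DLP1}. Hence the whole lemma reduces to an intrinsic description, living on $[a,b]$ only, of what it means to solve this linear problem for a given continuous right-hand side. Throughout, $u\in C^1([a,b])$ (without this $u_2=e^{\Sigma}u'$ is not even defined), so $u_1=u\in C^1([a,b])$ automatically.

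The key intermediate step I would isolate is: a function $u\in C^1([a,b])$ with $u(a)=A$, $u(b)=B$ solves the linear boundary value problem $Lu=\ell$ (in the sense of Definition~\ref{DLP1}) if and only if $e^{\Sigma}u'\in C^1([a,b])$ and
\begin{equation*}
  \bigl(e^{\Sigma}u'\bigr)'(x)=2\,\frac{e^{\Sigma(x)}}{\sigma^2(x)}\,\ell(x),\qquad x\in[a,b].
\end{equation*}
For the ``only if'' direction, Definition~\ref{DLP1} furnishes a continuous extension $\tilde\ell\in C^0$ of $\ell$ and some $\tilde u\in\shd_L$ with $\tilde u|_{[a,b]}=u$, $L\tilde u=\tilde\ell$, $\tilde u(a)=A$, $\tilde u(b)=B$; then formula \eqref{E2.11} of Proposition~\ref{R2.8} yields
\[
  e^{\Sigma(x)}u'(x)=2\int_a^x e^{\Sigma(y)}\,\frac{\ell(y)}{\sigma^2(y)}\,dy+e^{\Sigma(a)}u'(a),\qquad x\in[a,b],
\]
whose right-hand side is $C^1$ because $\ell$ is continuous, and differentiating gives the displayed identity. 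For the ``if'' direction I would pick an arbitrary continuous extension $\tilde\ell\in C^0$ of $\ell$ and, via Proposition~\ref{R2.8}, produce the unique $\tilde u\in\shd_L$ with $L\tilde u=\tilde\ell$, $\tilde u(a)=A$ and $\tilde u'(a)=u'(a)$; then $u'$ and $\tilde u'$ satisfy on $[a,b]$ the same first-order integral equation with the same value at $a$, so $u'=\tilde u'|_{[a,b]}$, and since $u(a)=\tilde u(a)$ we conclude $\tilde u|_{[a,b]}=u$, i.e.\ $u$ solves the linear boundary value problem. This passage — and in particular keeping track of the base point of $\Sigma$, hence of the constant $e^{\Sigma(a)}u'(a)=u_2(a)$ in the integral formula — is the only point requiring a little care; the rest is bookkeeping.

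Finally I would transcribe the intermediate step using $u_1:=u$, $u_2:=e^{\Sigma}u'$. The first line of \eqref{bp1}, namely $u_1'=e^{-\Sigma}u_2$, is just the defining relation $u_2=e^{\Sigma}u'$ rewritten, and it makes ``$u_1,u_2\in C^1([a,b])$'' equivalent to ``$e^{\Sigma}u'\in C^1([a,b])$''. Substituting $\ell(x)=F\bigl(x,u_1(x),e^{-\Sigma(x)}u_2(x)\bigr)$ turns the displayed identity of the intermediate step into the second line of \eqref{bp1}, while $u(a)=A$, $u(b)=B$ become $u_1(a)=A$, $u_1(b)=B$. Reading all equivalences in both directions gives exactly the characterization \eqref{bp1}, completing the argument.
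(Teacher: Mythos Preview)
Your argument is correct and follows essentially the same route as the paper: both directions rest on Definition~\ref{DLP1} to pass to extensions and on the explicit primitive \eqref{E2.11} from Proposition~\ref{R2.8}, with the converse handled by manufacturing $\tilde u$ from a chosen extension $\tilde\ell$ and matching initial data at $a$. Your isolation of the intermediate characterization $(e^{\Sigma}u')'=2e^{\Sigma}\sigma^{-2}\ell$ is a tidy repackaging, but the underlying steps coincide with the paper's proof.
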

\begin{proof}
  Let $u$ be a solution of the boundary value problem \eqref{bp}. This means,
  by Definition~\ref{nlb}, that $u$ is a solution of the boundary value
  problem
  \begin{equation*}
    \left\{
      \begin{aligned}
        Lu&=\ell,\\
        u(a)&=A,\\
        u(b)&=B,
      \end{aligned}
    \right.
  \end{equation*}
  with
  \begin{equation*}
    \ell(x)=F\left(x,u(x),u'(x)\right),
  \end{equation*}
  in the sense of Definition~\ref{DLP1}.  By that definition,
  % ~\ref{DLP1},
  there are continuous extensions $\tilde u$ and $\tilde\ell$ such that
  \begin{align*}
    \tilde u|_{[a,b]}&=u,\\
    \tilde\ell|_{[a,b]}&=\ell,
  \end{align*}
  and
  \begin{equation*}
    L\tilde u =\tilde\ell
  \end{equation*}
  in the sense of Definition~\ref{D31}. Since $\tilde u\in C^1$, we can define
  \begin{equation*}
    x_a:=\tilde u'(a).
  \end{equation*}
  By Proposition~\ref{R2.8} it follows that
  \begin{equation*}
    \tilde u'(x)=e^{-\Sigma(x)}\left(2\int_a^x\frac{e^{\Sigma(y)}}{\sigma^2(y)}\tilde\ell(y)dy+x_a\right),\ \forall x\in\R.
  \end{equation*}
  By setting $\tilde \ell_1 , \tilde \ell_2, \tilde u_1, \tilde u_2: \R
  \rightarrow \R$ as
  \begin{align*}
    \tilde \ell_1&:=\tilde u',\\
    \tilde \ell_2&:=2\frac{e^\Sigma}{\sigma^2}\tilde\ell, \\
    \tilde u_1 &= \tilde u, \\
    \tilde u_2 &= \tilde u' e^\Sigma,
  \end{align*}
  it yields that $\tilde u^1, \tilde u^2$ belong to $C^1$ and
  \begin{align*}
    \tilde u_1'(x)&=\tilde\ell_1(x)\ \forall x\in\R,\\
    \tilde u_2'(x)&=\tilde\ell_2(x)\ \forall x\in\R,\\
    \tilde u_1(a)&=A,\\
    \tilde u_1(b)&=B.
  \end{align*}
  It follows now
  % by Definition~\ref{dsys}, that
  that $u_1,u_2\in C^1\left([a,b],\R\right)$, which are respectively
  restrictions of $\tilde u_1, \tilde u_2$, solve \eqref{bp1}.

  Concerning the converse, let $u_1,u_2\in C^1\left([a,b],\R]\right)$, so that
  \eqref{bp1} is fulfilled. We define $\tilde\ell_2: \R \rightarrow \R$ as
  \begin{equation*}
    \tilde\ell_2(x)=2\frac{e^{\Sigma(x)}}{\sigma^2(x)}\tilde\ell(x),
  \end{equation*}
  where $\tilde\ell:\R\to\R$ is a continuous extension of
  \begin{equation*}
    \ell(x)=F\left(x,u_1(x),u_2(x)e^{-\Sigma(x)}\right).
  \end{equation*}
  By \eqref{bp1}, we have for some $x_a\in\R$
  \begin{equation} \label{E340}
    u_2(x)=2\int_a^x\frac{e^{\Sigma(y)}}{\sigma^2(y)}\tilde\ell(y)dy+x_a,
  \end{equation}
  for $x \in [a,b]$.  We define $\tilde u_2: \R \rightarrow \R$ as the
  right-hand side of \eqref{E340} for all $x\in\R$. Clearly $\tilde u_2$ is a
  $C^1$ extension of $u_2$.  We also define
  \begin{equation*}
    \tilde\ell_1(x)=e^{-\Sigma(x)}\tilde u_2(x),\ \ x \in \R.
  \end{equation*}
  \eqref{bp1} gives
  \begin{equation*}
    u_1'(x) =   \tilde\ell_1(x)=e^{-\Sigma(x)}\tilde u_2(x),\ \ x\in[a,b].
  \end{equation*}
  We define $\tilde u_1 (x) = \int_a^x \tilde \ell_1(y) dy+A,\ x \in \R$.
  $\tilde u_1$ is a $C^1$ extension of $u_1$.  Consequently, setting $\tilde
  u=\tilde u_1$, we get
    \begin{align*}
      \tilde u'(x)&= \tilde u_1'(x) = \tilde \ell_1(x) = e^{-\Sigma(x)} \tilde
      u_2(x)=
      e^{-\Sigma(x)}\left(2\int_a^x\frac{e^{\Sigma(y)}}{\sigma^2(y)}\tilde\ell(y)dy+x_a\right),\\
      \tilde u(a)&=A,
    \end{align*}
  taking into account \eqref{E340} and the consideration below it. We define
  $u:[a,b] \rightarrow \R$ as restriction of $\tilde u$ and get
  \begin{align*}
    u(a) &= \tilde u(a)=A,\\
    u(b) &= \tilde u(b)=\tilde u_1(b)=u_1(b) =B,
  \end{align*}
  by \eqref{bp1}.  By Proposition~\ref{R2.8}, Definition~\ref{DLP1} and
  Definition~\ref{strongsolnonl}, $u$ is a solution to the boundary value
  problem \eqref{bp}.
\end{proof}
% \subsection{The elliptic problem with boundary conditions}
% \label{sec:ellipt-probl-with}

% \begin{prop}[Existence of solutions]
%   \label{pdex}
%   Let $F$ fulfill the conditions of Proposition~\ref{P34}, and let $F$ be
%   bounded. Then, for any $A,B\in\R$, there exists a solution of the boundary
%   value problem \eqref{slb}.
% \end{prop}
% \begin{proof}
%   We define a real function $\Phi:\R\to\R$ in the following way: for
%   $x_1\in\R$ we consider the solution $u^{x_1}$ of \eqref{PDEinit} with
%   $x_0=u_0$, given by \eqref{eq:strongsol}.  This is possible by Proposition
%   \ref{P34} since $F$ is Lipschitz. We set $\Phi(x_1)=u^{x_1}(1)$. Since
%   $\Sigma$, $F$ and $\sigma$ are continuous, $\Phi$ can shown to be
%   continuous, we leave this to the reader.  We have then the following
%   relation:
%   \begin{equation}
%  \label{eq:5}
%  \Phi(x_1)-x_0=\int_0^1dxe^{-\Sigma(x)}\left(2\int_0^x\frac{e^{\Sigma(y)}}{\sigma^2(y)}
%    F\left(y,u^{x_1}(y),\left(u^{x_1}\right)'(y)\right)dy+x_1\right).
%\end{equation}
%LUKAS: BITTE REFERENZ ODER EVENTUELL ALLGEMEINE LOESUNG.
%
%Since $F$ is bounded,
%\begin{equation}
%  \label{eq:6}
%  \lim_{x_1\to\infty}\Phi(x_1)=\infty=-\lim_{x_1\to -\infty}\Phi(x_1).
%\end{equation}
%Consequently, by mean value theorem, for each $u_1\in\R$, there is an $x_1$
% so that $\Phi(x_1)=u_1$.
% \end{proof}
% \begin{rem}
%   If $F$ is not bounded, one cannot ensure existence in general. For
%   example, the boundary value problem
%   \begin{equation}
%\begin{aligned}
%  u''&=-\pi^2 u,\\
%  u(0)&=0\\
%  u(1)&=1
%\end{aligned}
%\end{equation}
%has no solution.
% \end{rem}
The following result provides uniqueness under some monotonicity conditions.
\begin{prop}
  \label{Mo}
  Let
    \begin{align*}
      F:[a,b]\times\R^2&\to\R,\\
      (x,y,z)&\mapsto F(x,y,z),
    \end{align*}
  be a continuous function fulfilling the following assumptions.
  \begin{enumerate}
  \item $F$ is non-decreasing in $y$, i.\,e.
    \begin{align}
      \label{mono}\left(F(x,y,z)-F(x,\tilde y,z)\right)\left(y-\tilde
        y\right)\geq 0,\ \forall y,\tilde y, z\in\R,\ x \in [a,b].
    \end{align}
  \item $F$ is globally Lipschitz (with respect to $z$).
    % uniformly on $x$ varying in $[0,1]$ and $y$ in $\R$.  More
    % precisely,
    % there exists some constant $b$, such that
    % \begin{equation}
    %   \label{lip}
    %   \left|F(x,y,z)-F\left(x,y,\tilde z\right)\right|\leq b\left|z-\tilde
    %     z\right|,\ \forall x\in[0,1],\ \forall y,z,\tilde z\in\R.
    % \end{equation}
  \end{enumerate}
  Then, for any $A,B\in\R$, the boundary value problem
  \begin{equation}
    \label{bvp}
    \left\{
      \begin{aligned}
        Lu(x)&=F\left(x,u,u'\right),\\
        u(a)&=A,\\
        u(b)&=B,
      \end{aligned}
    \right.
  \end{equation}
  has at most one $C^1$-solution.
\end{prop}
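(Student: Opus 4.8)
The plan is to use Lemma~\ref{lbv} to pass to the associated first order system, and then to eliminate the distributional drift by a change of the independent variable (essentially composing with the scale function), so that $L$ turns into a genuine non‑degenerate second order operator on a compact interval, for which a classical weak maximum principle is available. So let $u,v$ be two $C^1([a,b])$-solutions of \eqref{bvp} and put $w:=u-v$, whence $w(a)=w(b)=0$. Applying Lemma~\ref{lbv} to $u$ and to $v$ (equivalently, using the linearity of $L$ with the representation \eqref{E2.11}), the function $\rho:=e^{\Sigma}w'$ is of class $C^1([a,b])$, we have $w'=e^{-\Sigma}\rho$, and
\[
\rho'(x)=\frac{2e^{\Sigma(x)}}{\sigma^2(x)}\bigl(\ell_u(x)-\ell_v(x)\bigr),\qquad \ell_u(x):=F(x,u(x),u'(x)),\quad \ell_v(x):=F(x,v(x),v'(x)).
\]

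Next I would introduce $s:=\int_a^x e^{-\Sigma(y)}\,dy$, which is a $C^1$-diffeomorphism of $[a,b]$ onto $[0,S]$ with $S:=\int_a^b e^{-\Sigma}$, and set $W(s):=w(x(s))$, where $x(s)$ is the inverse diffeomorphism. Since $ds/dx=e^{-\Sigma}>0$, one computes $\dot W=(e^{\Sigma}w')\circ x=\rho\circ x$ and $\ddot W=(e^{\Sigma}\rho')\circ x=\bigl(\tfrac{2e^{2\Sigma}}{\sigma^2}(\ell_u-\ell_v)\bigr)\circ x$; in particular $W\in C^2([0,S])$ with $W(0)=W(S)=0$. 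This is precisely the point at which the merely distributional drift $\beta'$ disappears and $W$ solves an ordinary, classically interpreted, second order ODE.

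For the sign analysis, decompose $\ell_u-\ell_v=P+Q$ with $P(x):=F(x,u,u')-F(x,v,u')$ and $Q(x):=F(x,v,u')-F(x,v,v')$. By \eqref{mono} one has $P(x)w(x)\ge0$, so $P\ge0$ at every point where $W>0$, while the global Lipschitz property (constant $k$) gives $|Q(x)|\le k|u'(x)-v'(x)|=ke^{-\Sigma(x)}|\rho(x)|$. Reading everything as a function of $s$ through $x(s)$, this means that on $\{W>0\}$ we have $\ddot W=\tfrac{2e^{2\Sigma}}{\sigma^2}P+\tfrac{2e^{2\Sigma}}{\sigma^2}Q$, where the first summand is nonnegative and the second can be written as $c(s)\dot W(s)$ for a bounded function $c$ on $[0,S]$ --- the quotient $Q/\dot W$ being bounded because $Q$ vanishes whenever $u'=v'$, i.e.\ whenever $\dot W=0$, and $|Q|\le ke^{-\Sigma}|\dot W|$ elsewhere. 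Thus $\ddot W(s)-c(s)\dot W(s)\ge0$ on $\{W>0\}$.

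Finally I would invoke the weak maximum principle for the operator $\tfrac{d^2}{ds^2}-c(s)\tfrac{d}{ds}$, which carries no zeroth order term: on each connected component $(\alpha,\beta)$ of the open set $\{W>0\}$ we have $W(\alpha)=W(\beta)=0$ and $\ddot W-c\dot W\ge0$ inside, hence $W\le0$ on $[\alpha,\beta]$, contradicting $W>0$ there. (The maximum principle follows by comparison with the barriers $\varepsilon e^{\gamma s}$ for $\gamma>\sup_{[0,S]}|c|$, letting $\varepsilon\downarrow0$; alternatively, $s\mapsto\dot W(s)\exp(-\int_\alpha^s c)$ is nondecreasing on $(\alpha,\beta)$, which already yields the contradiction since $W(\alpha)=W(\beta)=0$.) Therefore $\{W>0\}=\emptyset$, i.e.\ $u\le v$; exchanging $u$ and $v$ gives $u\ge v$, so $u\equiv v$. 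I expect the genuine difficulty to be nothing more than making a pointwise, classical argument legitimate in spite of the distributional drift, which the scale change resolves; after that, \eqref{mono} supplies the correct sign of the zeroth order contribution, and the Lipschitz assumption guarantees that the dependence of $F$ on $u'$ contributes only a bounded first order (drift) term, harmless for the maximum principle.
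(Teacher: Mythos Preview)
Your argument is correct, and it takes a genuinely different route from the paper's proof. The paper does not change variables; instead, setting $\phi=u-v$, it works directly in the $x$-variable with the auxiliary scalar quantity $\psi:=e^{\Sigma}\phi'\,\phi$. A short computation using \eqref{mono} and the Lipschitz bound in $z$ yields the one-sided differential inequality $\psi'\ge -\tfrac{2k}{\sigma^2}\,|\psi|$ on $[a,b]$ with $\psi(a)=\psi(b)=0$, from which a Gronwall-type comparison gives $\psi\ge 0$; the integral identity $\int_a^b \psi\,e^{-\Sigma}\,dx=\tfrac12[\phi^2]_a^b=0$ then forces $\psi\equiv 0$, hence $(\phi^2)'\equiv 0$ and $\phi\equiv 0$.

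Your approach trades this ``energy'' quantity for a maximum-principle argument: composing with the scale function $s=\int_a^x e^{-\Sigma}$ upgrades $w=u-v$ to a genuine $C^2$ function $W$ of $s$, and the structural assumptions translate exactly into $\ddot W-c(s)\dot W\ge 0$ on $\{W>0\}$ with $c$ bounded and no zeroth-order term, so the weak maximum principle (or your integrating-factor variant) finishes. Conceptually your proof makes very clear why monotonicity in $y$ is the right hypothesis---it supplies the sign needed for a maximum principle---and it would generalize naturally to situations where a comparison/maximum principle is available; the paper's argument is a bit more self-contained (no change of variable, no need to discuss measurability of $c$) and packages the same information into a single scalar inequality for $\psi$. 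One minor remark: your $c$ is only bounded and measurable, not continuous, so when you invoke the maximum principle you should use the barrier $\varepsilon e^{\gamma s}$ (which you mention) or the integrating-factor argument, rather than any version that presupposes continuous coefficients; both of the justifications you sketch are valid under mere boundedness of $c$.
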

\begin{proof}
  Let $u$ and $v$ in $C^1\left([a,b]\right)$ be two solutions of the boundary
  value problem \eqref{bvp} and define
  \begin{align*}
    x_a&:=u'(a),\\
    y_a&:=v'(a).
  \end{align*}
  Then, by
  % the proof
  Lemma~\ref{lbv}, we get
  \begin{subequations}
    \begin{align}
      \label{u}
      u(x)&=A+\int_a^xdze^{-\Sigma(z)}\left(2\int_a^z\frac{e^{\Sigma(y)}}{\sigma^2(y)}F\left(y,u(y),u'(y)\right)dy+x_a\right),&\forall x&\in[a,b],\\
      \label{v}
      v(x)&=A+\int_a^xdze^{-\Sigma(z)}\left(2\int_a^z\frac{e^{\Sigma(y)}}{\sigma^2(y)}F\left(y,v(y),v'(y)\right)dy+y_a\right),&\forall x&\in[a,b],\\
      u(a)&=v(a)=A,\\
      u(b)&=v(b)=B.
    \end{align}
  \end{subequations}
  Indeed, we are interested in the $C^1$-function
  \begin{align*}
    \phi:[a,b]&\to \R\\
    \phi&=u-v,
  \end{align*}
  which fulfills $\phi(a)=\phi(b)=0$.
  % \begin{subequations}
  %   \begin{align}
  %     \label{vd}
  %     \phi&:=u-v\\
  %     \label{bc}
  %     \phi(a)&=\phi(b)=0.
  %   \end{align}
  % \end{subequations}
  We consider now the $C^2$-function $\chi$, given by
  \begin{subequations}
    \begin{align}
      \label{w0}\chi(a)&=0\\
      \label{w}
      \chi'(x)&=e^{\Sigma(x)}\phi'(x),
    \end{align}
  \end{subequations}
  % \begin{align}
  %   \left(G(x,y,z)-G(x,\tilde y,z)\right)\left(y-\tilde
  %     y\right)&\geq 0,&&\forall y,\tilde y\in\R,\\
  %   \left|G(x,y,z)-G\left(x,y,\tilde z\right)\right|&\leq
  %   c\left|z-\tilde
  %     z\right|,&&\forall x\in[0,h_1],\ \forall y,z,\tilde z\in\R.
  % \end{align}
  and we define
  \begin{equation*}
    \psi:=\chi'\phi.
  \end{equation*}
  By using \eqref{w}, the monotonicity and Lipschitz conditions, we get, on
  $[a,b]$,
  \begin{multline*}
    \psi'=\chi''\phi+\chi'\phi'\geq\chi''\phi=2\frac{e^{\Sigma}}{\sigma^2}\left(F\left(x,u,u'\right)-F\left(x,v,v'\right)\right)(u-v)\\
    \geq
    2\frac{e^{\Sigma}}{\sigma^2}\left(F\left(x,\frac{u+v}{2},u'\right)-F\left(x,\frac{u+v}{2},v'\right)\right)\left(u-v\right)\geq
    -\frac{2k}{\sigma^2}\left|\chi'\phi\right|,
  \end{multline*}
  where $k$ is the Lipschitz constant.  So we get the differential inequality
  \begin{align*}
    \psi'(x)&\geq -\frac{2k}{\sigma^2(x)}\left|\psi(x)\right|,\ x\in[a,b],\\
    \psi(a)&=0,\\
    \psi(b)&=0.
  \end{align*}
  By some basic properties of differential inequalities \cite{sz} we get
  \begin{equation}
    \label{geq0}
    \psi(x)\geq 0,\ x\in[a,b].
  \end{equation}
  On the other hand,
  \begin{equation}
    \label{int}
    \int_a^b\psi(x)e^{-\Sigma(x)}dx=\int_a^b\phi'(x)\phi(x)dx=\left.\frac{\phi^2(x)}{2}\right|^b_a=0.
  \end{equation}
  Finally, combining \eqref{geq0} and \eqref{int} leads to
  \begin{equation*}
    \psi(x)=0,\ \forall x\in[a,b].
  \end{equation*}
  By definition of $\psi$ it follows that $(\phi^2)'= 0$ so that $\phi^2$ is
  constantly equal to $\phi^2(0) = 0$.

\end{proof}
We consider now a classical boundary value problem of the type considered in
\eqref{bp1}. Let $f_1, f_2:\R^3 \to\R$ be continuous and let $a,b,A,B\in\R$,
$-\infty<a<b<\infty$.  We are looking for solutions $u_1, u_2: [a,b] \to \R$
of the system
% We say that $u_1,u_2\in C^1\left([a,b],\R\right)$ solve \eqref{sys} {\it
% strictly} the boundary value problem
\begin{subequations}
  \label{sys}
  \begin{align}
    u_1'(x)&=f_1(x,u_1(x),u_2(x)),\label{sys1}\\
    u_2'(x)&=f_2(x,u_1(x),u_2(x)),\label{sys2}\\
    u_1(a)&=A,\\
    u_1(b)&=B.
  \end{align}
\end{subequations}
Theorem~2.1.1 in \cite{bernfeld} states the following.
\begin{theo}
  \label{123}
  Let
  % $f_1,f_2:\R^3\to\R$ be continuous,
  $I=]\alpha,\beta]$, $-\infty\leq\alpha<\beta<\infty$, and
  $I^0=]\alpha,\beta[$. Assume the following.
  \begin{enumerate}[i)]
  \item \label{i} For every $(x,y) \in I^0\times \R $ $z \mapsto f_1(x,y,z)$
    is an increasing function.  Moreover we suppose
    \begin{equation*}
      \lim_{z\to\pm\infty}f_1(x,y,z)=\pm\infty,
    \end{equation*}
    uniformly on compact sets in $I^0\times\R$.
  \item \label{ii} All the local solutions defined on a subinterval of $I$ of
    \eqref{sys1} and \eqref{sys2} extend to a solution on the whole interval
    $I$.
  \item \label{iii} There exists at most one solution of \eqref{sys}, for all
    $a=a_0,b=b_0\in I^0 $ and all $A=A_0,B=B_0\in\R$.
  \end{enumerate}
  Then there exists exactly one solution of \eqref{sys} if $a\in I^0$ and
  $b\in I$.
\end{theo}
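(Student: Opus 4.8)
\emph{Proof idea.} The plan is to prove the theorem by a shooting argument, treating $u_2(a)$ as a free parameter. For each $c\in\R$, consider the initial value problem given by \eqref{sys1}--\eqref{sys2} with data $u_1(a)=A$, $u_2(a)=c$. By continuity of $f_1,f_2$ and Peano's theorem this problem has a solution near $x=a$, and by hypothesis~\ref{ii} every such local solution extends to a solution on all of $I$, hence on the compact interval $[a,b]$ (note that $\alpha<a$, since $a\in I^0$, and $b\le\beta<\infty$). I also record that, since $z\mapsto f_1(x,y,z)$ is strictly increasing, the $u_2$-component of any solution is determined by its $u_1$-component: $u_2(x)$ is the unique $z$ with $f_1(x,u_1(x),z)=u_1'(x)$; this will matter for uniqueness.

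Because $f_1,f_2$ are only continuous, the initial value problem above need not be uniquely solvable, so I would work with the attainable set
\[
  R(c):=\bigl\{\,v(b)\ :\ (v,w)\text{ solves on }[a,b]\text{ the IVP with }v(a)=A,\ w(a)=c\,\bigr\}\subseteq\R .
\]
Since all these solutions exist on the compact interval $[a,b]$, the Hukuhara--Kneser theorem on solution funnels shows that each $R(c)$ is a nonempty compact interval and, more generally, that for any $c_1<c_2$ the set $\bigcup_{c\in[c_1,c_2]}R(c)$ --- the cross-section at $x=b$ of the funnel emanating from the connected compact set $\{A\}\times[c_1,c_2]$ of initial conditions --- is again a nonempty compact interval. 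Consequently $\mathcal R:=\bigcup_{c\in\R}R(c)$ is an increasing union of intervals, hence an interval.

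The core of the proof is to show $\mathcal R=\R$; concretely, that
\[
  \sup R(c)\xrightarrow[c\to+\infty]{}+\infty
  \qquad\text{and}\qquad
  \inf R(c)\xrightarrow[c\to-\infty]{}-\infty .
\]
This is where hypothesis~\ref{i} enters: any solution satisfies $v'(a)=f_1(a,A,c)$, which by uniform coercivity tends to $+\infty$ as $c\to+\infty$, so $v$ leaves $A$ with arbitrarily steep positive slope; as long as $v$ stays in a fixed compact window and $w$ stays comparably large, monotonicity plus uniform coercivity keep $v'=f_1(x,v,w)$ large and push $v$ across that window in vanishing time. The hard part --- the step I expect to be the main obstacle --- is to rule out the possibility that $w$ drops, just after $x=a$, fast enough to keep $v(b)$ bounded. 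This is exactly where hypothesis~\ref{ii} must be used again: by the blow-up alternative for $w'=f_2(x,v,w)$, an overly violent decrease of $w$ would be incompatible with the solution extending over all of $I$ --- in particular to the left of $a$, where $a\in I^0$ leaves room. Making this quantitative --- bounding the admissible decay of $w$ in terms of which the growth provided by~\ref{i} can be harvested --- and combining it with the a priori bounds on $[a,b]$ for families of solutions issued from bounded initial data (a standard consequence of~\ref{ii}, compactness of $[a,b]$, and continuity of $f_1,f_2$) would give the claim.

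Granting the claim, $\mathcal R$ is an interval unbounded above and below, so $\mathcal R=\R$; in particular $B\in R(c)$ for some $c$, which provides a solution $(u_1,u_2)$ of the IVP with the extra identity $u_1(b)=B$, i.e.\ a solution of \eqref{sys}. When $b\in I^0$, uniqueness is exactly hypothesis~\ref{iii}. When $b=\beta$, one would obtain a solution on $[a,\beta]$ by taking $b_n\uparrow\beta$ with $b_n\in I^0$, solving \eqref{sys} on each $[a,b_n]$ and passing to the limit via the a priori bounds and Arzel\`a--Ascoli; uniqueness on $[a,\beta]$ would then follow from~\ref{iii} on the subintervals $[a,b']$, $b'\in(a,\beta)$, together with the observation of the first paragraph that the $u_1$-component determines the $u_2$-component.
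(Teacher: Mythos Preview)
The paper does not prove this theorem at all: it is simply quoted from Bernfeld--Lakshmikantham, \emph{An Introduction to Nonlinear Boundary Value Problems}, Theorem~2.1.1. So there is no ``paper's own proof'' to compare against; your sketch is an attempt at an independent proof of a result the authors take from the literature. The shooting strategy you outline is indeed the standard one (and is, in broad strokes, what is done in that reference), but your write-up has two real gaps.

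First, the crucial step --- that $\sup R(c)\to+\infty$ as $c\to+\infty$ --- is not established. You correctly identify it as the obstacle and gesture at using extendability to the left of $a$ to control the drop of $w$, but this is not an argument; no mechanism is given that actually converts hypothesis~\ref{ii} into a quantitative bound on $w$ just right of $a$. In the classical treatment the hypotheses interact more delicately: one typically compares two solutions issued from the same $u_1(a)=A$ with different $u_2(a)$ values and uses the monotonicity in~\ref{i} together with the uniqueness assumption~\ref{iii} (on interior subintervals) to obtain a strict ordering of the $u_1$-components, which then forces the shooting map to be unbounded. Your sketch does not invoke~\ref{iii} for existence at all, and without it the coercivity argument does not close.

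Second, your uniqueness argument for $b=\beta$ does not work as stated. You propose to deduce uniqueness on $[a,\beta]$ from~\ref{iii} on subintervals $[a,b']$ with $b'<\beta$. But two putative solutions with $u_1(a)=v_1(a)=A$ and $u_1(\beta)=v_1(\beta)=B$ need not satisfy $u_1(b')=v_1(b')$, so~\ref{iii} on $[a,b']$ gives you nothing. The observation that $u_1$ determines $u_2$ pointwise does not help here, since the issue is precisely whether the $u_1$-components agree. One way to rescue this is to first establish the monotone ordering mentioned above (solutions with larger $u_2(a)$ have larger $u_1$ on $(a,\beta]$), which together with~\ref{iii} on interior intervals yields strict monotonicity of the shooting map $c\mapsto u_1(b)$ for every $b\in I$, including $b=\beta$; uniqueness then follows directly. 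As written, however, your argument for the endpoint case is circular.
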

Previous theorem has an important consequence at the level of existence and
uniqueness of solutions to boundary value problems.
\begin{cor} \label{CEE} Let $F:[a,b]\times \R^2 \rightarrow \R$, $(x,y,z)
  \mapsto F(x,y,z)$ be a continuous function.  We suppose the following.
  \begin{enumerate}[i)]
  \item $(x,y) \mapsto F(x, y, 0) $ has linear growth with respect 
to $y$. 
%uniformly with respect to    $x$.
    % and third variable.
  \item $F$ fulfills the monotonicity condition \eqref{mono}.
  \item $F$ is globally Lipschitz with respect to $z$.
  \end{enumerate}
  Then there exists exactly one solution to the boundary value problem
  \begin{equation}
    \label{bv}
    \left\{
      \begin{aligned}
        Lu(x)&=F(x,u,u'),\\
        u(a)&=A\\
        u(b)&=B.
      \end{aligned}
    \right.
  \end{equation}
\end{cor}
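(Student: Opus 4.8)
The plan is to recast the boundary value problem \eqref{bv} as the first order system \eqref{sys} and then to invoke Theorem~\ref{123}. Setting $u_1=u$ and $u_2=e^{\Sigma}u'$, Lemma~\ref{lbv} tells us that $u$ is a $C^1$-solution of \eqref{bv} if and only if $(u_1,u_2)$ is a $C^1$-solution of \eqref{sys} with
\begin{equation*}
  f_1(x,y,z)=e^{-\Sigma(x)}z,\qquad
  f_2(x,y,z)=2\frac{e^{\Sigma(x)}}{\sigma^2(x)}F\bigl(x,y,e^{-\Sigma(x)}z\bigr).
\end{equation*}
Since $F$ is only given on $[a,b]\times\R^2$, I would first extend it continuously to $]\alpha,b]\times\R^2$ for some $\alpha<a$, for instance by $F(x,y,z):=F(a,y,z)$ for $x<a$; this preserves continuity, the monotonicity \eqref{mono}, the global Lipschitz property in $z$ and the linear growth of $(x,y)\mapsto F(x,y,0)$. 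One then applies Theorem~\ref{123} with $I=]\alpha,b]$, so that $I^0=]\alpha,b[$, $a\in I^0$ and $b\in I$; note that a solution of \eqref{sys} on $[a,b]$ depends only on $F|_{[a,b]}$, so the extension is harmless. It remains to verify the three hypotheses of the theorem.

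Hypotheses~\ref{i} and~\ref{iii} are immediate. For~\ref{i}, $z\mapsto f_1(x,y,z)=e^{-\Sigma(x)}z$ is increasing since $e^{-\Sigma(x)}>0$, and on any compact subset of $I^0\times\R$ one has $e^{-\Sigma(x)}\ge c>0$, so that $f_1(x,y,z)\to\pm\infty$ as $z\to\pm\infty$, uniformly on such a set. For~\ref{iii}, fix $a_0,b_0\in I^0$ and $A_0,B_0\in\R$; by the analogue of Lemma~\ref{lbv} on $[a_0,b_0]$ (whose proof carries over verbatim to an arbitrary compact interval), a solution of \eqref{sys} on $[a_0,b_0]$ with $u_1(a_0)=A_0$ and $u_1(b_0)=B_0$ corresponds to a $C^1$-solution on $[a_0,b_0]$ of $Lw=F(x,w,w')$ with $w(a_0)=A_0$, $w(b_0)=B_0$; since $F$ satisfies \eqref{mono} and is globally Lipschitz in $z$, Proposition~\ref{Mo} shows there is at most one such $w$, hence at most one solution of \eqref{sys}.

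The real work is Hypothesis~\ref{ii}. Let $(u_1,u_2)$ be a local solution of \eqref{sys1}--\eqref{sys2} on a subinterval $J\subseteq I$. Writing
\begin{equation*}
  F\bigl(x,u_1,e^{-\Sigma}u_2\bigr)=F(x,u_1,0)+\Bigl(F\bigl(x,u_1,e^{-\Sigma}u_2\bigr)-F(x,u_1,0)\Bigr)
\end{equation*}
and using the linear growth of $F(\cdot,\cdot,0)$, the Lipschitz property in $z$, and the boundedness of $\Sigma$ and $1/\sigma^2$ on $I$, one obtains a constant $C$, depending only on $\alpha,b,F,\sigma,\Sigma$, such that
\begin{equation*}
  |u_1'(x)|\le C|u_2(x)|,\qquad |u_2'(x)|\le C\bigl(1+|u_1(x)|+|u_2(x)|\bigr),\qquad x\in J.
\end{equation*}
Hence $x\mapsto|u_1(x)|+|u_2(x)|$ satisfies a linear differential inequality and, by Gronwall's lemma, stays bounded on $J$ by a quantity depending only on $|J|$ and on its value at one point of $J$. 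A local solution therefore cannot blow up and extends to all of $I$, which is exactly Hypothesis~\ref{ii}.

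With the three hypotheses in hand, Theorem~\ref{123} yields exactly one solution $(u_1,u_2)$ of \eqref{sys} on $[a,b]$ with $u_1(a)=A$ and $u_1(b)=B$, and by Lemma~\ref{lbv} the function $u:=u_1$ is the unique $C^1$-solution of \eqref{bv}. The step I expect to be the main obstacle is the verification of Hypothesis~\ref{ii}: one has to produce the a priori bound that rules out finite-``time'' explosion of local solutions of \eqref{sys}, and this is exactly where the linear growth of $(x,y)\mapsto F(x,y,0)$ enters — without it the component $u_1$, that is $u$ itself, could run off to infinity, and neither the monotonicity nor the Lipschitz condition in $z$ would compensate. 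Everything else, including the passage back and forth through Lemma~\ref{lbv}, is routine bookkeeping.
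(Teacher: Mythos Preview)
Your proof is correct and follows essentially the same route as the paper: reduce to the first-order system via Lemma~\ref{lbv}, extend $F$ beyond $[a,b]$ so as to preserve the monotonicity, Lipschitz and linear-growth properties, and then verify the three hypotheses of Theorem~\ref{123}, with Proposition~\ref{Mo} supplying the uniqueness needed for hypothesis~\ref{iii} and a Gronwall argument handling non-explosion in hypothesis~\ref{ii}. The only cosmetic difference is that the paper extends $F$ symmetrically to all of $\R^3$ and works on $I=]\alpha,\beta]$ with $\alpha<a<b<\beta$, whereas you extend only to the left and take $I=]\alpha,b]$; since Theorem~\ref{123} only requires $b\in I$ (not $b\in I^0$), your one-sided extension is enough.
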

\begin{proof}
  Uniqueness follows immediately from Proposition~\ref{Mo}.  To show
  existence, we make use of Theorem~\ref{123}.  Let $\alpha < a$ and $\beta >
  b$.  We extend $F$ continuously on the entire $\R^3$ by introducing a new
  function $\tilde F$ in the following way:
  \begin{equation} \label{bv10} \tilde F(x,y,z):=
    \begin{cases}
      F(a,y,z),&x<a,\\
      F(x,y,z),&a\leq x\leq b,\\
      F(b,y,z),&x>b.
    \end{cases}
  \end{equation}
  $F$ fulfills the assumptions of Lipschitz-continuity and monotonicity, and
  so does $\tilde F$.
  % by Theorem~\ref{123},
  At this point we can show the existence of a unique solution
  $u_1,u_2:[a,b]\to\R$ of the system
  \begin{equation}
    \label{system}
    \begin{aligned}
      u_1'(x)&=e^{\Sigma(x)}u_2(x),\\
      u_2'(x)&=2\frac{e^{\Sigma(x)}}{\sigma^2(x)}\tilde F\left(x,u_1(x),e^{-\Sigma(x)}u_2(x)\right),\\
      u_1(a)&=A,\\
      u_1(b)&=B.\\
    \end{aligned}
  \end{equation}
  That coincides with \eqref{sys} setting
  \begin{align*}
    f_1(x,y,z)&=e^{\Sigma(x)}z,\\
    f_2(x,y,z)&=2\frac{e^{\Sigma(x)}}{\sigma^2(x)}\tilde
    F\left(x,y,ze^{-\Sigma(x)}\right).
  \end{align*}
  As the mentioned existence will be a consequence of Theorem~\ref{123}, we
  check the validity of its assumptions.
  % of Theorem~\ref{123}.  in the sense of Definition~\ref{dsys}.
  Clearly,~\ref{i} is fulfilled. Furthermore, by assumption, $\tilde
  F:\R^3\to\R$ is continuous and has linear growth
 with respect to $(y,z)$ i.e. the second and third
  variable.  Therefore assumption~\ref{ii} is fulfilled too.  Indeed, by
  Peano's theorem, we can continue (to the left and to the right) locally any
  solution of \eqref{system} to a possibly exploding solution.  The linear
  growth condition and Gronwall's lemma imply that no solution
  explodes. Moreover, Assumption~\ref{iii} of Theorem~\ref{123} holds. In
  fact, since $\tilde F$ fulfills the monotonicity condition \eqref{mono} and
  is globally Lipschitz in $z$, uniqueness follows from Proposition~\ref{Mo}.
  % Therefore Assumption~\ref{iii} of Theorem~\ref{123} holds and the
  % result follows.\\
  Finally, by Lemma~\ref{lbv}, $u=u_1$ is a solution of \eqref{bv}.
\end{proof}

The proposition below shows existence and uniqueness in the Lipschitz case
without the monotonicity condition.
\begin{prop}
  \label{pdex}
  Let $F: [0,1] \times \R^2 \to \R$, $(x,y,z) \mapsto F(x,y,z)$ be bounded and
  globally Lipschitz with respect to $(y,z)$.
  % $ fulfill the same conditions as in Proposition~\ref{P34}.
  % Additionally,
  % let $F$ be bounded.
  Then there exists a solution of the boundary value problem
  \begin{equation*}
    \left\{
      \begin{aligned}
        Lu(x)&=F\left(x,u,u'\right),\\
        u(0)&=A,\\
        u(1)&=B,\\
      \end{aligned}
    \right.
  \end{equation*}
  for any $A,B\in\R$.
\end{prop}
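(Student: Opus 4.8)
The plan is to rewrite the boundary value problem as a fixed point equation by means of the Green kernel of Proposition~\ref{P11}, and then to invoke Schauder's fixed point theorem; the hypothesis that $F$ is bounded is exactly what forces the relevant map to send a ball into itself. On the Banach space $C^1([0,1])$, equipped with $\|w\|:=\sup_{x\in[0,1]}(|w(x)|+|w'(x)|)$, define
\[
  T(w)(x):=f(x)+\int_0^1 K(x,y)\,F\bigl(y,w(y),w'(y)\bigr)\,dy ,
\]
with $f$ and $K$ as in \eqref{exp}. For $w\in C^1([0,1])$ the function $g_w(y):=F(y,w(y),w'(y))$ is continuous on $[0,1]$, so by Proposition~\ref{P11} $T(w)$ is precisely the unique solution of $Lu=g_w$, $u(0)=A$, $u(1)=B$; in particular $T(w)\in C^1([0,1])$, and by Definition~\ref{nlb} a function $u\in C^1([0,1])$ solves the boundary value problem if and only if $Tu=u$. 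Thus it suffices to produce a fixed point of $T$.

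First, $T$ self-maps a ball. Put $M:=\sup_{[0,1]\times\R^2}|F|$ and, using Remark~\ref{Rexp}, $C_K:=\sup_{x\in[0,1]}\int_0^1\bigl(|K(x,y)|+|\partial_xK(x,y)|\bigr)\,dy<\infty$. Then for every $w$ one has $\|T(w)\|\le\|f\|+M\,C_K=:R$, so $T$ maps the closed ball $\bar B_R\subset C^1([0,1])$ — a nonempty closed bounded convex set — into itself. Moreover $T$ is continuous: if $w_n\to w$ in $C^1([0,1])$, the Lipschitz assumption on $F$ gives $\|g_{w_n}-g_w\|_\infty\le k\|w_n-w\|\to 0$, whence $\|T(w_n)-T(w)\|\le C_K\|g_{w_n}-g_w\|_\infty\to 0$.

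It remains to prove that $\overline{T(\bar B_R)}$ is compact in $C^1([0,1])$; by the Arzel\`a--Ascoli theorem this amounts to the boundedness and equicontinuity of $\{T(w):w\in\bar B_R\}$ and of $\{(Tw)':w\in\bar B_R\}$. Boundedness of both families is contained in the estimate $\|T(w)\|\le R$, and equicontinuity of the functions $T(w)$ follows from the uniform bound $\|(Tw)'\|_\infty\le R$. For the derivatives, recall from Proposition~\ref{R2.8} (with the constant $x_1$ fixed by the condition $T(w)(1)=B$, exactly as in the proof of Proposition~\ref{P11}) that
\[
  (Tw)'(x)=e^{-\Sigma(x)}\Bigl(2\int_0^x \tfrac{e^{\Sigma(y)}}{\sigma^2(y)}\,g_w(y)\,dy+x_1(w)\Bigr),\qquad x\in[0,1],
\]
where $\{x_1(w):w\in\bar B_R\}$ is bounded because its explicit formula from the proof of Proposition~\ref{P11} involves only $A,B$ and integral functionals of the bounded function $g_w$. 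The bracket is, as a function of $x\in[0,1]$, Lipschitz with constant $2M\sup_{[0,1]}(e^\Sigma/\sigma^2)$ and uniformly bounded, both uniformly in $w\in\bar B_R$; multiplying it by the fixed, bounded and uniformly continuous function $e^{-\Sigma}$ then shows that $\{(Tw)'\}$ is equicontinuous, uniformly in $w$. Hence $\overline{T(\bar B_R)}$ is compact, and Schauder's fixed point theorem yields $u\in\bar B_R$ with $Tu=u$, i.e.\ a solution of the boundary value problem.

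The main point is precisely this compactness step: the mere membership $\partial_xK\in L^\infty([0,1]^2)$ does not by itself make $x\mapsto\int_0^1\partial_xK(x,y)g_w(y)\,dy$ equicontinuous uniformly in $w$, so one has to exploit the factorization $(Tw)'=e^{-\Sigma}\cdot(\text{something uniformly Lipschitz in }x)$. Note also that, unlike in Proposition~\ref{P34}, a Banach fixed point argument is not available here: with boundary rather than initial conditions the exponential reweighting that turned the analogous map into a contraction is no longer at hand, and the product $C_K\,k$ of the kernel norm with the Lipschitz constant of $F$ need not be less than $1$.
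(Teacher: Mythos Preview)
Your proof is correct, but the paper takes a quite different and more elementary route: a shooting argument. The paper extends $F$ to the whole line, invokes Proposition~\ref{P34} to obtain for each $x_1\in\R$ the unique solution $u^{x_1}$ of the initial value problem $Lu=F(x,u,u')$, $u(0)=A$, $u'(0)=x_1$, and then studies the continuous map $\Phi(x_1):=u^{x_1}(1)$. Boundedness of $F$ in the integral formula \eqref{eq:strongsol} forces $\Phi(x_1)\to\pm\infty$ as $x_1\to\pm\infty$, so the intermediate value theorem gives an $x_1$ with $\Phi(x_1)=B$.

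The shooting proof is shorter and uses nothing beyond the intermediate value theorem, but it leans on the one-dimensional structure (a single shooting parameter, monotonicity at infinity). Your Schauder argument via the Green kernel of Proposition~\ref{P11} is heavier---it needs Arzel\`a--Ascoli and the explicit factorization $(Tw)'=e^{-\Sigma}\cdot(\text{uniformly Lipschitz})$ to get equicontinuity of the derivatives---but it is the natural template for higher-dimensional analogues, and your remark that the mere bound $\partial_xK\in L^\infty$ is insufficient for that step is well taken. Your closing observation that a Banach fixed point is unavailable without the smallness condition of Proposition~\ref{FP} is also to the point; the paper's shooting method sidesteps this issue entirely.
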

\begin{proof}
  We extend $F $ to $\tilde F$ in the way of \eqref{bv10} with $a = 0$ and $b
  =1$.  Moreover, we define a real function $\Phi:\R\to\R$ in the following
  way: for $x_0=A$ and $x_1\in\R$ we denote the solution of \eqref{PDEinit} by
  $u^{x_1}$. Its existence follows from Proposition~\ref{P34} since $\tilde F$
  is globally Lipschitz with respect to $(y,z)$. Now we set
  $\Phi(x_1)=u^{x_1}(1)$. Since $\Sigma$, $F$ and $\sigma$ are continuous,
  $\Phi$ can be shown to be continuous as well. We leave this to the reader.
  By \eqref{eq:strongsol}, we get then the following relation:
  \begin{equation}
    \label{eq:5}
    \Phi(x_1)-x_0=\int_0^1dxe^{-\Sigma(x)}\left(2\int_0^x\frac{e^{\Sigma(y)}}{\sigma^2(y)}
      F\left(y,u^{x_1}(y),\left(u^{x_1}\right)'(y)\right)dy+x_1\right).
  \end{equation}
  Since $F$ is bounded,
  \begin{equation}
    \label{eq:6}
    \lim_{x_1\to\infty}\Phi(x_1)=\infty=-\lim_{x_1\to -\infty}\Phi(x_1).
  \end{equation}
  Consequently, by mean value theorem, for each $B\in\R$, there is an $x_1$ so
  that $\Phi(x_1)=B$.
\end{proof}
\begin{rem} \label{R313} If $F$ is not bounded, one cannot ensure existence in
  general. To give an example, we set $L=\frac{d^2}{dx^2}$ and
  $F(x,y,z)=-\pi^2y$. Then the corresponding boundary value problem
  \begin{equation*}
    \left\{
      \begin{aligned}
        u''&=-\pi^2 u,\\
        u(0)&=0,\\
        u(1)&=1,
      \end{aligned}
    \right.
  \end{equation*}
  has no solution.
\end{rem}
\begin{prop}
  \label{FP}
  Let $a=0$, $b=1$, and $F: [0,1] \times \R \times \R \rightarrow \R$,
  $(x,y,z)\mapsto F(x,y,z)$ be globally Lipschitz with respect to $(y,z)$ and
  Lipschitz-constant $k$, fulfilling
  \begin{equation}
    \label{CondM}
    k <\left(\sup_{x\in[0,1]}\int_0^1dy\left(\left|K(x,y)\right|+\left|\partial_xK(x,y)\right|\right)\right)^{-1},
  \end{equation}
  % which means that for any $x\in[0,1]$ and $y,\tilde y,z,\tilde
  % z\in\R$,
  % \begin{equation}
  %   \left|F(x,y,z)-F\left(x,\tilde y,\tilde
  %       z\right)\right|<M\left(\left|y-\tilde
  %       y%\right|+\left|z-\tilde z\right|\right).
  %     \end{equation}
  where $K$ was defined in \eqref{exp3}.  Then, \eqref{slb} has a unique
  solution for any $A,B\in\R$.
\end{prop}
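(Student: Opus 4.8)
The plan is to reformulate \eqref{slb} as a fixed point equation via the explicit solution of the linear boundary value problem obtained in Proposition~\ref{P11}, and then to turn the smallness condition \eqref{CondM} into a contraction estimate on $C^1([0,1])$.

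I would introduce the operator $\mathcal T\colon C^1([0,1])\to C^1([0,1])$ defined by
\[
  \mathcal T u(x):=f(x)+\int_0^1 K(x,y)\,F\!\left(y,u(y),u'(y)\right)dy,\qquad x\in[0,1],
\]
with $f$ and $K$ as in \eqref{exp2} and \eqref{exp3}. The key point is that, by Proposition~\ref{P11} together with Definition~\ref{nlb}, a function $u\in C^1([0,1])$ solves \eqref{slb} with boundary data $A,B$ if and only if $\mathcal T u=u$: for $u\in C^1([0,1])$ the function $g:=F(\cdot,u(\cdot),u'(\cdot))$ is continuous on $[0,1]$, and by Proposition~\ref{P11} the right-hand side above is precisely the unique solution of $Lw=g$, $w(0)=A$, $w(1)=B$ in the sense of Definition~\ref{DLP1}. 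Hence it suffices to show that $\mathcal T$ admits a unique fixed point.

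Before that, one must check that $\mathcal T$ is well defined, i.e.\ that $\mathcal T u\in C^1([0,1])$. The term $f$ is of class $C^1$ since $\Sigma\in C^0$. For the integral term, Remark~\ref{Rexp} guarantees that $x\mapsto K(x,y)$ is absolutely continuous and that $(x,y)\mapsto\partial_x K(x,y)$ lies in $L^\infty([0,1]^2)$. Combining Fubini's theorem with the fundamental theorem of calculus shows that $x\mapsto\int_0^1 K(x,y)g(y)\,dy$ is absolutely continuous with derivative $\int_0^1\partial_x K(x,y)g(y)\,dy$; moreover, by the explicit expression \eqref{exp3}, for each fixed $y$ the map $x\mapsto\partial_x K(x,y)$ is continuous off the diagonal $\{x=y\}$ and uniformly bounded, so dominated convergence shows that this derivative is continuous in $x$. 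Consequently $\mathcal T u\in C^1([0,1])$ and
\[
  (\mathcal T u)'(x)=f'(x)+\int_0^1\partial_x K(x,y)\,F\!\left(y,u(y),u'(y)\right)dy.
\]

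Finally I would carry out the contraction argument. Equip $C^1([0,1])$ with the Banach norm $\|w\|:=\sup_{x\in[0,1]}\left(|w(x)|+|w'(x)|\right)$. For $u,v\in C^1([0,1])$, the global Lipschitz property of $F$ with respect to $(y,z)$ with constant $k$ gives $\left|F(y,u(y),u'(y))-F(y,v(y),v'(y))\right|\le k\,\|u-v\|$ for every $y\in[0,1]$, hence, for each $x\in[0,1]$,
\[
  \left|\mathcal T u(x)-\mathcal T v(x)\right|+\left|(\mathcal T u)'(x)-(\mathcal T v)'(x)\right|\le k\left(\int_0^1\left(|K(x,y)|+|\partial_x K(x,y)|\right)dy\right)\|u-v\|.
\]
Taking the supremum over $x\in[0,1]$ and using \eqref{CondM} gives $\|\mathcal T u-\mathcal T v\|\le\kappa\,\|u-v\|$ with $\kappa:=k\,\sup_{x\in[0,1]}\int_0^1\left(|K(x,y)|+|\partial_x K(x,y)|\right)dy<1$, so $\mathcal T$ is a contraction and the Banach fixed point theorem yields a unique fixed point, which by the above is the unique solution of \eqref{slb}. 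I expect the only genuinely delicate step to be the well-definedness of $\mathcal T$, namely that $\mathcal T u$ is of class $C^1$ and that one may differentiate under the integral sign; this is precisely where Remark~\ref{Rexp} and the explicit form \eqref{exp3} of the kernel are used. The remainder is a routine contraction estimate.
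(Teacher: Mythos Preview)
Your proof is correct and follows essentially the same route as the paper: define the integral operator via the Green's kernel from Proposition~\ref{P11}, identify solutions of \eqref{slb} with its fixed points, and use \eqref{CondM} to show it is a contraction on $C^1([0,1])$ with the norm $\|w\|=\sup_x(|w(x)|+|w'(x)|)$. Your treatment is in fact slightly more careful than the paper's, since you explicitly verify that $\mathcal T$ maps into $C^1([0,1])$ and justify differentiation under the integral via Remark~\ref{Rexp}, a point the paper leaves implicit.
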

\begin{proof}
  We consider the map $T:C^1([0,1])\to C^1([0,1])$ defined by
  \begin{equation*}
    Th(x)=f(x)+\int_0^1K(x,y)F(y,h(y),h'(y))dy,
  \end{equation*}
  with $f$
  % and $K$ given
  is given by \eqref{exp2}.
  % and \eqref{exp3}.
  Taking into account Definition~\ref{nlb} and Proposition~\ref{P11},
  \eqref{slb} is well-posed if and only if $T$ has a fixed point.  We show the
  latter assertion.  $C^1([0,1])$ is a Banach space equipped with the norm
  \begin{equation*}
    \Vert h\Vert=\sup_{x\in[0,1]}\left(\left|h(x)\right|+\left|h'(x)\right|\right).
  \end{equation*}
  To show that $T$ admits a unique fixed point, we will show that $T$ is a
  contraction with respect to $\Vert\cdot\Vert$. Let $u,v\in C^1([0,1])$. We
  get
    \begin{align}
      \left|(Tu-Tv)(x)\right|&=\left|\int_0^1K(x,y)\left(F\left(y,u(y),u'(y)\right)-F\left(y,v(y),v'(y)\right)\right)dy\right|\nonumber\\
      &\leq\int_0^1dy\left|K(x,y)\right| k \left(\left|u(y)-v(y)\right|+\left|u'(y)-v'(y)\right|\right)\nonumber\\
      &\leq\int_0^1dy\left|K(x,y)\right|k \Vert u-v\Vert\label{c1}
    \end{align}
  and
    \begin{align}
      \left|\left(Tu-Tv\right)'(x)\right| &= \left|\int_0^1
        \partial_x K(x,y)\left(F\left(y,u(y),u'(y)\right)-F\left(y,v(y),v'(y)\right)\right)dy\right| \nonumber\\
      &\le \left|\int_0^1dy\partial_xK(x,y)k \left(\left|u(y)-v(y)\right|+\left|u'(y)-v'(y)\right|\right)dy\right|\nonumber\\
      &\leq \int_0^1dy\left|\partial_xK(x,y)\right|k \Vert u-v\Vert.\label{c2}
    \end{align}
  Summing up \eqref{c1} and \eqref{c2} and taking the supremum over $x$ gives
  \begin{equation*}
    \left\Vert Tu-Tv\right\Vert\leq\sup_{x\in[0,1]}\int_0^1dy\left(\left|K(x,y)\right|+\left|\partial_xK(x,y)\right|\right)k\Vert u-y\Vert.
  \end{equation*}
  It follows that $T$ is a contraction if $k$ fulfills \eqref{CondM}.
\end{proof}
\section{Exit time of the solution to the forward martingale problem}
\label{S4}
We are interested in the nature of the first exit time $\tau$ from the
interval $[0,1]$ of a solution $X=X^x$ to the martingale problem with respect
to $L$ and initial condition $x\in[0,1]$. So we define $\tau$ as
\begin{equation*}
  \tau:= \left \{\begin{aligned}
      &\inf\left\{t\geq 0\big| X_t\notin[0,1]\right\},&\text{if}\  
      &\left\{t\geq 0\big| X_t\notin[0,1]\right\}\neq\emptyset\\
      &\infty,&&\text{otherwise.}
    \end{aligned} \right.
\end{equation*}
\begin{prop}
  \label{P200} $\tau$ has finite expectation. In particular $\tau$ is finite
  almost surely.
\end{prop}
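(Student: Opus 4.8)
The plan is to exhibit a function $\varphi\in\shd_L$ with $L\varphi$ of constant sign and with bounded restriction to $[0,1]$, and then to read off a uniform bound on $\E[t\wedge\tau]$ from the decomposition of $\varphi(X)$ furnished by Proposition~\ref{P48} together with an optional-stopping argument at time $\tau$.

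Concretely, I would take $\varphi:=-v$, where $v$ is the $C^1$-solution of $Lv=1$, $v(0)=v'(0)=0$ considered in Proposition~\ref{P3.13}. By linearity of the operators $L_n$ one checks that $\varphi\in\shd_L$ and $L\varphi\equiv-1$ on $\R$. Since $X$ solves $\MP(\sigma,\beta;x)$ with $x\in[0,1]$, it is a continuous process (an $\shf^X$-Dirichlet process in the sense of Remark~\ref{RR}), so after the usual augmentation $\tau$ is an $\shf^X$-stopping time and the stopped process $X^\tau$ takes values in $[0,1]$: indeed $X_s\in[0,1]$ for $s<\tau$, and $X_\tau\in[0,1]$ because $[0,1]$ is closed and $X$ is continuous; hence $X_{t\wedge\tau}\in[0,1]$ for every $t$.

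Next I would apply Proposition~\ref{P48} to $\varphi$ and stop the resulting identity at $t\wedge\tau$. Using $L\varphi\equiv-1$ this gives
\begin{equation*}
  \varphi(X_{t\wedge\tau})=\varphi(x)+\int_0^{t\wedge\tau}\varphi'(X_s)\,dM^X_s-(t\wedge\tau).
\end{equation*}
Write $N:=\int_0^{\cdot}\varphi'(X_s)\,dM^X_s$, an $\shf^X$-local martingale. On $[0,t]$ the stopped process $N^\tau$ is bounded: $\varphi(X_{t\wedge\tau})$ is bounded by $\sup_{[0,1]}|\varphi|<\infty$ since $X^\tau$ is $[0,1]$-valued and $\varphi$ is continuous, and $t\wedge\tau\le t$, so $N_{t\wedge\tau}=\varphi(X_{t\wedge\tau})-\varphi(x)+(t\wedge\tau)$ is bounded. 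A local martingale that is bounded on a finite time interval is a true martingale there, whence $\E[N_{t\wedge\tau}]=0$. Taking expectations in the displayed identity yields
\begin{equation*}
  \E[t\wedge\tau]=\varphi(x)-\E[\varphi(X_{t\wedge\tau})]\le 2\sup_{y\in[0,1]}|\varphi(y)|<\infty,
\end{equation*}
a bound independent of $t$. Letting $t\to\infty$ and invoking monotone convergence gives $\E[\tau]\le 2\sup_{[0,1]}|\varphi|<\infty$, and in particular $\tau<\infty$ almost surely.

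The only genuinely delicate step is the vanishing of the expectation of the stochastic-integral term; this is exactly where it matters that $X^\tau$ stays inside the bounded interval $[0,1]$ and that $\varphi$ is continuous, so that $N^\tau$ is bounded on $[0,t]$ and the elementary fact ``bounded local martingale on a finite interval $=$ martingale'' applies. The construction of $\varphi$ and the measurability/continuity technicalities are routine given the material recalled in Sections~\ref{S2} and~\ref{S3}.
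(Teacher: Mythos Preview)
Your argument is correct and follows the same overall strategy as the paper: pick a function $\varphi\in\shd_L$ with $L\varphi=-1$, expand $\varphi(X)$ via Proposition~\ref{P48}, stop at $\tau$, kill the stochastic integral by a localization argument, and bound $\E[t\wedge\tau]$ uniformly in $t$. The two proofs differ in two places worth noting. First, the paper chooses $\Gamma$ solving the \emph{boundary} value problem $L\Gamma=-1$, $\Gamma(0)=\Gamma(1)=0$ (via Proposition~\ref{P11}) and extends it to $\tilde\Gamma\in\shd_L$; you instead take $\varphi=-v$ from the \emph{initial} value problem of Proposition~\ref{P3.13}, which has the pleasant feature that $L\varphi\equiv-1$ on all of $\R$, so no care is needed about what the extension does outside $[0,1]$. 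Second, the paper localizes with stopping times $\tau_n$ defined through the quadratic variation $\int_0^\cdot\sigma^2(X_s)\tilde\Gamma'(X_s)^2\,ds$ to make $N^{\tau_n}$ a square integrable martingale, whereas you observe directly that $N^\tau$ is bounded on each finite interval $[0,t]$ and invoke ``bounded local martingale $=$ martingale''; your route is shorter and avoids the auxiliary sequence. The trade-off is that the paper's choice of $\Gamma$ with vanishing boundary values delivers, as a byproduct, the exact identity $\E[\tau]=\Gamma(x)$ (Proposition~\ref{C200}), while your $\varphi=-v$ only gives the bound $\E[\tau]\le 2\sup_{[0,1]}|v|$ since $v(1)$ need not vanish.
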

\begin{proof}
  We consider $\Gamma:[0,1]\to\R$ as the unique solution of
    \begin{align*}
      L\Gamma&=-1\\
      \Gamma(0)&=\Gamma(1)=0,
    \end{align*}
  and an extension $\tilde \Gamma\in\shd_L$ as regarded in
  Definition~\ref{DLP1}. Since $X$ is a solution to the martingale problem
  with respect to $L$ and initial condition $x$, the process
  \begin{equation*}
    N_t=\tilde \Gamma(X_t)-\tilde \Gamma(x)-\int_0^tL\tilde \Gamma(X_r)dr,
  \end{equation*}
  is a local martingale.
  % Taking into account Remark~\ref{RR} and
  % Remark \ref{R1.2 \ref{1.2b}),
  By Proposition~\ref{P48} we have $N_t = \int_0^t \tilde \Gamma'(X_s)
  dM^X_s$, which, by Remark~\ref{RR} \ref{RR1}, implies that
  \begin{equation*} [N]_t=\int_0^t\sigma^2(X_s)\tilde \Gamma'(X_s)^2ds.
  \end{equation*}
  Now, let $\left(\tau_n\right)$ be the family of stopping times defined as
  \begin{equation*}
    \tau_n:=\inf\left\{t\geq 0\left|\int_0^t\right.\sigma^2(X_s)\tilde
      \Gamma'(X_s)^2ds\geq n\right\},
  \end{equation*}
  with the assumption that $\inf\left(\emptyset\right)=\infty$.

  The stopped processes $N^{\tau_n}$ are clearly square integrable
  martingales. By Doob's stopping theorem for martingales, the processes
  $\left(N^{\tau_n}_{t\wedge\tau}\right)_{t\geq 0}$ are again
  martingales. Consequently,
  \begin{equation*}
    E\left(\tilde \Gamma \left(X_{\tau_n\wedge t\wedge\tau}\right)-
      \tilde \Gamma(x)-\int_0^{\tau_n\wedge t\wedge\tau}\left(L\tilde \Gamma\right)
      (X_r)dr\right)=0.
  \end{equation*}
  Since $L\tilde \Gamma$ restricted to $[0,1]$ equals $-1$, the previous
  expression gives
  \begin{equation*}
    E\left(\tilde \Gamma\left(X_{\tau_n\wedge t\wedge\tau}\right)-
      \tilde \Gamma(x)\right)+E\left(\tau_n\wedge t\wedge\tau\right)=0.
  \end{equation*}
  Now we take the limit $n\to\infty$, and we can use the theorems of monotone
  and dominated convergence, since
  \begin{equation*}
    \left|\tilde \Gamma \left(X_{\tau_n\wedge t\wedge\tau}\right)\right|\leq\sup_{x\in[0,1]}\left| \Gamma(x)\right|.
  \end{equation*}
  This gives, for every $x\in[0,1]$,
  \begin{equation}
    \label{E40}
    E\left(\Gamma \left(X_{t\wedge\tau}\right)\right)-
    \Gamma(x)+E\left(t\wedge\tau\right)=0.
  \end{equation}
  Finally, letting $t\to\infty$, we get
  \begin{equation*}
    E\left(\tau\right)=\Gamma(x)-E\left(\Gamma\left(X_\tau\right)\right)
    =\Gamma(x),
  \end{equation*}
  by the same arguments as above taking $n \rightarrow \infty$.
\end{proof}
As byproduct of the proof of Proposition~\ref{P200} we get the following.
\begin{prop}
  \label{C200}
  The expectation of the exit time $\tau$ is exactly $\Gamma(x)$, where
  $\Gamma$ is the unique solution of
    \begin{align*}
      L\Gamma&=-1\\
      \Gamma(0)&=\Gamma(1)=0.
    \end{align*}
\end{prop}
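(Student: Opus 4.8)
The plan is to observe that Proposition~\ref{C200} is an immediate reading of the computation already carried out in the proof of Proposition~\ref{P200}. Indeed, the final display of that proof shows that for any $x \in [0,1]$ one has $E(\tau) = \Gamma(x) - E(\Gamma(X_\tau))$, and since $X_\tau \in \{0,1\}$ on the event $\{\tau < \infty\}$ (which by Proposition~\ref{P200} has full probability) and $\Gamma(0) = \Gamma(1) = 0$, the term $E(\Gamma(X_\tau))$ vanishes, yielding $E(\tau) = \Gamma(x)$.

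So the proof proposal is short: first recall that $\Gamma$ is well-defined by Proposition~\ref{R2.8} (with $\ell = -1$, $x_0 = x_1 = 0$), so the statement is not vacuous. Then I would simply point back to equation~\eqref{E40}: letting $t \to \infty$ in $E(\Gamma(X_{t\wedge\tau})) - \Gamma(x) + E(t \wedge \tau) = 0$, using dominated convergence for the first term (bounded by $\sup_{[0,1]}|\Gamma|$) together with the already-established fact that $\tau < \infty$ a.s., and monotone convergence for $E(t\wedge\tau) \uparrow E(\tau)$, gives $E(\tau) = \Gamma(x) - E(\Gamma(X_\tau))$. Finally, on $\{\tau < \infty\}$ the path is continuous up to $\tau$ and $X_\tau \in \{0,1\}$ by definition of the exit time, so $\Gamma(X_\tau) = 0$ almost surely, whence $E(\tau) = \Gamma(x)$.

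There is essentially no obstacle here — the result is explicitly stated to be a ``byproduct of the proof of Proposition~\ref{P200}''. The only point requiring a word of care is justifying $X_\tau \in \{0,1\}$: this uses continuity of the trajectories of $X$ (recall $X$ is an $\shf^X$-Dirichlet process, hence has continuous paths) so that at the first exit time from the closed interval $[0,1]$ the process actually sits on the boundary. One should also make sure the identity $E(\tau) = \Gamma(x) - E(\Gamma(X_\tau))$ is not merely an identity of possibly infinite quantities: Proposition~\ref{P200} already guarantees $E(\tau) < \infty$, so all terms are finite and the manipulation is legitimate.

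\begin{proof}
  By Proposition~\ref{R2.8}, the function $\Gamma$ is well-defined.
  From \eqref{E40} in the proof of Proposition~\ref{P200}, for every $x \in [0,1]$ we have
  \begin{equation*}
    E\left(\Gamma\left(X_{t\wedge\tau}\right)\right) - \Gamma(x) + E\left(t\wedge\tau\right) = 0, \qquad t \geq 0.
  \end{equation*}
  Since $\tau < \infty$ almost surely by Proposition~\ref{P200}, we have $X_{t\wedge\tau} \to X_\tau$ almost surely as $t\to\infty$; as $|\Gamma(X_{t\wedge\tau})| \leq \sup_{x\in[0,1]}|\Gamma(x)|$, dominated convergence yields $E(\Gamma(X_{t\wedge\tau})) \to E(\Gamma(X_\tau))$, while monotone convergence gives $E(t\wedge\tau) \to E(\tau)$. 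Hence
  \begin{equation*}
    E(\tau) = \Gamma(x) - E\left(\Gamma\left(X_\tau\right)\right).
  \end{equation*}
  On $\{\tau < \infty\}$, by continuity of the paths of $X$ and the definition of $\tau$ as the first exit time from $[0,1]$, we have $X_\tau \in \{0,1\}$, so $\Gamma(X_\tau) = 0$ since $\Gamma(0) = \Gamma(1) = 0$. As $\{\tau < \infty\}$ has full probability, $E(\Gamma(X_\tau)) = 0$, and therefore $E(\tau) = \Gamma(x)$.
\end{proof}
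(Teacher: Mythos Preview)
Your proof is correct and follows exactly the paper's approach: the paper presents Proposition~\ref{C200} as a byproduct of the proof of Proposition~\ref{P200}, and the final display of that proof already reads $E(\tau)=\Gamma(x)-E(\Gamma(X_\tau))=\Gamma(x)$. One minor slip: the well-definedness of $\Gamma$ as the unique solution of the \emph{boundary} value problem $L\Gamma=-1$, $\Gamma(0)=\Gamma(1)=0$ is provided by Proposition~\ref{P11}, not Proposition~\ref{R2.8} (the latter handles the initial value problem $u(a)=x_0$, $u'(a)=x_1$, which with $x_0=x_1=0$ would yield the function $-v$ of Proposition~\ref{4.2bis}, not $\Gamma$).
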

% \begin{rem} \label{R430} The fact that $E(\tau)<\infty$ will help to show
%   that $Y_t=u(X_t)$ and $Z_t=u'(X_t)$ provide a solution to the BSDE with
%   $E\left(\int_0^\tau Z_s^2ds\right)<\infty$.
% \end{rem}

\section{Martingale driven BSDEs with random terminal time}
\label{S5}

% \subsection{BSDEs driven by a square integrable martingale with stopping
% time horizon}
%

\subsection{Notion of solution}

\label{S51}

The present section does not aim at the greatest generality, which could be
the object of future research. We consider the case of one-dimensional BSDEs
driven by square integrable martingales with continuous predictable bracket.

% where the ``generator'' is uniformly Lipschitz in $(y,z)$.

% but only in the framework of continuous solutions.  square integrable
% martingales.

% Let $(M_t)_{t\in[0,t]}$ be a square integrable martingale with respect to
% some given filtration $(\shf_t)_{t\geq 0}$ and $\tau>0$ an a.\,s.\ finite
% stopping time.
Backward SDEs driven by martingales were investigated by several authors, see
e.\,g.\ \cite{buckdahn}, \cite{sant}, see also \cite{elkarouihuang},
\cite{buen1} and \cite{CCR} for recent developments.  We are interested in
such a BSDE with terminal condition at random time.  This is motivated by the
fact that the forward SDE (martingale problem) only admits weak solutions,
therefore the reference filtration will only be the canonical one related to
the solution and not the one associated with the underlying Brownian
motion. We consider the following data.
\begin{enumerate}[i)]
\item \label{st}An a.\,s.\ finite stopping time $\tau$.
\item \label{mc} An $\shf$-local martingale $\left(M_t\right)_{t\geq 0}$
  % $(\shf_t)$-square integrable martingale
  with an $\shf$-predictable continuous quadratic variation process
  $\left<M\right>$.  We suppose moreover that $M^\tau$ is an $\shf$-square
  integrable martingale, and we suppose the existence of a deterministic
  increasing function $\rho: \R_+ \rightarrow \R_+$ with $\rho (0) = 0$ and
%$$ \left<M\right>_t -  \left<M\right>_s \le \rho(t-s),\ \forall
%s < t < + \infty.
%  $$
$$ \left<M^\tau \right>_t  \le \rho(t),\ \forall t \ge 0.
  $$
\item \label{tc}A terminal condition $\xi\in
  L^2\left(\Omega,\shf_\tau,P;\mathbb R\right)$.
\item \label{pr}A coefficient $f:\Omega\times[0,T]\times\R^2\to\R$, such that
  the process $f(\cdot,t,y,z)$, $t\geq 0$, is predictable for every $y,z$.
\end{enumerate}
\begin{defi}
  \label{D1}
  Let $(Y,Z,O)$ be a triple of processes with the following properties.
  \begin{enumerate}[i)]
  \item \label{D1i}$Y$ is c\`adl\`ag
    % continuous
    $\shf$-adapted.
    % such that
    % $E\left(\sup_{t\leq\tau}\left(Y_t^2\right)\right)<\infty$.
  \item \label{D1ii}$Z$ is $\shf$-predictable such that $E\left(\int_0^\tau
      Z_s^2d\left<M\right>_s\right)<\infty$.
  \item \label{D1iii}$O$ is a square integrable martingale such that $O_0=0$
    and $E\left(O^2_\tau\right)<\infty$. Furthermore, $O$ is strongly
    orthogonal to $M$, i.\,e.\ $\left< M, O \right> = 0$.
  \item \label{D1iv} $Z_t=0$ if $t>\tau$ and $O_t = O_\tau$ for $t \ge \tau$.
  \end{enumerate}
  Such a triplet $(Y,Z,O)$ is called \emph{solution} of the BSDE
  $(f,\tau,\xi)$ if it fulfills
  \begin{equation}
    \label{1}
    Y_t=\xi-\int_t^\infty\1_{\left\{\tau\geq s\right\}}Z_sdM_s+
    \int_t^\infty\1_{\{\tau\geq s\}}f\left(\omega,s,Y_s,Z_s\right)d\left<M\right>_s-\left(O_\tau-O_{t\wedge\tau}\right).
  \end{equation}
\end{defi}
\begin{rem} \label{R53} \
  \begin{enumerate}[i)]
  \item If $t\geq\tau$ in \eqref{1} we get $Y_t=\xi=Y_\tau$, so in particular
    $Y_t=Y_\tau,\ t\geq\tau$.
    % \item Without restriction of generality we suppose $Z_t=0$ if
    %   $t>\tau$ and
    %   $O_t = O_\tau$ for $t \ge \tau$.
  \item Indeed we will always suppose that $M =M^\tau$ so that \eqref{1} can
    be rewritten as
    \begin{equation}
      \label{1bis}
      Y_t=\xi-\int_t^\infty Z_sdM_s+
      \int_t^\infty f\left(\omega,s,Y_s,Z_s\right)d\left<M\right>_s-
      \left(O_\tau-O_{t\wedge\tau}\right).
    \end{equation}
  \end{enumerate}
\end{rem}

% \begin{defi} \label{UL} The ``generator'' $f$ will be said to be
%   \emph{uniformly Lipschitz} if $f$ is uniformly Lipschitz, with respect to
%   $(y,z)$ uniformly and with respect to $(\omega,t)$
%   $P\otimes\left<M\right>$ a.\,e.\ in $\Omega\times\R_+$.
% %   We denote by $K$ the Lipschitz constant.
% \end{defi}

% The idea should be to transfer the present case the well-stated result for
% \eqref{1} valid when $\tau$ is a fixed terminal time $T$.  replacing $\tau$
% with a fixed terminal time.  We introduce a class of BSDEs with random
% terminal condition, when the driving process is $M$ which is a square
% integrable martingale.
When $M$ is a Brownian motion, this was treated in \cite{dapa} from which we
inherit and adopt very close notations.
% Indeed we do not aim in this paper to show existence and uniqueness in the
% most general framework.  We only try to express an efficient concept for our
% application.

\subsection{Uniqueness of Solutions}
\label{S53}

\begin{theo}\label{thu} Let $a, b,\kappa\in \R$ and set  $\gamma=b^2-2a$.
  We suppose the following.
  \begin{enumerate}[i)]
  \item
    $\left(f\left(\omega,s,y_1,z\right)-f\left(\omega,s,y_2,z\right)\right)\left(y_1-y_2\right)\leq
    -a\left|y_1-y_2\right|^2$,\label{as1} for every $\omega\in\Omega$, $s \in
    [0,T]$, $y_1,y_2,z \in \R$.
  \item
    $\left|f\left(\omega,s,y,z_1\right)-f\left(\omega,s,y,z_2\right)\right|\leq
    b\left|z_1-z_2\right|$,\label{as2} for every $\omega\in\Omega$, $s \in
    [0,T]$, $y \in \R$.
  \item $\left|f(\omega,s,y,z)-f(\omega, s,0,0)\right|\leq\kappa \left(|y|+
      \kappa' \right)+b|z|$,\label{as3} where $\kappa' \in\{1,0\}$.
  \item
    % IER KONTROLLIEREN, OB ES STIMMT.\\
    $E\left(\int_0^\tau e^{\theta \left<M\right>_t}\left(f(t,0,0)^2 +
        \kappa'\right) d\left<M\right>_t \right)<\infty$ for every $\theta <
    \gamma$.
  \end{enumerate}

  Let $\xi\in L^2\left(\Omega,\shf_\tau\right)$. Then the BSDE $(f,\tau,\xi)$
  admits at most one solution $(Y,Z,O)$, such that
  % \begin{enumerate}[a)]
  % \item $\sup_{t\ge 0}E\left(Y_t^\tau
  %     e^{\gamma\left<M\right>_{t\wedge\tau}}\right)<
  %   \infty$,\label{sa}
  % \item
  \begin{equation} \label{sb} E\left(Y_0^2 + \int_0^\tau
      e^{\gamma\left<M\right>_t}(Y^2_t + Z_t^2) d \left< M \right>_t
      +e^{\gamma \left<M\right>_t}d\left<O\right>_t\right)<\infty.
  \end{equation}
%  LUKAS BITTE. KONTROLLIEREN OB $Y_0$ NOTWENDIG IST
\end{theo}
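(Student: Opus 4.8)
The plan is to run the classical monotone-BSDE energy estimate, adapted to a general c\`adl\`ag driving martingale and to the exponential weight $e^{\gamma\langle M\rangle}$ that the relation $\gamma=b^2-2a$ imposes. Let $(Y^1,Z^1,O^1)$ and $(Y^2,Z^2,O^2)$ be two solutions of $(f,\tau,\xi)$, both obeying \eqref{sb}, and put $\bar Y=Y^1-Y^2$, $\bar Z=Z^1-Z^2$, $\bar O=O^1-O^2$, $\Delta f_s=f(\omega,s,Y^1_s,Z^1_s)-f(\omega,s,Y^2_s,Z^2_s)$. Since $M=M^\tau$ (Remark~\ref{R53}), subtracting the two instances of \eqref{1bis} gives a BSDE with vanishing terminal value,
\[
\bar Y_t=-\int_t^\infty\bar Z_s\,dM_s+\int_t^\infty\Delta f_s\,d\langle M\rangle_s-(\bar O_\tau-\bar O_{t\wedge\tau}),\qquad \bar Y_\tau=0,
\]
with $\bar Z_t=0$ and $\bar O_t=\bar O_\tau$ for $t\ge\tau$. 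Here $\bar O$ is a square integrable martingale, strongly orthogonal to $M$, so $\langle M,\bar O\rangle=0$ and $[M,\bar O]$ is a local martingale, and $E\int_0^\tau e^{\gamma\langle M\rangle_s}\,d\langle\bar O\rangle_s<\infty$ by polarization and \eqref{sb}.

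The central step is the product rule applied to $e^{\gamma\langle M\rangle_t}\bar Y_t^2$ on $[t,\tau]$. Using that $\langle M\rangle$ is continuous of finite variation, that $\bar Y$ jumps only on a $d\langle M\rangle$-null set, and that $[\bar Y]_t=\int_0^t\bar Z_s^2\,d[M]_s+2\int_0^t\bar Z_s\,d[M,\bar O]_s+[\bar O]_t$, I would expand, localize by stopping times $\tau_n\uparrow\tau$ to make all stochastic integrals true martingales, take expectations (so the martingale terms drop out), replace $d[M],d[\bar O]$ by $d\langle M\rangle,d\langle\bar O\rangle$, and pass to the limit in $n$ — this last being justified by \eqref{sb} together with the growth hypotheses (iii)--(iv), which dominate the integrand $e^{\gamma\langle M\rangle_s}\bar Y_s\Delta f_s$ by an integrable quantity. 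The outcome is the energy identity
\[
E\!\left(e^{\gamma\langle M\rangle_t}\bar Y_t^2\right)+E\!\int_t^\tau e^{\gamma\langle M\rangle_s}\bar Z_s^2\,d\langle M\rangle_s+E\!\int_t^\tau e^{\gamma\langle M\rangle_s}\,d\langle\bar O\rangle_s=E\!\int_t^\tau e^{\gamma\langle M\rangle_s}\left(2\bar Y_s\Delta f_s-\gamma\bar Y_s^2\right)d\langle M\rangle_s .
\]

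To close it, split $\Delta f_s=\bigl(f(\omega,s,Y^1_s,Z^1_s)-f(\omega,s,Y^2_s,Z^1_s)\bigr)+\bigl(f(\omega,s,Y^2_s,Z^1_s)-f(\omega,s,Y^2_s,Z^2_s)\bigr)$ and apply dissipativity (hypothesis~(i)) to the first bracket and the $z$-Lipschitz bound (hypothesis~(ii)) to the second, so that $\bar Y_s\Delta f_s\le-a\bar Y_s^2+b|\bar Z_s|\,|\bar Y_s|$. With Young's inequality $2b|\bar Z_s|\,|\bar Y_s|\le b^2\bar Y_s^2+\bar Z_s^2$ and $\gamma=b^2-2a$ we get $2\bar Y_s\Delta f_s-\gamma\bar Y_s^2\le\bar Z_s^2$, hence the right-hand side of the identity is $\le E\int_t^\tau e^{\gamma\langle M\rangle_s}\bar Z_s^2\,d\langle M\rangle_s$; this cancels the matching term on the left and leaves $E(e^{\gamma\langle M\rangle_t}\bar Y_t^2)+E\int_t^\tau e^{\gamma\langle M\rangle_s}\,d\langle\bar O\rangle_s\le0$. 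Therefore $\bar Y_t=0$ a.s.\ for every $t$ and $\langle\bar O\rangle_\tau=0$, i.e.\ $\bar O\equiv0$. Substituting $\bar Y\equiv0$ and $\bar O\equiv0$ into the difference BSDE, the martingale $\int_0^\cdot\bar Z_s\,dM_s$ equals a predictable finite variation process, hence is identically zero, so $E\int_0^\tau\bar Z_s^2\,d\langle M\rangle_s=0$; thus the two solutions coincide.

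The algebraic content — the product-rule expansion and the Young inequality tuned exactly to $\gamma=b^2-2a$ — is routine. The step I expect to be the main obstacle is the measure-theoretic bookkeeping: choosing the localizing stopping times so that the three local-martingale terms (notably the cross term $\int e^{\gamma\langle M\rangle}\bar Z\,d[M,\bar O]$, which is a martingale only because $\langle M,\bar O\rangle=0$) have zero expectation, and then rigorously passing to the limit — exactly where \eqref{sb} and hypotheses (iii)--(iv) enter.
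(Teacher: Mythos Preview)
Your strategy is the paper's strategy: an It\^o/energy expansion of $e^{\theta\langle M\rangle}\bar Y^2$, the monotonicity bound $\bar Y\Delta f\le -a\bar Y^2+b|\bar Y||\bar Z|$, and a Young inequality tuned to $\gamma=b^2-2a$. Where you diverge from the paper is that you run the whole argument directly at the weight $\theta=\gamma$ and plan to justify the vanishing of the martingale terms by ``localize and pass to the limit, using \eqref{sb} and (iii)--(iv).'' This is precisely the point at which the paper does something more elaborate, and your version as written has a gap.

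Two concrete problems with working directly at $\theta=\gamma$. First, assumption~(iv) gives $E\int_0^\tau e^{\theta\langle M\rangle_s}(f(s,0,0)^2+\kappa')\,d\langle M\rangle_s<\infty$ only for $\theta<\gamma$, not at $\theta=\gamma$; so when $\kappa'=1$ your claim that $e^{\gamma\langle M\rangle_s}\bar Y_s\Delta f_s$ is dominated by an integrable quantity is not supported by the hypotheses (the cross term $\kappa'|\bar Y_s|$ needs $E\int e^{\gamma\langle M\rangle_s}d\langle M\rangle_s<\infty$). Second, and more seriously, if you genuinely localize by $\tau_n\uparrow\tau$, the terminal term $E\bigl(e^{\gamma\langle M\rangle_{\tau_n}}\bar Y_{\tau_n}^2\bigr)$ appears on the right and must be shown to tend to $0$; \eqref{sb} alone does not give $E\bigl(\sup_{s\le\tau}e^{\gamma\langle M\rangle_s}\bar Y_s^2\bigr)<\infty$, so you have no dominated-convergence handle. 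The paper circumvents both issues by first fixing $\theta<\gamma$: then \eqref{sb} together with (iv) yield condition~\eqref{ic} of Lemma~\ref{L1}, and that lemma \emph{upgrades} the stochastic integral $\int e^{\theta\langle M\rangle_s}\bar Y_{s-}(\bar Z_s\,dM_s+d\bar O_s)$ to a uniformly integrable martingale, so no localization is needed at all. One obtains the inequality at every $\theta<\gamma$, and only then lets $\theta\uparrow\gamma$ by monotone convergence. This two-step passage ($\theta\uparrow\gamma$ after using Lemma~\ref{L1}) is the missing ingredient in your sketch.

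A minor difference that is fine: you recover $\bar Z\equiv0$ by observing that $\int\bar Z\,dM$ equals the continuous finite-variation process $\int\Delta f\,d\langle M\rangle$, hence is a continuous local martingale of finite variation and therefore vanishes. The paper instead keeps an $\varepsilon>0$ slack in the Young inequality (so a term $\tfrac{\varepsilon}{1+\varepsilon}\bar Z^2$ survives on the left of \eqref{A6}) and reads off $\bar Z\equiv0$ from there. Both routes are valid.
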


\begin{rem}\label{RY0}
  \
  \begin{enumerate}
  \item In the proof of Theorem~\ref{thu} in appendix~\ref{ProofTheorem}, we
    omit the dependence of $f$ on $\omega$ in order to simplify the notations.
  \item If we suppose $\shf_0$ to be the trivial $\sigma$-field, then $Y_0^2$
    can be deleted in \eqref{sb}.
  \end{enumerate}
\end{rem}
In the proof of Theorem~\ref{thu} in appendix~\ref{ProofTheorem}, we use the
following technical lemma, which is the generalization of Proposition 4.3 in
\cite{dapa}.
\begin{lemma}
  \label{L1}
  Suppose the validity of hypotheses~\ref{as1},~\ref{as2} and~\ref{as3} of
  Theorem~\ref{thu},
  % Suppose moreover that $\kappa' E(e^{\langla M \right>_\tau}) <
  % \infty$.
  and let $(Y,Z,O)$ be a solution of BSDE $(f,\tau,\xi)$ such that for some
  $\theta$,
  \begin{equation}
    \label{ic}
    E\left(Y_0^2 + \int_0^\tau e^{\theta\left<M\right>_s}\left(\left|Y_s\right|^2+\left|Z_s\right|^2 +  f^2(s,0,0) + \kappa'  \right)d\left<M\right>_s
      % \underbrace{d\left<M\right>_se^{\theta\left<M\right>_s}}_{d\left(e^{\theta\left<M\right>_s}\right)}
      + \int_0^\tau e^{\theta \left< M \right>_s} d \left< O \right>_s
    \right)<\infty.
  \end{equation}
  Then
  \begin{align}
    \label{E1}
    E\left(\sup_{s\leq\tau}e^{\theta\left<M\right>_s}\left|Y_s\right|^2\right)
    <\infty,
  \end{align}
  and
  \begin{equation}
    \label{E2}
    N_t=\int_0^{t\wedge\tau}e^{\theta\left<M\right>_s}Y_{s-}\left(Z_sdM_s+dO_s\right)=\int_0^{t}e^{\theta\left<M\right>_s}Y_{s-}\left(Z_sdM_s+dO_s\right)
  \end{equation}
  is a uniformly integrable martingale.
\end{lemma}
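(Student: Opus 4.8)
The plan is to follow closely the strategy of Proposition 4.3 in \cite{dapa}, adapting it to a general c\`adl\`ag martingale $M$ with continuous bracket $\left<M\right>$ instead of Brownian motion. The starting point is to apply It\^o's formula to the process $t\mapsto e^{\theta\left<M\right>_{t\wedge\tau}}\left|Y_{t\wedge\tau}\right|^2$. Since $\left<M\right>$ is continuous, the exponential term contributes only a bounded variation term $\theta e^{\theta\left<M\right>_s}\left|Y_s\right|^2 d\left<M\right>_s$; the jumps of $Y$ enter through the jumps of the martingale part $\int Z\,dM + O$, which produce the quadratic term $d[\int Z\,dM+O]_s = Z_s^2 d\left<M\right>_s + d\left<O\right>_s$ (using the strong orthogonality $\left<M,O\right>=0$). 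Using the BSDE relation \eqref{1bis} to substitute $dY_s$, one obtains, after taking $t\to\infty$ and using $Y_\tau=\xi$,
\begin{equation*}
  e^{\theta\left<M\right>_{t\wedge\tau}}\left|Y_{t\wedge\tau}\right|^2
  + \int_{t\wedge\tau}^{\tau} e^{\theta\left<M\right>_s}\left(Z_s^2 d\left<M\right>_s + d\left<O\right>_s\right)
  = e^{\theta\left<M\right>_\tau}\xi^2 - 2\int_{t\wedge\tau}^\tau e^{\theta\left<M\right>_s}Y_s f(s,Y_s,Z_s)\,d\left<M\right>_s - \theta\int_{t\wedge\tau}^\tau e^{\theta\left<M\right>_s}\left|Y_s\right|^2 d\left<M\right>_s - 2(N_\tau - N_{t\wedge\tau}),
\end{equation*}
where $N$ is as in \eqref{E2}.

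The next step is to control the drift term $Y_s f(s,Y_s,Z_s)$ using hypothesis~\ref{as3}: write $Y_s f(s,Y_s,Z_s) = Y_s(f(s,Y_s,Z_s)-f(s,0,0)) + Y_s f(s,0,0)$ and bound $|Y_s f(s,Y_s,Z_s)| \le \kappa|Y_s|^2 + \kappa\kappa'|Y_s| + b|Y_s||Z_s| + |Y_s||f(s,0,0)|$. Applying Young's inequality to split $b|Y_s||Z_s| \le \frac12 Z_s^2 + \frac{b^2}{2}|Y_s|^2$ (and similarly for the $\kappa\kappa'|Y_s|$ and $|Y_s||f(s,0,0)|$ terms), the $\frac12 Z_s^2$ piece is absorbed into the left-hand side, and everything else is dominated by a constant times $\int_0^\tau e^{\theta\left<M\right>_s}(|Y_s|^2 + Z_s^2 + f^2(s,0,0) + \kappa')\,d\left<M\right>_s$, which is finite by hypothesis~\eqref{ic}. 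Combined with $E(e^{\theta\left<M\right>_\tau}\xi^2)<\infty$ (note $\left<M\right>_\tau\le\rho(\infty)$ may not be bounded, but one uses instead that $\theta\left<M\right>_s\le\theta\left<M\right>_\tau$ on $\{\theta\ge0\}$ together with the integrability assumption, or controls $e^{\theta\left<M\right>_\tau}\xi^2$ directly via \eqref{ic} applied near $\tau$), this shows that the right-hand side above, minus the martingale increment, is uniformly integrable in $t$.

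Finally I would handle the martingale term $N$ and derive \eqref{E1}. By the Burkholder--Davis--Gundy inequality,
\begin{equation*}
  E\left(\sup_{t}\left|N_{t\wedge\tau}\right|\right) \le C\, E\left(\left[N\right]_\tau^{1/2}\right) = C\, E\left(\left(\int_0^\tau e^{2\theta\left<M\right>_s}\left|Y_{s-}\right|^2\left(Z_s^2 d\left<M\right>_s + d\left<O\right>_s\right)\right)^{1/2}\right),
\end{equation*}
and I would bound the integrand factor $e^{2\theta\left<M\right>_s}|Y_{s-}|^2$ by $\bigl(\sup_{s\le\tau}e^{\theta\left<M\right>_s}|Y_s|^2\bigr)\cdot e^{\theta\left<M\right>_s}$ (using continuity of $\left<M\right>$, so $\left<M\right>_{s-}=\left<M\right>_s$), pull out the square root of the supremum, and apply the inequality $\sqrt{xy}\le \varepsilon x + \frac{1}{4\varepsilon}y$ so that $E(\sup_{s\le\tau}e^{\theta\left<M\right>_s}|Y_s|^2)$ is partially absorbed on the left. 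Taking $t\to\infty$ in the displayed identity, taking suprema and then expectations, and using the estimates above, yields a bound of the form $E(\sup_{s\le\tau}e^{\theta\left<M\right>_s}|Y_s|^2) \le \frac12 E(\sup_{s\le\tau}e^{\theta\left<M\right>_s}|Y_s|^2) + (\text{finite})$, which gives \eqref{E1}; once \eqref{E1} holds, the BDG bound shows $E([N]_\tau^{1/2})<\infty$, so $N$ is a uniformly integrable martingale, proving \eqref{E2}. The main obstacle I anticipate is the bookkeeping around the jump terms and the strong orthogonality of $O$ and $M$ in the It\^o expansion, and making the absorption argument rigorous without circularity — one must first run the argument with $Y$ replaced by the stopped/localized process $Y^{\sigma_n}$ for a localizing sequence $\sigma_n$ making $N^{\sigma_n}$ a true martingale, obtain the bound uniformly in $n$, and only then pass to the limit by Fatou.
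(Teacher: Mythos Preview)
Your overall architecture---It\^o's formula for $e^{\theta\langle M\rangle}Y^2$, control of the drift via hypothesis~\ref{as3}, BDG plus the absorption trick, and localization---matches the paper's proof. The genuine gap is in the \emph{direction} you run the It\^o expansion. You write the identity between $t\wedge\tau$ and $\tau$, which puts $e^{\theta\langle M\rangle_\tau}\xi^2$ on the right-hand side, and you then need $E\bigl(e^{\theta\langle M\rangle_\tau}\xi^2\bigr)<\infty$. This is not contained in~\eqref{ic}: that hypothesis gives only $E(Y_0^2)<\infty$ together with a $d\langle M\rangle$-integral condition, and your parenthetical suggestions (``$\theta\langle M\rangle_s\le\theta\langle M\rangle_\tau$ on $\{\theta\ge0\}$'' or ``\eqref{ic} applied near $\tau$'') do not yield it. Indeed, once the lemma is proved, \eqref{E1} would imply $E\bigl(e^{\theta\langle M\rangle_\tau}\xi^2\bigr)<\infty$, so invoking it beforehand is circular. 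The paper avoids this by running It\^o \emph{forward} from $0$ to $t\wedge\tau$, so the anchor term is $Y_0^2$, whose integrability is explicitly part of~\eqref{ic}. The price is that the quadratic variation $[N^\theta]_{t\wedge\tau}$ now sits on the right-hand side and must itself be bounded; this is handled after localization via $E\bigl([N^\theta]_{\tau(n)\wedge\tau}\bigr)=E\bigl(\langle N^\theta\rangle_{\tau(n)\wedge\tau}\bigr)$, which is dominated by the integral in~\eqref{ic}.

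A smaller slip: you write $d\bigl[\int Z\,dM+O\bigr]_s = Z_s^2\, d\langle M\rangle_s + d\langle O\rangle_s$, identifying the square bracket with the angle bracket. Since $M$ and $O$ are only c\`adl\`ag, these differ by a local martingale; the paper keeps $[N^\theta]$ in the pathwise inequality and passes to $\langle N^\theta\rangle$ only after taking expectation and after the localization $\tau(n)=\inf\{t:Y_t\ge n\}\wedge n$ has made the compensating martingale uniformly integrable. This distinction matters precisely at the step where you take $\sup_t$ before expectation.
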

We prove this lemma in appendix~\ref{ProofLemma}.

\begin{rem} \label{RExistence} Adapting the results of \cite{dapa} Proposition
  3.3, it is possible to state and prove also an existence theorem. We have
  decided not to do it for two reasons.
  \begin{enumerate}
  \item The techniques can be adapted from the proof of Proposition 3.3 by the
    same techniques as in the proof of Theorem~\ref{thu}.
  \item For our applications to the probabilistic representation of semilinear
    PDEs, we already provide an \emph{existence} theorem through the
    resolution of the PDE.
  \end{enumerate}
\end{rem}

\section{Solutions for BSDEs via solutions of elliptic PDEs}
\label{sec:existence}

In this final section we will make the assumptions of Section~\ref{S22} which
guarantee existence and uniqueness in law of the martingale problem with
respect to $L$.  In particular we will suppose that $\sigma > 0$, $\Sigma$ as
defined in \eqref{ESigma} exists and we assume the validity
 of \eqref{F2} for the function
$v$ defined in \eqref{F1}.

Let $x_0 \in \R$ and let $X$ solve a martingale problem $\MP(\sigma, \beta; x_0)$ \eqref{SFSDE}.
We are interested in a BSDE with terminal condition at the
random time $\tau$, which is the exit time of $X$ from interval $[0,1]$.
In this section $\shf$ is  the canonical filtration $\shf^X$ of $X$.
Since $X$ solves the martingale problem, by Remark~\ref{RR}, $X$ is an
$\shf^X$-Dirichlet process. From now on  its  $\shf^X$-local martingale 
component $M^X$
will also be denoted by $M$, in agreement
with Section \ref{S5}.
%see e. g. Remark \ref{RR} \ref{RR1}.  
%$M$ will be $M^X$ equipped with the canonical filtration $\shf^X$ of $X$.

Let $F:\R^3\to\R$ be a  continuous function. We set
\begin{align}
  \label{eq:7}
  f(\omega,t,y,z)&=- \frac{F(X_t(\omega),y,z)}{\sigma^2(X_t(\omega))},
&t&\geq 0,\ y,z\in\R,
\end{align}
and we define
\begin{equation} \label{EETau} \tau=\inf\left\{\left.t\geq 0\right| X_t\notin
    I\right\}.
\end{equation}
Let $u_0, u_1 \in \R$ and set
$\xi=\1_{\left\{X_\tau=0\right\}}u_0+\1_{\left\{X_\tau=1\right\}}u_1$. So
\begin{equation} \label{EXi} \xi=u(X_\tau),
\end{equation}
for a function $u:[0,1] \rightarrow \R$ such that $u(0) = u_0, u(1) = u_1$.

Our method allows to construct solutions of the BSDE $(f,\xi,\tau)$ even in cases
that $f$ does not necessarily fulfill Lipschitz or monotonicity assumptions.

We need to check that we are in the framework of the hypotheses at the
beginning of Section~\ref{S51}.
% In particular conditions~\ref{D1i}, ~\ref{D1ii} and~\ref{D1iii} of
% Definition~\ref{D1}
\begin{itemize}
\item~\ref{st} is verified because of Proposition~\ref{P200}.
\item~\ref{mc} holds because
  \begin{equation*}
    \left<M^\tau \right>_t = \int_0^{t \wedge \tau} \sigma^2(X_s) ds  \le \rho(t),
  \end{equation*}
  where $\rho(t) = t \sup_{x\in [0,1]} \sigma^2(x)$.

\item~\ref{tc} is fulfilled since $\xi$ is a bounded random variable, of
  course $\shf_\tau$-measurable.
\item~\ref{pr} is verified by construction, and because $X$ is a continuous
  adapted process.
\end{itemize}
% ----------
% % ~\ref{D1ii} follow, since $Y$ and $Z$ are bounded respectively by
% $\sup_{x\in[0,1]}\left|u(x)\right|$ and
% $\sup_{x\in[0,1]}\left|\sigma(x)\partial_xu(x)\right|$.~\ref{D1iii} is
% trivial.  ---------------
The aim of this section is to show that the $C^1$-type solutions of elliptic
PDEs in the sense of Definition~\ref{nlb} produce solutions to a BSDE of the
type defined in Definition~\ref{D1}.
% , where
% \begin{align}
%   \label{eq:7}
%   f(\omega,t,y,z)&=-F(X_t(\omega),y,z),&t&\geq 0,\ y,z\in\R.
% \end{align}
% In this equation $(X_t)$ is a solution of the martingale problem on some
% fixed probability space and $F:\R^3\to\R$ continuous. Our method allows to
% construct solutions even in cases that $f$ is not uniformly Lipschitz.

\begin{rem} \label{R56} \
  \begin{enumerate}
  \item $\mathcal F^X$ is generally not a Brownian filtration, so that the
    theory of \cite{dapa} for existence and uniqueness of BSDEs with random
    terminal time cannot directly be applied.
    % Indeed introduce BSDEs with respect to a general (at least
    % continuous) martinga Indeed, to make use of Brownian filtration
    % is not enough.
  \item Even for a simple equation of the type
    \begin{equation*}
      dX_t=\sigma_0(X_t)dW_t,
    \end{equation*}
    where $\sigma_0$ is only a continuous bounded non-degenerate function,
    $\shf^X$ is not necessarily equal to $\shf^W$ even though $W$ is an
    $\shf^X$-Brownian motion.
  \item In general, the solution of a semilinear differential equation of the
    type \eqref{slb} can be associated with the solution of a BSDE driven by
    the martingale $M^X$ which is the martingale component of the
    $\shf^X$-Dirichlet process $X$.
  \item In Section~\ref{S5} we have investigated BSDEs driven by (even not
    continuous) martingales, which are of independent interest.
  \end{enumerate}

\end{rem}

\begin{theo}\label{PB4}
  Let $I=\left[0,1\right]$ and $u:I\to\R$ be a $C^1$-solution of
  \begin{equation}
    \label{PDE}
    \left\{
    \begin{aligned}
      Lu\left(x\right)&=F\left(x,u\left(x\right),u'\left(x\right)\right)\\
      u\left(0\right)&=u_0\\
      u\left(1\right)&=u_1.
    \end{aligned}
    \right.
  \end{equation}
  % for $u_0, u_1\in\mathbb{R}$.\\

  %Let $x_0 \in [0,1]$. 
 Let $(X_t) = X^{x_0}$ be a solution of
  $\MP\left(\sigma,\beta;x_0\right)$ on some probability space
  $\left(\Omega,\mathcal{G},P\right)$.
  We set, for $t \in [0,T]$,
  \begin{align*}
    \label{6.23bis}
    Y_t&=u(X_t^\tau)\\
 %   Z_t&=u'(X_t)\sigma(X_t) \1_{[0,\tau]}(t)\\
    Z_t&=u'(X_t) \1_{[0,\tau]}(t)\\
    O_t &= 0. \\
  \end{align*}
  Then $\left(Y,Z, O\right)$ is a solution on $\left(\Omega,\mathcal{G},
    P\right)$ to the BSDE $\left(f,\xi,\tau\right)$, where $f, \tau, \xi$ were
  defined in \eqref{eq:7}, \eqref{EETau}, \eqref{EXi}.
  % with random terminal time $\tau$ defined in \eqref{EETau}.

\end{theo}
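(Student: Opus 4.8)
The plan is to read off the backward equation~\eqref{1} from the forward decomposition of $u(X)$ furnished by Proposition~\ref{P48}, combined with the identity $\langle M\rangle_t=\int_0^t\sigma^2(X_s)\,ds$ of Remark~\ref{RR}. By Definitions~\ref{nlb} and~\ref{DLP1}, $u$ being a $C^1$-solution of~\eqref{PDE} means that there is $\tilde u\in\shd_L$ with $\tilde u|_{[0,1]}=u$ and a continuous extension $\tilde\ell$ of $x\mapsto F(x,u(x),u'(x))$ such that $L\tilde u=\tilde\ell$. Applying Proposition~\ref{P48} to $\varphi=\tilde u$ and stopping at $\tau$ yields, for every $t\ge0$,
\[
  \tilde u(X_{t\wedge\tau})=\tilde u(x_0)+\int_0^{t\wedge\tau}\tilde u'(X_s)\,dM_s+\int_0^{t\wedge\tau}(L\tilde u)(X_s)\,ds .
\]
Since $X$ is continuous and $[0,1]$ is closed, we have $X_s\in[0,1]$ for all $s\le\tau$ and $X_\tau\in\{0,1\}$ on $\{\tau<\infty\}$; hence $\tilde u(X_s)=u(X_s)$, $\tilde u'(X_s)=u'(X_s)$ and $(L\tilde u)(X_s)=F(X_s,u(X_s),u'(X_s))$ for $s\le\tau$, so that $Y_t=u(X_{t\wedge\tau})$ satisfies
\[
  Y_t=u(x_0)+\int_0^{t\wedge\tau}u'(X_s)\,dM_s+\int_0^{t\wedge\tau}F(X_s,u(X_s),u'(X_s))\,ds .
\]

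Next I would recast the two integrals. By Remark~\ref{RR} one has $\langle M\rangle_s=\int_0^s\sigma^2(X_r)\,dr$, and since $\sigma>0$ this gives $ds=\sigma^{-2}(X_s)\,d\langle M\rangle_s$; recalling the definition~\eqref{eq:7} of $f$ and that $Y_s=u(X_s)$, $Z_s=u'(X_s)$ on $\{s\le\tau\}$, the $ds$-integral becomes $-\int_0^t\1_{\{\tau\ge s\}}f(\omega,s,Y_s,Z_s)\,d\langle M\rangle_s$, while the stochastic integral equals $\int_0^t\1_{\{\tau\ge s\}}Z_s\,dM_s$ (here $Z_s=0$ for $s>\tau$, consistently with the convention $M=M^\tau$ of Remark~\ref{R53}). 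Both $\int_0^{t\wedge\tau}$-integrals, as well as $X_{t\wedge\tau}$, are constant in $t$ for $t\ge\tau$, and $\tau<\infty$ a.s.\ by Proposition~\ref{P200}; letting $t\to\infty$ and using~\eqref{EXi} we obtain
\[
  \xi=u(X_\tau)=u(x_0)+\int_0^\tau Z_s\,dM_s-\int_0^\tau f(\omega,s,Y_s,Z_s)\,d\langle M\rangle_s .
\]
Subtracting the identity at time $t$ from this one, and noting that $O_\tau-O_{t\wedge\tau}=0$ since $O\equiv0$, gives precisely~\eqref{1}.

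It then remains to check that $(Y,Z,O)$ satisfies the requirements of Definition~\ref{D1}. The process $Y$ is continuous, hence c\`adl\`ag, and $\shf^X$-adapted; $O\equiv0$ is trivially a square integrable martingale with $O_0=0$, $E(O_\tau^2)=0$ and $\langle M,O\rangle=0$; and $Z_t=0$ for $t>\tau$, $O_t=O_\tau$ for $t\ge\tau$ hold by construction. Since $u'$ is bounded on the compact $[0,1]$ and $X_s\in[0,1]$ for $s\le\tau$, we have $|Z_s|\le\sup_{[0,1]}|u'|$, so $Z$ is bounded, left-continuous and adapted — hence predictable — and $E\bigl(\int_0^\tau Z_s^2\,d\langle M\rangle_s\bigr)\le\sup_{[0,1]}|u'|^2\cdot\sup_{[0,1]}\sigma^2\cdot E(\tau)<\infty$ by Proposition~\ref{P200}. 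The only genuinely delicate point is the very first step: carrying the $C^1$-extension $\tilde u$ through the argument and justifying, via continuity of $X$ and closedness of $[0,1]$, that the stopped trajectory $X^\tau$ stays in the region where $\tilde u$, $\tilde u'$ and $L\tilde u$ agree with $u$, $u'$ and $F(\cdot,u,u')$; the remaining manipulations — stopping stochastic integrals and the time change $ds\leftrightarrow\sigma^{-2}(X_s)\,d\langle M\rangle_s$ — are routine.
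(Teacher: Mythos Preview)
Your proof is correct and follows essentially the same route as the paper's: extend $u$ to $\tilde u\in\shd_L$, use the martingale problem to decompose $\tilde u(X)$, stop at $\tau$, pass to the limit, and check Definition~\ref{D1}. The only difference is cosmetic: you invoke Proposition~\ref{P48} directly to obtain $\tilde u(X_t)=\tilde u(x_0)+\int_0^t\tilde u'(X_s)\,dM_s+\int_0^t(L\tilde u)(X_s)\,ds$, whereas the paper's proof re-derives this identification inline via the uniqueness of the Dirichlet decomposition (Remark~\ref{R1.3}); since Proposition~\ref{P48} already packages that argument, your version is slightly more economical but not a different approach.
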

\begin{proof}
  We recall that, by Proposition~\ref{P200}, $\tau<\infty$ almost surely.
  % Without restriction of generality we suppose $s=0$.

  By Definitions~\ref{nlb} and~\ref{DLP1}, there exists $\tilde u \in\mathcal
  D_L$ which extends $u$ to the real line and $L \tilde u = \tilde \ell$ and
  $\tilde \ell: \R \rightarrow \R$ is a continuous function extending $\ell(x)
  = F(x,u(x),u'(x))$.  By the definition of the martingale problem,
  \begin{equation}\label{BBB}
    M_t^{\tilde u}:=\tilde u(X_t)- \tilde u(X_0)-\int_0^t L \tilde u(X_s)ds, \
    t  \in [0,T],
  \end{equation}
  is an $\mathcal{F}^X$-local martingale.
	
  By Remark~\ref{R1.3} $\tilde Y_t= \tilde u(X_t)$ is an
  $\left(\shf^X\right)$-Dirichlet process with martingale component
  $\int_0^t {\tilde u}'(X_s)dM^X_s$. On the other hand, by \eqref{BBB},
% and \eqref{E10},
  $\tilde Y$ is an $\left(\shf^X\right)$-semimartingale with
  martingale component $M^{\tilde u} $.
	%\begin{equation*}
  %  \int_0^\cdot \tilde u'(X_s)\sigma(X_s)dW^X+O=\int_0^\cdot \tilde 
  %  u'(X_s)dM^X_s+O.
  %\end{equation*}
  By uniqueness of decomposition of Dirichlet processes 
	$$ M^{\tilde u}_t = \int_0^t \tilde u'(X_s)dM^X_s.$$
	%$O$ vanishes. 
	We set  now
    \begin{align*}
      Y_t&=\tilde u(X_{t\wedge\tau})\\
    %  Z_t&= \tilde u'(X_t)\sigma(X_t)\1_{[0,\tau]}(t).
    Z_t&= \tilde u'(X_t) \1_{[0,\tau]}(t).
 \end{align*}
  \eqref{BBB} gives
  \begin{equation*}
    \tilde u(X_t)- \tilde u(X_0) = \int_0^tL \tilde u(X_s)ds+
    \int_0^t \tilde u'(X_s)dM^X_s.
  \end{equation*}
  Stopping previous identity at time $\tau$ implies for every $T>0$ that
  \begin{equation*}
    Y_{T\wedge\tau}-Y_{t\wedge\tau}=\int_{t\wedge\tau}^{T\wedge\tau}L \tilde u(X_s)ds+
   \int_{t\wedge\tau}^{T\wedge\tau} u'(X_s) dM^X_s.
%  \int_{t\wedge\tau}^{T\wedge\tau} u'(X_s)\sigma(X_s)dW^X_s.
  \end{equation*}
  Letting $T\to\infty$, since $\tau<\infty$ a.\,s.\ gives
  \begin{equation*}
    Y_t=Y_\tau-\int_{t\wedge\tau}^\tau L \tilde u(X_s)ds
-\int_{t\wedge\tau}^\tau Z_sdM_s^X.
  \end{equation*}
 
% Since $u$ solves equation \eqref{PDE},
So $(Y,Z,O)$ solves BSDE $(f,\tau,\xi)$
  with $O\equiv 0$.  In particular the conditions of Definition~\ref{D1} are
  fulfilled.  In fact~\ref{D1i},~\ref{D1iii} and~\ref{D1iv} are
  trivial.~\ref{D1ii} holds since $\tilde u'$ is bounded on $[0,1]$.
Moreover \eqref{1} is fulfilled since $u$ solves \eqref{PDE},
taking into account \eqref{eq:7}.
 
\end{proof}
\begin{rem} \label{RPB4} Since $u$ is bounded, we also have
  $E\left(\sup_{t\leq\tau} Y_t^2\right)<\infty$.
\end{rem}

By Theorem~\ref{PB4}, Corollary~\ref{CEE} and the Propositions~\ref{pdex}
and~\ref{FP}, we conclude the following.
\begin{cor}\label{C58a}
  % Let $a=0$, $b=1$, $A,B\in\R$, and
  Let $F:[0,1]\times\R^2\to\R$ be continuous. Suppose that at least one of the
  following assumptions holds.
  \begin{enumerate}[a)]
  \item $(x,y) \mapsto  F(x,y,0)$ has linear growth with respect to $y$, 
%it is non-decreasing in $y$ i.\,e.\ 
\eqref{mono} is fulfilled and $F$ is globally Lipschitz in $z$.
  \item $(x,y,z) \mapsto F(x,y,z)$ is bounded and globally Lipschitz with respect to $(y,z)$.
  \item $(x,y,z) \mapsto F(x,y,z)$ is globally Lipschitz with respect to
 $(y,z)$ and
    Lipschitz-constant $k$, fulfilling
    \begin{equation*}
      k<\left(\sup_{x\in[0,1]}\int_0^1dy\left(\left|K(x,y)\right|+\left|\partial_xK(x,y)\right|\right)\right)^{-1},
    \end{equation*}
  \end{enumerate}
  $K$ being the kernel introduced in \eqref{exp3}. Then there is a solution
  $(Y,Z,O)$ of BSDE $(f,\tau,\xi)$, given by \eqref{1}, where $f,\tau,\xi$
  were defined in \eqref{eq:7}, \eqref{EETau}, \eqref{EXi}.
\end{cor}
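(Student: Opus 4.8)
The plan is to combine the deterministic existence results for the boundary value problem with the probabilistic representation of Theorem~\ref{PB4}. The key observation is that Corollary~\ref{CEE}, Proposition~\ref{pdex} and Proposition~\ref{FP} each yield — under hypothesis a), b), or c) respectively — a $C^1$-solution $u:[0,1]\to\R$ of the boundary value problem
\begin{equation*}
  \left\{
    \begin{aligned}
      Lu(x)&=F(x,u,u'),\\
      u(0)&=u_0,\\
      u(1)&=u_1,
    \end{aligned}
  \right.
\end{equation*}
in the sense of Definition~\ref{nlb}. So the first step is simply to invoke the appropriate existence result in each of the three cases: under a) apply Corollary~\ref{CEE} (with $a=0$, $b=1$, $A=u_0$, $B=u_1$); under b) apply Proposition~\ref{pdex}; under c) apply Proposition~\ref{FP}. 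In all three cases one obtains a solution $u$ of \eqref{PDE}.

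The second step is to feed this $u$ into Theorem~\ref{PB4}. We take $X=X^{x_0}$ to be a solution of the martingale problem $\MP(\sigma,\beta;x_0)$ — which exists and is unique in law because we are assuming, as stated at the start of Section~\ref{sec:existence}, that $\sigma>0$, that $\Sigma$ exists, and that \eqref{F2} holds for $v$ defined in \eqref{F1}. We let $\tau$ be the exit time of $X$ from $[0,1]$ as in \eqref{EETau}, and we let $f$ and $\xi$ be defined by \eqref{eq:7} and \eqref{EXi}, noting that $\xi=u(X_\tau)$ with $u(0)=u_0$, $u(1)=u_1$. The preliminary verification preceding Theorem~\ref{PB4} already confirmed that the data $(f,\tau,\xi)$ satisfy the structural hypotheses~\ref{st}--\ref{pr} of Section~\ref{S51}. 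Theorem~\ref{PB4} then directly produces the triple $(Y_t,Z_t,O_t)=(u(X_t^\tau),\,u'(X_t)\1_{[0,\tau]}(t),\,0)$ as a solution of the BSDE $(f,\tau,\xi)$ in the sense of Definition~\ref{D1}, which is exactly the assertion.

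There is essentially no obstacle here, since this corollary is purely a matter of assembling previously established pieces; the only point demanding a moment of care is the bookkeeping of which deterministic existence theorem applies to which of the three hypotheses and checking that their hypotheses are phrased consistently with a)--c) (e.g.\ that ``globally Lipschitz with respect to $(y,z)$'' in the present statement matches the phrasing in Propositions~\ref{pdex} and~\ref{FP}, and that ``linear growth with respect to $y$'' of $x\mapsto F(x,y,0)$ together with \eqref{mono} and global Lipschitz continuity in $z$ matches the hypotheses of Corollary~\ref{CEE}). Once that alignment is confirmed, the proof is a two-line citation: ``By Corollary~\ref{CEE}, Proposition~\ref{pdex} or Proposition~\ref{FP}, under assumption a), b) or c) respectively, there exists a $C^1$-solution $u$ of \eqref{PDE}. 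By Theorem~\ref{PB4}, the triple $(Y,Z,O)$ defined there solves the BSDE $(f,\tau,\xi)$.''
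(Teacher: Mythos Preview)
Your proposal is correct and follows exactly the paper's approach: the paper's proof is just the line preceding the corollary, ``By Theorem~\ref{PB4}, Corollary~\ref{CEE} and the Propositions~\ref{pdex} and~\ref{FP}, we conclude the following.'' Your expanded version simply makes explicit which deterministic existence result handles which hypothesis and then invokes Theorem~\ref{PB4}, which is precisely what the paper intends.
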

\begin{rem} \label{RC58a} The solution is provided in the statement of
  Theorem~\ref{PB4}.
\end{rem}
Corollary \ref{C59} follows from Corollary \ref{C58a} and Theorem
\ref{thu}.
\begin{cor}
  \label{C59}
  Let $F:[0,1]\times\R^2\to\R$ with the following assumptions.
  \begin{enumerate}[i)]
  \item $(x,y) \mapsto  F(x,y,0)$ has linear growth in $y$, 
%$F$ has linear growth in $y$ uniformly in $x$,
  \item
    $\left(F\left(x,y_1,z\right)-F\left(x,y_2,z\right)\right)\left(y_1-y_2\right)\geq
    a\left(y_1-y_2\right)^2$ for some $a$,
  \item $F$ is globally Lipschitz in $z$ with constant $b$.
    % with respect to $(x)$,
  \item $\gamma=b^2-2a\leq 0$.\label{C594}
  \end{enumerate}

  Then the solution $(Y,Z,O)$ provided by Corollary~\ref{C58a} is unique in
  the class of
  \begin{equation}
    \label{E59}
    E\left(\int_0^\tau e^{\gamma\left<M\right>_s}Y^2_sd\left<M\right>_s+\int_0^\tau e^{\gamma\left<M\right>_s}Z^2_sd\left<M\right>_s+\int_0^\tau e^{\gamma\left<M\right>_s}d\left<O\right>_s\right)<\infty.
  \end{equation}
\end{cor}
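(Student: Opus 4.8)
The plan is to read the statement as an existence part, supplied by Corollary~\ref{C58a}, and a uniqueness part, supplied by Theorem~\ref{thu}; the work is then to check that the hypotheses of each are met and that the solution furnished by Corollary~\ref{C58a} lies in the uniqueness class \eqref{E59}. First I would note that $\gamma = b^2 - 2a \le 0$ forces $a \ge \tfrac12 b^2 \ge 0$, so assumption~ii) gives $(F(x,y_1,z) - F(x,y_2,z))(y_1 - y_2) \ge a(y_1-y_2)^2 \ge 0$, i.e.\ the monotonicity condition \eqref{mono}. Together with the linear growth of $y \mapsto F(x,y,0)$ and the global Lipschitz property in $z$, this is precisely assumption~a) of Corollary~\ref{C58a}; hence there is a solution $(Y,Z,O)$ of the BSDE $(f,\tau,\xi)$ with $f,\tau,\xi$ as in \eqref{eq:7}, \eqref{EETau}, \eqref{EXi}, and by Remark~\ref{RC58a} and Theorem~\ref{PB4} it is given explicitly by $Y_t = u(X^\tau_t)$, $Z_t = u'(X_t)\1_{[0,\tau]}(t)$, $O \equiv 0$, where $u$ is the $C^1$-solution of \eqref{PDE}.

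Next I would check that $(Y,Z,O)$ belongs to \eqref{E59}. Since $u \in C^1([0,1])$ and $[0,1]$ is compact, $Y$ and $Z$ are bounded; $O \equiv 0$, so $\langle O\rangle \equiv 0$; and $\langle M\rangle_t = \int_0^{t\wedge\tau}\sigma^2(X_s)\,ds$, whence $\langle M\rangle_\tau \le (\sup_{[0,1]}\sigma^2)\,\tau$ with $E(\tau) < \infty$ by Proposition~\ref{P200}. As $\gamma \le 0$, $e^{\gamma\langle M\rangle_s}\le 1$, so each of the three terms in \eqref{E59} is dominated by a constant times $E(\langle M\rangle_\tau) < \infty$; combined with $E(\xi^2)<\infty$ this also yields the stronger integrability \eqref{sb}.

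The core is the application of Theorem~\ref{thu}. Extending $F$ to $\R^3$ as in \eqref{bv10} preserves monotonicity, the Lipschitz property in $z$ and the linear growth; using $f$ as in \eqref{eq:7} and noting that $d\langle M\rangle_s$ puts no mass on $(\tau,\infty)$, one may replace $f(\omega,s,\cdot,\cdot)$ by $f(\omega,s\wedge\tau,\cdot,\cdot)$ without changing the BSDE, so that only values $X_{s\wedge\tau}\in[0,1]$ occur and $\sigma^2$ is bounded there between two positive constants. Dividing the hypotheses on $F$ by $\sigma^2$ gives hypotheses~\ref{as1} and~\ref{as2} (with monotonicity and Lipschitz constants that are fixed multiples of $a$ and $b$), hypothesis~\ref{as3} with $\kappa' = 1$ (from the linear growth of $F(x,\cdot,0)$ and the Lipschitz bound in $z$), and the fourth hypothesis since $f(\cdot,0,0) = -F(X_\cdot,0,0)/\sigma^2(X_\cdot)$ is bounded, $E(\langle M\rangle_\tau)<\infty$, and $\gamma\le 0$ keeps the weights $\le 1$. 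Theorem~\ref{thu} then yields uniqueness within the class \eqref{sb}; since any solution satisfying \eqref{E59} also satisfies \eqref{sb} (using $\gamma\le0$ and deducing $E(Y_0^2)<\infty$ from the BSDE relation at $t=0$ together with Definition~\ref{D1}~\ref{D1ii}, $E(\xi^2)<\infty$ and $E(\langle M\rangle_\tau)<\infty$), and since the Corollary~\ref{C58a} solution lies in \eqref{sb}, it is the unique element of \eqref{E59}.

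The step I expect to be the main obstacle is this last reconciliation of weighted classes. Passing from $F$ to $f = -F/\sigma^2$ multiplies $a$ and $b$ by $\inf_{[0,1]}\sigma^{-2}$ and $\sup_{[0,1]}\sigma^{-2}$, so the exponent that Theorem~\ref{thu} naturally attaches to $f$ need not equal $b^2 - 2a$; one has to verify carefully that $\gamma = b^2-2a\le 0$ still forces \eqref{E59} to sit inside the class on which Theorem~\ref{thu} grants uniqueness (this is transparent when $\sigma^2\equiv 1$, where the two exponents agree). Proving $E(Y_0^2)<\infty$ for an arbitrary candidate solution in \eqref{E59} is the other point that requires some care.
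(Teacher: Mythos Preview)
Your approach coincides with the paper's: its proof is literally the single sentence ``Corollary~\ref{C59} follows from Corollary~\ref{C58a} and Theorem~\ref{thu}'', and Remark~\ref{R512} supplies exactly the details you give (assumption~iv) forces $a>0$ hence \eqref{mono}; assumptions~i)--iii) then yield hypothesis~a) of Corollary~\ref{C58a}; the produced solution lies in \eqref{E59} because $\gamma\le0$, $u,u'$ are bounded, $O\equiv0$ and $E(\tau)<\infty$).

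You go further than the paper in flagging, in your last paragraph, the mismatch between the constants for $F$ and those Theorem~\ref{thu} actually requires for $f=-F/\sigma^2(X)$. This is a legitimate point that the paper does not address: the tightest uniform constants for $f$ are $a/\sup_{[0,1]}\sigma^2$ and $b/\inf_{[0,1]}\sigma^2$, so the exponent Theorem~\ref{thu} delivers for the uniqueness class is not literally $\gamma=b^2-2a$ unless $\sigma$ is constant, and in general it need not be $\le\gamma$. A fully rigorous passage therefore needs either a mild adaptation of the proof of Theorem~\ref{thu} (using $f\,d\langle M\rangle=-F\,ds$ so that the $\sigma^2$ cancels in the drift estimate \eqref{A3}, at the cost of carrying a $\sigma^2$ in the $\theta$-term) or an additional check that the class Theorem~\ref{thu} controls contains \eqref{E59}. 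Your diagnosis of this as the main obstacle is correct; apart from it, your sketch is precisely what the paper intends.
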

\begin{rem}\label{R512}
  \
  \begin{enumerate}[a)]
  \item Condition iv) implies that $a > 0$. In particular $F$ is increasing in
    $y$.
  \item The validity of hypotheses i), ii), iii) imply Hypothesis a) in
    Corollary~\ref{C58a}.
  \item The solution provided by Corollary~\ref{C58a} fulfills \eqref{E59}
    since $\gamma\le0$, $u$, $u'$ are bounded and $O \equiv 0$,
taking into account that $E(\tau) < \infty$, by Proposition \ref{P200}.

    \end{enumerate}
\end{rem}
\begin{rem} \label{R60} We discuss an example related to the case $\gamma$
  strictly positive, i.e.\ when Assumption~\ref{C594} of Corollary \ref{C59}
is not fulfilled.
   Consider $F(x,y,z)=-\pi^2y$.
  \begin{enumerate}
  \item The PDE
    \begin{equation}
      \label{E61}
      \begin{cases}
        u''(x)=F\left(x,u(x),u'(x)\right)\\
        u(0)=u(1)=0,
      \end{cases}
    \end{equation}
    is not well-posed, since $u(x)=\eta \sin(\pi x), \eta \in \R$, provide
    a class of solutions of \eqref{E61}, and so Theorem~\ref{PB4} provides a
    family of solutions of BSDE $(f,\tau,\xi)$, $\xi\equiv 0$, $f, \tau, \xi$
    being defined in \eqref{eq:7}, \eqref{EETau}, \eqref{EXi}, when $X_t=
    \frac{1}{2} + W_t$ and $W$ is a standard Brownian motion. In particular
    $X$ solves $\MP\left(\sigma,\beta;x_0\right)$ in the sense of Definition
\ref{DMartPr} 
%it solves \eqref{SDE} 
with $x_0 = \frac{1}{2}$, $\sigma = 1, \beta =    0$.  We remark that
    $a=-\pi^2, b = 0$, so $\gamma = b^2-2a = 2 \pi^2 > 0$. \\
    %LUKAS. BITTE VERIFIZIEREN OB KANN MAN NICHT AUCH DEN KOROLLAR \ref{C58a}
    %ANWENDEN, ANSTATT DAS THEOREM.

  \item In that case, since the mentioned
    Assumption~\ref{C594} is not fulfilled, then of course Corollary~\ref{C59}
    cannot be applied.  Indeed, if 
$\eta \neq 0$,
 the solutions,  
 provided explicitly above are
    not in the class of solutions fulfilling \eqref{E59}, 
 as we show below.

    Let $\tau$ be the exit time of Brownian motion $X$ starting from $x_0 =
    \frac{1}{2}$ from interval $[0,1]$. 
By Proposition \ref{PC1} in appendix~\ref{moments} we have
  $E\left(\exp(
    \gamma \tau)\right) = \frac{1}{\cos(\sqrt{\frac{\gamma}{ 2}})}$ if $ 0 \le \gamma < \frac{\pi^2}{2}$, and
 $E\left(\exp( \gamma \tau)\right) =
    \infty$ whenever $\gamma \ge  \frac{\pi^2}{2}$.
    Consequently if $\gamma = 2 \pi^2$ as before, then $E\left(\exp( \gamma \tau)\right)
    = \infty$.

    We have $\langle M \rangle_t \equiv t, O \equiv 0$.  So,  \eqref{E59} gives
\begin{multline*}
 E\left( \int_0^\tau ds e^{\gamma s} \eta^2 \left(\sin^2\left(\pi X_s\right)
  + \pi^2 \cos^2\left(\pi X_s\right)\right)\right) \\
  \ge E \left(\int_0^\tau e ^{\gamma s}
  \eta^2 ds\right) = \frac{\eta^2}{\gamma} E\left(e^{\gamma \tau}\right) = \infty,
\end{multline*}
since $\eta \neq 0$. 
 In conclusion, if
$\gamma >0$, the solutions provided by Corollary~\ref{C58a} may  fulfill
or not \eqref{E59}.
\item On the other hand the solutions above fulfill the version of \eqref{E59}
  with $\gamma= 0$, i.e.
  \begin{equation} \label{s0} E\left( \int_0^\tau (Y^2_t + Z_t^2) d \left< M
      \right>_t +d\left<O\right>_t\right)<\infty,
  \end{equation}
  since $\tau$ has finite expectation and $u, u'$ are bounded. In particular
  the class of solutions fulfilling only \eqref{s0} is not, in general, a good
  class for uniqueness.
\end{enumerate}
\end{rem}
\section*{Appendix}
\appendix
\section{Proof of Lemma~\ref{L1}}
\label{ProofLemma}
Since $Y$ solves the BSDE, by integration by parts we get
\begin{multline}
  \label{20}
  e^{\frac{\theta}{2}\left<M\right>_{t\wedge\tau}}Y_{t\wedge\tau}=Y_0+\int_0^{t\wedge\tau}e^{\frac{\theta}{2}\left<M\right>_s}\left(Z_sdM_s+dO_s\right)\\
  -\int_0^{t\wedge\tau}e^{\frac{\theta}{2}\left<M\right>_s}f\left(s,Y_s,Z_s\right)d\left<M\right>_s+\frac{\theta}{2}\int_0^{t\wedge\tau}e^{\frac{\theta}{2}\left<M\right>_s}Y_sd\left<M\right>_s.
\end{multline}
By Assumption~\ref{mc} of Section~\ref{S51}, $\left<M\right>$ is
continuous. Consequently,
\begin{equation}
  \label{21}
  \left[e^{\frac{\theta}{2}\left<M\right>}Y\right]_{t\wedge\tau}=\left[N^\theta\right]_{t\wedge\tau},
\end{equation}
where
\begin{equation} \label{E21}
  N_t^\theta:=\int_0^te^{\frac{\theta}{2}\left<M\right>_s}\left(Z_sdM_s+dO_s\right).
\end{equation}
\begin{rem}
  \label{R1}
  From \eqref{ic} it follows that
  \begin{equation*}
    E\left(\int_0^\tau e^{\theta\left<M\right>_s}\left(Z_s^2d\left<M\right>_s+d\left<O\right>_s\right)\right)<\infty.
  \end{equation*}
  Consequently, $N^\theta$ is a square integrable martingale. So, by the proof
  of Proposition 4.50 in \cite{jacod}, there is a uniformly integrable
  martingale $\shm^\theta$, so that
  \begin{equation*}
    \left[N^\theta\right]=\left<N^\theta\right>+\shm^\theta.
  \end{equation*}
\end{rem}
We continue with the proof of Lemma~\ref{L1} by using It\^{o}'s formula and
\eqref{20} getting
  \begin{align}
    e^{\theta\left<M\right>_{t\wedge\tau}}Y^2_{t\wedge\tau}-Y_0^2=&\left(e^{\frac{\theta}{2}
        \left<M\right>_{t\wedge\tau}}Y_{t\wedge\tau}\right)^2-Y_0^2\nonumber\\
    =\ &2\int_0^{t\wedge\tau}
    e^{\frac{\theta}{2}\left<M\right>_{s}}Y_{s-}d\left(e^
      {\frac{\theta}{2}\left<M\right>_{s}}Y_{s}\right)+\left[e^{\frac{\theta}{2}\left<M\right>}Y\right]_{t\wedge\tau}\nonumber\\
    =\ &2\int_0^{t\wedge\tau}e^{\theta\left<M\right>_s}Y_{s-}\left(Z_sdM_s+dO_s\right)\nonumber\\
    &-2\int_0^{t\wedge\tau}e^{\theta\left<M\right>_s}Y_s f\left(s,Y_s,Z_s\right)d\left<M\right>_s\nonumber\\
    &+2\frac{\theta}{2}\int_0^{t\wedge\tau}e^{\theta\left<M\right>_s}
    Y_s^2d\left<M\right>_s+ \left[N^\theta\right]_{t\wedge\tau},\label{E510}
  \end{align}
where in the latter equality we have taken into account \eqref{21}.  Since
$\left< M \right>$ is continuous we have been allowed to replace $Y_{s-}$ with
$Y_s$ in the two lines above.  By use of Cauchy-Schwarz, the inequality $ 2
\alpha \beta \le \alpha^2 + \beta^2$ and assumption~\ref{as3} of
Theorem~\ref{thu}, there is a constant $c$, depending on $\kappa$, $b$ and
$\theta$, such that
\begin{multline}
  \label{12}
  e^{\theta\left<M\right>_{t\wedge\tau}}Y^2_{t\wedge\tau}-Y^2_0 \leq
  c\int_0^{t\wedge\tau}e^{\theta\left<M\right>_s}\left( Y^2_{s}+Z_s^2\right.
  \\+\left.f^2(s,0,0)+\kappa'\right)d\left<M\right>_s+2\int_0^{t\wedge\tau}e^{\frac{\theta}{2}\left<M\right>_s}Y_{s-}dN_s^\theta+\left[N^\theta\right]_{t\wedge\tau}.
\end{multline}
Now we continue with a localization of \eqref{12}. For that we define for each
$n\in\N$ a stopping time $\tau(n)$ by
\begin{align*}
  \tau(n)&:=\inf\left\{t\vert Y_t\geq n\right\}\wedge n.
\end{align*}
Replacing $t$ with $t\wedge\tau(n)$ in \eqref{12} gives
\begin{multline}
  \label{13}
  e^{\theta\left<M\right>_{t\wedge\tau(n)\wedge\tau}}Y^2_{t\wedge\tau(n)\wedge\tau}-Y^2_0\leq
  c\int_0^{t\wedge\tau(n)\wedge\tau}e^{\theta\left<M\right>_s}\left(
    Y^2_s+Z_s^2\right.\\
  +\left.f^2(s,0,0)+\kappa'\right)d\left<M\right>_s+2\int_0^{t\wedge\tau(n)\wedge\tau}e^{\frac{\theta}{2}\left<M\right>_s}Y_{s-}dN_s^\theta+\left[N^\theta\right]_{t\wedge\tau(n)\wedge\tau}.
\end{multline}
% LUKAS BITTE HIER (A) EINSCHREIBEN AB HIER HALTE VERSION By Gronwall lemma
% (REFERENZ), there is a universal constant $c_1>0$, such that
% \begin{multline}
%   \sup_{t\leq\tau(n)\wedge\tau}e^{\theta\left<M\right>_t}Y^2_t\leq
%   c_1\left\{Y_0^2 +\int_0^{\tau(n)\wedge\tau}e^{\theta\left<M\right>_s}
%     \left(f^2(s,0,0)+Z^2_s\right)d\left<M\right>_s\right.+\\
%   \left.+\sup_{t\geq 0}\left(2\int_0^{t\wedge\tau(n)\wedge\tau}Y_{s-}
%       e^{\frac{\theta}{2}\left<M\right>_s}dN_s^\theta\right)+\left[N^\theta\right]_{\tau(n)\wedge\tau}\right\}.
% \end{multline}
% Taking the expectation gives
% \begin{multline}
%   \label{14}
%   E\left(\sup_{t\leq\tau(n)\wedge\tau} e^{\theta\left<M\right>_t}Y_t^2
%   \right)\leq c_1 Y_0^2\left(1+c_2+E\left(\int_0^{\tau(n)\wedge\tau}
%       e^{\theta\left<M\right>_s}Z_s^2d\left<M\right>_s\right)\right.+\\
%   c_1 \left.+E\left(\sup_{t\geq
%       0}\left|\int_0^{t\wedge\tau(n)\wedge\tau}Y_{s-}e^{\frac{\theta}{2}\left<M\right>_s}dN_s^\theta\right|\right)+
%     c_1 E\left(\left[N^\theta\right]_{\tau(n)\wedge\tau}\right)\right),
% \end{multline}
% where
% \begin{equation*}
%   c_2=E\left(\int_0^\tau
%     e^{\theta\left<M\right>_s}f^2(s,0,0)d\left<M\right>_s\right).
% \end{equation*}
We take the supremum over $t$ on the left-hand side and afterwards the
expectation.  Recalling that
\begin{equation} \label{E31ter} N_t=\int_0^t
  e^{\frac{\theta}{2}\left<M\right>_s}Y_{s-}dN_s^\theta,
\end{equation}
this yields
\begin{multline}
  \label{E15}
  E\left(\sup_{t\leq\tau(n)\wedge\tau}\left(e^{\theta\left<M\right>_{t\wedge\tau(n)\wedge\tau}}Y^2_{t\wedge\tau(n)\wedge\tau}\right)\right)\\
  \leq E\left(Y_0^2\right)+c\,E\left(\shd\right)+2E\left(\sup_{t\geq
      0}\left|N^{\tau(n)\wedge\tau}_t\right|\right)+E\left(\left[N^\theta\right]_{\tau(n)\wedge\tau}\right),
  % E\left(\sup_{t\geq
  %   0}\left|2\int_0^{t\wedge\tau\wedge\tau(n)}Y_se^{\frac{\theta}{2}\left<M\right>_s}dN^\theta_s\right|\right),
\end{multline}
where
\begin{equation} \label{E15bis} \shd=\int_0^\tau
  e^{\theta\left<M\right>_s}\left(Y_s^2+Z_s^2+f^2(s,0,0)+\kappa'\right)d\left<M\right>_s,
\end{equation}
which has finite expectation because of \eqref{ic}.  By Remark~\ref{R1},
  \begin{align}
    E\left(\left[N^\theta\right]_{\tau(n)\wedge\tau}\right)&=
    E\left(\left<N^\theta\right>_{\tau(n)\wedge\tau}\right)\nonumber\\
    &=E\left(\int_0^{\tau(n)\wedge\tau}e^{\theta\left<M\right>_s}
      \left(Z_s^2d\left<M\right>_s+d\left<O\right>_s\right)\right).\label{31}
  \end{align}
We show now that $ N^{\tau(n)\wedge\tau}$ is a square integrable martingale.
This happens because by \eqref{E2} we have
% In view of using this in \eqref{E15}, we need now to evaluate $\left[\tilde
%   N^\theta\right]_{\tau(n)\wedge\tau}$, $\left[ N
% \right]_{\tau(n)\wedge\tau}$, since
% \begin{equation} \label{E31ter} N_t=\int_0^t
%   Y_se^{\frac{\theta}{2}\left<M\right>_s}dN_s^\theta.
% \end{equation}
% $\tilde N^{\tau(n)\wedge\tau}$ $ N^{\tau(n)\wedge\tau}$ is a square
% integrable martingale, because
  \begin{align*}
    % E&\left(\int_0^{\tau(n)\wedge\tau}Y^2_se^{\theta\left<M\right>_s}
    %   d\left<N^\theta\right>_s\right)\\
    E \left( \left< N \right>_{\tau(n)\wedge\tau} \right)
    &=E\left(\int_0^{\tau(n)\wedge\tau}e^{2\theta\left<M\right>_s}Y^2_s
      \left(Z_s^2d\left<M\right>_s+
        d\left<O\right>_s\right)\right)\\
    &\leq n^2E\left(\int_0^{\tau\wedge
        n}e^{2\theta\left<M\right>_s}\left(Z_s^2d\left<M\right>_s+
        d\left<O\right>_s\right)\right)\\
    &\leq n^2e^{\theta \rho(n)}E\left(\int_0^\tau
      e^{\theta\left<M\right>_s}\left(Z^2_sd\left<M\right>_s+d\left<O\right>_s\right)\right)<\infty,
  \end{align*}
taking into account Assumption~\ref{mc} at the beginning of
Section~\ref{S51}. So by Proposition 4.50 of \cite{jacod}, there is a
uniformly integrable martingale $\tilde\shm$ so that
\begin{equation*}
  \left[ N^{\tau(n) \wedge \tau }\right]=\left< N^{\tau(n) \wedge \tau}
  \right>+\tilde\shm.
\end{equation*}
% So $\tilde N^{\tau(n)\wedge\tau}$ is a uniformly integrable martingale.
Due to the Burkholder-Davis-Gundy (BDG) inequalities (see e.\,g. \cite[Theorem
IV.48]{protter}), there is a constant $c_0$ such that
\begin{equation} \label{E30} E\left(\sup_{t\geq 0}\left\vert
      N_t^{\tau(n)\wedge\tau}\right\vert\right) \leq c_0 E\left(\left[ N,
      N\right]^\frac{1}{2}_{\tau(n)\wedge\tau}\right).
\end{equation}
We denote by $\shn$ the local martingale
\begin{equation*}
  \shn_t = \int_0^t Z_s dM_s + O_t.
\end{equation*}
By Theorem 29 in Chapter II of \cite{protter} the right-hand side of
\eqref{E30} equals
\begin{multline} \label{E30bis} c_0
  E\left(\left(\int_0^{\tau(n)\wedge\tau}e^{\theta\left<M\right>_s}
      e^{\theta\left<M\right>_s}Y^2_s
      % \left(Z_s^2d\left[M\right]_s+ d\left[O\right]_s\right)
      d\left[\shn\right]_s
    \right)^\frac{1}{2}\right)\\
  \leq c_0 E\left(\left(\sup_{t\leq\tau(n)\wedge\tau}
      \left(e^{\theta\left<M\right>_t}Y_t^2\right)\right)^\frac{1}{2}
    \left(\int_0^{\tau(n)\wedge\tau}e^{\theta\left<M\right>_s}
      % \left(Z_s^2d[M]_s+d[O]_s\right)\right)^\frac{1}{2}\right).
      d\left[\shn\right]_s \right)^\frac{1}{2}\right).
\end{multline}
By $2 \alpha \beta \leq\frac{\alpha^2}{c_3}+c_3\beta^2$, for any $c_3>0$, the
right-hand side of \eqref{E30bis} is bounded by
\begin{multline}
  \frac{c_0}{2c_3}E\left(\sup_{t\leq\tau(n)\wedge\tau}\left(e^{\theta\left<M\right>_t}Y_t^2
    \right)\right)+ \frac{c_0
    c_3}{2}E\left(\int_0^{\tau(n)\wedge\tau}e^{\theta\left<M\right>_s}
    % \left(Z_s^2d[M]_s+d[O]_s\right)
    d[\shn]_s
  \right)\\
  =\frac{c_0}{2c_3}E\left(\sup_{t\leq\tau(n)\wedge\tau}\left(e^{\theta\left<M\right>_t}Y_t^2
    \right)\right)+\frac{c_0
    c_3}{2}E\left(\left[N^\theta\right]_{\tau(n)\wedge\tau}\right),
\end{multline}
also using \eqref{E21} and \cite{protter}, Theorem 29, Chapter II\@.  This
gives, by \eqref{E30}, \eqref{31} and \eqref{E30bis},
% and \eqref{E31ter},
\begin{multline}
  \label{E31}
  E\left(\sup_{t\geq 0}\left| N_t^{\tau(n)\wedge\tau}\right|\right)\leq\frac{c_0}{2c_3}E\left(\sup_{t\leq\tau(n)\wedge\tau}\left(e^{\theta\left<M\right>_t}Y_t^2\right)\right)\\
  +\frac{c_0
    c_3}{2}E\left(\int_0^{\tau(n)\wedge\tau}e^{\theta\left<M\right>_s}\left(Z_s^2d\left<M\right>_s+d\left<O\right>_s\right)\right).
\end{multline}
Plugging \eqref{31} and \eqref{E31} in \eqref{E15} gives
\begin{multline*}
  E\left(\sup_{t\leq\tau(n)\wedge\tau}\left(e^{\theta\left<M\right>_t}Y_t^2\right)\right)\\
  \leq
  E\left(Y_0^2\right)+E\left(\shd\right)\left(1+c+c_0c_3\right)+\frac{c_0}{c_3}E\left(\sup_{t\leq\tau(n)\wedge\tau}\left(e^{\theta\left<M\right>_t}Y_t^2\right)\right).
\end{multline*}
Choosing $c_3=2c_0$, we get
\begin{equation*}
  E\left(\sup_{t\leq\tau(n)\wedge\tau}\left(e^{\theta\left<M\right>_t}Y^2_t\right)\right)\leq 2E\left(Y_0^2\right)+2E\left(\shd\right)\left(1+c+2c_0^2\right).
\end{equation*}
By the monotone convergence theorem, letting $n\to\infty$, we get
\begin{equation*}
  E\left(\sup_{t\leq\tau}\left(e^{\theta\left<M\right>_t}Y_t^2\right)\right)\leq 
  2E\left(Y_0^2\right) + 2 E(\mathcal{D})\left(1+c+2c_0^2\right),
  % c_4 \left(1+E\left(\int_0^\tau
  %     e^{\theta\left<M\right>_s}\left(Z^2_sd\left<M\right>_s+d\left<O\right>_s
  %     \right)\right)\right).
\end{equation*}
which shows \eqref{E1}.

We go on with the second part, i.\,e.\ the fact that $N$ defined in \eqref{E2}
is a uniformly integrable martingale. By BDG and Cauchy-Schwarz inequalities,
\begin{multline}
  E\left(\sup_{t\geq 0}\left|N_t\right|\right)\leq c_0
  E\left(\left[N,N\right]^\frac{1}{2}\right) \leq c_0
  E\left(\left(\int_0^\cdot e^{2\theta\left<M\right>_s}Y^2_{s-}
      % \left(Z_s^2d[M]_s+d[O]_s\right)
      d[\shn]_s
    \right)^\frac{1}{2}\right)\\
  \leq c_0
  E\left(\left(\sup_{t\leq\tau}e^{\theta\left<M\right>_t}Y_t^2\right)^\frac{1}{2}\left(\int_0^\tau
      e^{\theta\left<M\right>_s}
      % \left(Z^2_sd[M]_s+d[O]_s\right)
      d[\shn]_s
    \right)^\frac{1}{2}\right)\\
  \leq c_0 \left(E\left(\sup_{t\leq\tau}e^{\theta\left<M\right>_t}Y^2_t\right)
  \right)^\frac{1}{2}\left(E\left(\int_0^\tau e^{\theta\left<M\right>_s}
      % \left(Z^2_sd[M]_s+d[O]_s\right)
      d[\shn]_s
    \right)\right)^\frac{1}{2}\\
  = c_0
  \left(E\left(\sup_{t\leq\tau}e^{\theta\left<M\right>_t}Y^2_t\right)\right)
  ^\frac{1}{2}\left(E\left(\left[N^\theta\right]_\tau\right)\right)^\frac{1}{2}.
\end{multline}

By Remark~\ref{R1}
\begin{equation*}
  E\left(\left[N^\theta\right]_\tau\right)=E\left(\left<N^\theta\right>
    _\tau\right)=E\left(\int_0^\tau e^{\theta\left<M\right>_s}\left(Z^2_sd\left<M\right>_s+d\left<O\right>_s
    \right)\right)<\infty.
\end{equation*}
This shows that $N$ is a uniformly integrable martingale and finally,
Lemma~\ref{L1} is established.
\section{Proof of Theorem~\ref{thu}}
\label{ProofTheorem}
We start with some a priori bounds.  Let $\theta < \gamma$.  By
assumptions~\ref{as1} and~\ref{as2}, for any $\varepsilon\geq 0$, using
$2\alpha\beta\leq\frac{\alpha^2}{1+ \varepsilon}+ (1+\varepsilon) \beta^2$, we
can easily show that
\begin{equation}
  \label{A1}
  2\left(y-\bar y\right)\left(f(s,y,z)-f\left(s,\bar y,\bar z\right)\right)\leq -2a\left|y-\bar y\right|^2+b^2(1+\varepsilon)\left|y-\bar y\right|^2+\frac{\left|z-\bar z\right|^2}{1+\varepsilon}.
\end{equation}   
Let $\left(Y^i,Z^i,O^i\right),\,i=1,2$ be two solutions fulfilling
% \ref{sa}) and
~\eqref{sb} of the statement. By similar arguments as \eqref{E510} and in the
lines before,
% in the proof of Lemma~\ref{L1},
for $Y=Y^1-Y^2$, $Z=Z^1-Z^2$, $O=O^1-O^2$ we have
\begin{multline}
  \label{A2}
  e^{\theta\left<M\right>_\tau}Y^2_\tau-e^{\theta\left<M\right>_{t\wedge\tau}}Y^2_{t\wedge\tau}=\int_{t\wedge\tau}^\tau\theta e^{\theta\left<M\right>_s}Y^2_sd\left<M\right>_s+2\int_{t\wedge\tau}^\tau e^{\frac{\theta}{2}\left<M\right>_s}Y_{s-}dN^\theta_s\\
  -2\int_{t\wedge\tau}^\tau
  e^{\theta\left<M\right>_s}Y_s\left(f\left(s,Y^1_s,Z^1_s\right)-f\left(s,Y^2_s,Z^2_s\right)\right)d\left<M\right>_s+\left[N^\theta\right]_{\tau}-
  \left[N^\theta\right]_{t \wedge \tau},
\end{multline}
where $N^\theta$ was defined in \eqref{E21}.
% \begin{equation}
%   N^\theta_t=\int_0^{t\wedge\tau}e^{\frac{\theta}{2}\left<M\right>_s}\left(Z_sdM_s+dO_s\right).
% \end{equation}
By \eqref{A1} we get
\begin{multline}
  \label{A3}
  2\int_{t\wedge\tau}^\tau e^{\theta\left<M\right>_s}Y_s\left(f\left(s,Y^1_s,Z^1_s\right)-f\left(s,Y^2_s,Z_s^2\right)\right)d\left<M\right>_s\\
  \leq\int_{t\wedge\tau}^\tau
  e^{\theta\left<M\right>_s}\left(b^2(1+\varepsilon)-2a\right)Y^2_sd\left<M\right>_s+\int_{t\wedge\tau}^\tau
  e^{\theta\left<M\right>_s}\frac{\left|Z_s\right|^2}{1+\varepsilon}d\left<M\right>_s.
\end{multline}
$(Y^i, Z^i, O^i), i =1,2$ fulfills \eqref{ic} by \eqref{sb} and Assumption iv)
of Theorem~\ref{thu}.  Consequently $(Y,Z,O)$ also fulfills \eqref{ic}.  By
Remark~\ref{R1}, since $\theta <\gamma$, $N^\theta$ is a square integrable
martingale and
\begin{equation*}
  \left[N^\theta\right]=\left<N^\theta\right>+\shm^\theta,
\end{equation*}
where $\shm^\theta$ is a uniformly integrable martingale. So
\begin{equation}
  \label{A4}
  E\left(\left[N^\theta\right]_\tau-\left[N^\theta\right]_{t \wedge \tau} 
  \right)=E\left(\int_{t\wedge\tau}^\tau e^{\theta\left<M\right>_s}\left(Z^2_sd\left<M\right>_s+d\left<O\right>_s\right)\right).
\end{equation}
\eqref{A2}, \eqref{A3} and the fact that $Y_\tau=0$, gives
\begin{multline}
  \label{A5}
  e^{\theta\left<M\right>_{t\wedge\tau}}
  Y^2_{t\wedge\tau}+\left[N^\theta\right]_\tau
  - \left[N^\theta\right]_{t\wedge\tau}+2\int_{t\wedge\tau}^\tau e^{\frac{\theta}{2}\left<M\right>_s}Y_{s-}dN^\theta_s\\
  \leq\int_{t\wedge\tau}^\tau
  e^{\theta\left<M\right>_s}\left(b^2(1+\varepsilon)-2a-\theta\right)Y^2_sd\left<M\right>_s+\int_{t\wedge\tau}^\tau
  e^{\theta\left<M\right>_s}\frac{Z_s^2}{1+\varepsilon}d\left<M\right>_s.
\end{multline}
By Lemma~\ref{L1}, since $\theta < \gamma$,
\begin{equation*}
  \left(\int_0^{t\wedge\tau}e^{\frac{\theta}{2}\left<M\right>_s}Y_{s-}dN_s^\theta\right)_{t\geq 0}
\end{equation*}
is a uniformly integrable martingale. So its expectation is zero. By previous
considerations, \eqref{A3} and \eqref{A4}, we take the expectation in
\eqref{A5} to get
\begin{multline}
  \label{A6}
  E\left(e^{\theta\left<M\right>_{t\wedge\tau}}Y^2_{t\wedge\tau}+\int_{t\wedge\tau}^\tau e^{\theta\left<M\right>_s}\left(\frac{\varepsilon Z^2_s}{1+\varepsilon}d\left<M\right>_s+d\left<O\right>_s\right)\right)\\
  \leq E\left(\int_{t\wedge\tau}^\tau
    e^{\theta\left<M\right>_s}\left(b^2(1+\varepsilon)-2a-\theta\right)Y^2_sd\left<M\right>_s\right).
\end{multline}
Since $\theta <\gamma=b^2-2a$, we have
$b^2(1+\varepsilon)-2a-\theta>0,\,\forall\varepsilon\geq 0$. We let
$\varepsilon\to 0$ so that \eqref{A6} becomes
\begin{multline}
  \label{A7}
  E\left(e^{\theta\left<M\right>_{t\wedge\tau}}Y^2_{t\wedge\tau}+\int_{t\wedge\tau}^\tau e^{\theta\left<M\right>_s}d\left<O\right>_s\right)\\
  \leq E\left(\int_{t\wedge\tau}^\tau
    e^{\theta\left<M\right>_s}\left(b^2-2a-\theta\right)Y^2_sd\left<M\right>_s\right).
\end{multline}
Equation \eqref{A7} holds for every $\theta <\gamma$. We let
$\theta\to\gamma-$. By the monotone convergence theorem we get
\begin{equation}
  \label{A8}
  E\left(e^{\gamma\left<M\right>_{t\wedge\tau}}Y^2_{t\wedge\tau}+
    \int_{t\wedge\tau}^\tau e^{\gamma \left<M\right>_s}d\left<O\right>_s\right)\leq 0.
\end{equation}
Equation \eqref{A8} finally shows that $Y\equiv0$ and $\left<O\right>\equiv
0$. Coming back to \eqref{A6}, it easily follows that $Z\equiv 0$
$d\left<M\right>$ a.\,s.
\section{Exponential moments of the first exit time 
of Brownian motion}
\label{moments}
We consider a standard one-dimensional Brownian motion $\left\{W=W_t:t\geq
  0\right\}$, $x_0, a, b \in \R$ so that 
$x_0 \in ]a,b[$. We consider the 
  exit time
\begin{equation}
  \tau=\min\left\{s: x_0 + W_s\notin\left]a,b\right[\right\},
\end{equation}
from the interval $\left ]a,b\right[$.
 According to \cite[p.\,212]{borodin},
\begin{align}
  \label{laplace}
  E\left(e^{\gamma
    \tau}\right)&=\frac{\cosh\left(\left(b+a - 2x_0 \right)
\sqrt{-\frac{\gamma}{2}}\right)}{\cosh\left(\left(b-a\right)\sqrt{-\frac{\gamma}{2}}\right)},&\gamma\leq
  0.
\end{align}
%where $E_x$ is the expectation operator associated with a process started at
%$x$.
 We define the function $f$ by
\begin{align}
  f:\C&\to\C,\nonumber\\
  z&\mapsto\sum_{n=0}^\infty\left(-1\right)^n\frac{z^n}{\left(2n\right)!}.
\end{align}
Clearly $f$ is analytical. For $x\in\R$ we can verify by Taylor series expansion that
\begin{equation}
f(x)=\left\{\begin{aligned}
&\cos\sqrt{x},&x&\geq 0,\\
&\cosh\sqrt{-x},&x&<0.
\end{aligned}\right.
\end{equation}
Let $g$ be another analytical function, defined by
\begin{align}
  g:\C\setminus\left\{\frac{\pi^2\left(2k+1\right)^2}{2\left(b-a\right)^2},k\in\N\right\}&\to\C,\nonumber\\
  \gamma&\mapsto\frac{f\left(\frac{\left(b+a - 2x_0\right)^2}{2}\gamma\right)}{f\left(\frac{\left(b-a\right)^2}{2}\gamma\right)}.
\end{align}
In particular, for $\gamma\in\R$ and $\gamma\leq 0$, this gives
\begin{equation}
\label{C6}
g\left(\gamma\right)=\frac{\cosh\left(\left(b+a - 2x_0\right)\sqrt{-\frac{\gamma}{2}}\right)}{\cosh\left(\left(b-a\right)\sqrt{-\frac{\gamma}{2}}\right)}.
\end{equation}
\begin{prop} \label{PC1} Let $\gamma \ge 0$.
  \
  \begin{enumerate}[i)]
  \item \label{p1}$E\left(e^{\gamma \tau}\right)=g\left(\gamma\right)$,
    $\gamma<\frac{\pi^2}{2\left(b-a\right)^2}$,
  \item \label{p2}$E\left(e^{\gamma \tau}\right)=\infty$,
    $\gamma\geq \frac{\pi^2}{2\left(b-a\right)^2}$.
  \end{enumerate}
\end{prop}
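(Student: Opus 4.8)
\emph{Plan.} Write $\gamma^\ast:=\frac{\pi^2}{2(b-a)^2}$ and $\Lambda(\gamma):=E\bigl(e^{\gamma\tau}\bigr)\in(0,+\infty]$ for $\gamma\in\R$; note that $\Lambda$ is non-decreasing since $\tau\geq 0$. The argument proceeds in three steps: first an exponential tail estimate for $P(\tau>t)$ giving $\Lambda(\gamma)<\infty$ for $\gamma<\gamma^\ast$; then an analytic-continuation argument identifying $\Lambda$ with $g$ on $(-\infty,\gamma^\ast)$; and finally a blow-up argument as $\gamma\uparrow\gamma^\ast$ combined with monotonicity.

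\emph{Step 1 (finiteness).} For $\omega\in\bigl(0,\tfrac{\pi}{b-a}\bigr)$ I would use $\phi_\omega(x):=\cos\bigl(\omega(x-\tfrac{a+b}{2})\bigr)$, which solves $\tfrac12\phi_\omega''+\tfrac{\omega^2}{2}\phi_\omega=0$ on $\R$ and satisfies $\phi_\omega\geq c_\omega:=\cos\bigl(\tfrac{\omega(b-a)}{2}\bigr)>0$ on $[a,b]$. By It\^o's formula $M_t:=e^{\frac{\omega^2}{2}(t\wedge\tau)}\phi_\omega\bigl(x_0+W_{t\wedge\tau}\bigr)$ is a local martingale, and it is non-negative because $x_0+W_{t\wedge\tau}\in[a,b]$; hence it is a supermartingale and $E(M_t)\leq\phi_\omega(x_0)$. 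On $\{\tau>t\}$ one has $M_t\geq c_\omega e^{\frac{\omega^2}{2}t}$, so $P(\tau>t)\leq\frac{\phi_\omega(x_0)}{c_\omega}\,e^{-\frac{\omega^2}{2}t}$. Writing $E(e^{\gamma\tau})=1+\gamma\int_0^\infty e^{\gamma t}P(\tau>t)\,dt$ for $\gamma\geq 0$ (Tonelli), this yields $\Lambda(\gamma)<\infty$ whenever $\gamma<\tfrac{\omega^2}{2}$; letting $\omega\uparrow\tfrac{\pi}{b-a}$ gives $\Lambda(\gamma)<\infty$ for every $\gamma<\gamma^\ast$.

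\emph{Step 2 (identification).} Expanding $e^{\gamma\tau}$ around an interior point $\gamma_0<\gamma^\ast$ and applying Fubini (legitimate because $\Lambda(\gamma_0+r)<\infty$ for small $r>0$ by Step 1) shows that $\gamma\mapsto\Lambda(\gamma)$ is real-analytic on $(-\infty,\gamma^\ast)$. On the other hand $g$ is analytic on $\{\operatorname{Re}z<\gamma^\ast\}$, since by construction its only singularities are the points $\frac{\pi^2(2k+1)^2}{2(b-a)^2}$, the smallest of which is $\gamma^\ast$. By \eqref{laplace} and \eqref{C6}, $\Lambda(\gamma)=g(\gamma)$ for every $\gamma\leq 0$. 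Two real-analytic functions on the connected interval $(-\infty,\gamma^\ast)$ agreeing on $(-\infty,0]$ coincide, so $\Lambda(\gamma)=g(\gamma)$ for all $\gamma<\gamma^\ast$, which is part~\ref{p1}.

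\emph{Step 3 (blow-up and monotonicity).} As $\gamma\uparrow\gamma^\ast$ the denominator $f\bigl(\tfrac{(b-a)^2}{2}\gamma\bigr)=\cos\bigl((b-a)\sqrt{\gamma/2}\bigr)$ decreases to $\cos(\pi/2)=0$ through positive values, whereas the numerator $f\bigl(\tfrac{(b+a-2x_0)^2}{2}\gamma\bigr)$ converges to $\cos\bigl(|b+a-2x_0|\sqrt{\gamma^\ast/2}\bigr)>0$, strict positivity holding because $|b+a-2x_0|<b-a$ for $x_0\in{]a,b[}$. Hence $g(\gamma)\to+\infty$, so $\lim_{\gamma\uparrow\gamma^\ast}\Lambda(\gamma)=+\infty$, and by monotone convergence ($e^{\gamma\tau}\uparrow e^{\gamma^\ast\tau}$) we get $E(e^{\gamma^\ast\tau})=+\infty$; monotonicity of $\Lambda$ then gives $\Lambda(\gamma)=+\infty$ for all $\gamma\geq\gamma^\ast$, which is part~\ref{p2}. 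The only genuinely non-routine point is Step~1: one needs a strictly positive supersolution of the heat equation on $[a,b]$ whose exponential growth rate can be pushed arbitrarily close to $\gamma^\ast$, which is exactly what the truncated cosine $\phi_\omega$ provides; everything else is bookkeeping and the identity theorem.
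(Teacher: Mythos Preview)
Your proof is correct, but it takes a different route from the paper's argument, and it is worth noting the contrast.

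The paper argues more directly. For $\gamma<0$ one has $E(\tau^n e^{\gamma\tau})=g^{(n)}(\gamma)$ by differentiating the known Laplace transform, and letting $\gamma\uparrow 0$ (monotone convergence) gives $E(\tau^n)=g^{(n)}(0)$. Then for $0<\gamma<\gamma^\ast$, Tonelli (all terms non-negative) yields
\[
E\bigl(e^{\gamma\tau}\bigr)=\sum_{n\ge 0}\frac{\gamma^n}{n!}E(\tau^n)=\sum_{n\ge 0}\frac{\gamma^n}{n!}g^{(n)}(0)=g(\gamma),
\]
the last equality holding because $g$ is analytic with nearest singularity at $\gamma^\ast$, so its Taylor series at $0$ has radius of convergence $\gamma^\ast$. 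Finiteness of $E(e^{\gamma\tau})$ thus falls out of the Taylor identity itself; no a~priori bound is needed. Part~\ref{p2} is then obtained, as you do, from monotonicity and the blow-up of $g$ at $\gamma^\ast$ (stated more tersely in the paper).

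Your approach instead first \emph{earns} the finiteness $\Lambda(\gamma)<\infty$ for $\gamma<\gamma^\ast$ by a probabilistic supermartingale/tail estimate (your Step~1), and only then invokes the identity theorem for real-analytic functions on $(-\infty,\gamma^\ast)$. This is slightly longer but more self-contained: it does not rely on locating the complex zeros of $f$ to determine the radius of convergence of the Taylor series, and the supermartingale bound with the near-eigenfunction $\phi_\omega$ gives an independent, quantitative tail estimate $P(\tau>t)\le C_\omega e^{-\omega^2 t/2}$ that the paper's argument does not provide. Conversely, the paper's proof is shorter because Tonelli lets one write the series identity in $[0,\infty]$ without first knowing it is finite.
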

\begin{proof}
  ~\ref{p2} follows from~\ref{p1}, since $\gamma\mapsto E\left(e^{\gamma
    \tau}\right)$ is monotone, taking into account the Beppo-Levi convergence
  theorem. By~\eqref{laplace} and~\eqref{C6},~\ref{p1} holds for $\gamma\leq 0$, and it
  remains to show~\ref{p1} in the case
  $0<\gamma<\frac{\pi^2}{2\left(b-a\right)^2}$. For $\gamma<0$ and $n\in\N$,
  \begin{equation}
    E\left(\tau^n e^{\gamma \tau}\right)=\frac{d^n}{d\gamma^n}g\left(\gamma\right).
  \end{equation}
  Again, by the monotone convergence theorem, letting $\gamma\to 0-$, we get
  \begin{equation}
    E\left(\tau^n\right)=\left.\frac{d^n}{d\gamma^n}g\left(\gamma\right)\right|_{\gamma=0}.
  \end{equation}
  In particular all moments of $\tau$ exist. Now if
  $0<\gamma<\frac{\pi^2}{2\left(b-a\right)^2}$, then by Fubini we get
  \begin{multline}
   E\left(e^{\gamma \tau}\right)=\sum_{n=0}^\infty\frac{\gamma^n}{n!}E\left(\tau^n\right)=\sum_{n=0}^\infty\left.\frac{\gamma^n}{n!}\frac{d^n}{d\gamma^n}g\left(\gamma\right)\right|_{\gamma=0}=g\left(\gamma\right)\\
    =\frac{\cos\left(\left(b+a -2x_0\right)\sqrt{\frac{\gamma}{2}}\right)}{\cos\left(\left(b-a\right)\sqrt{\frac{\gamma}{2}}\right)},
  \end{multline}
  since $g$ is analytical on its domain. Finally~\ref{p1} follows.
\end{proof}
\bigskip

{\bfseries ACKNOWLEDGEMENTS:} The authors are grateful to the Referee for her / his
interesting comments and suggestions. \\
The research was partially supported by the ANR
Project MASTERIE 2010 BLAN-0121-01.  The first named author also benefited
partially from the support of the ``FMJH Program Gaspard Monge in optimization
and operation research'' (Project 2014-1607H).  The second named author was
supported by a ``Marietta-Blau-Stipendium'' coming from the Austrian federal
Ministry of science, research and economy (BMWF).

% \nocite{*}
\printbibliography
\end{document}